\numberwithin{equation}{section}
\numberwithin{figure}{section}
\theoremstyle{plain}
\newtheorem{thm}{\protect\theoremname}[section]
\theoremstyle{remark}
\newtheorem{rem}[thm]{\protect\remarkname}
\theoremstyle{plain}
\newtheorem{lem}[thm]{\protect\lemmaname}
\theoremstyle{plain}
\newtheorem{cor}[thm]{\protect\corollaryname}
\theoremstyle{definition}
\newtheorem{defn}[thm]{\protect\definitionname}
\providecommand{\corollaryname}{Corollary}
\providecommand{\definitionname}{Definition}
\providecommand{\lemmaname}{Lemma}
\providecommand{\remarkname}{Remark}
\providecommand{\theoremname}{Theorem}
\begin{document}
\title{Classical and weak solutions to local first-order mean field games
through elliptic regularity }
\author{Sebastian Muñoz}
\maketitle
\begin{abstract}
We study the regularity and well-posedness of the local, first-order
forward--backward mean field games system, assuming a polynomially
growing cost function and a Hamiltonian of quadratic growth. We consider
systems and terminal data that are strictly monotone in the density
and study two different regimes depending on whether there exists
a lower bound for the running cost function. The work relies on a
transformation due to P.-L. Lions, which gives rise to an elliptic
partial differential equation with oblique boundary conditions, that
is strictly elliptic when the coupling is unbounded from below. In
this case, we prove that the solution is smooth. When the problem
is degenerate elliptic, we obtain existence and uniqueness of weak
solutions analogous to those obtained by P. Cardaliaguet and P.J.
Graber for the case of a terminal condition that is independent of
the density. The weak solutions are shown to arise as viscous limits
of classical solutions to strictly elliptic problems.
\end{abstract}
\textit{\small{}MSC: 35Q89, 35B65, 35J66, 35J70. }{\small\par}

\noindent \textit{\small{}Keywords: quasilinear elliptic equations;
oblique derivative problems; Bernstein method; non-linear method of
continuity; Hamilton-Jacobi equations; vanishing viscosity limit.}{\small\par}

\global\long\def\xo{\bar{x}}%

\global\long\def\tr{\text{{Tr}}}%

\global\long\def\osc{\text{\text{{osc}}}}%

\global\long\def\utilde{\tilde{u}}%

\global\long\def\diag{\text{{diag}}}%

\global\long\def\weak{\overset{\ast}{\rightharpoonup}}%

\global\long\def\intQ{\int\int_{Q_{T}}}%

\global\long\def\intO{\int_{\mathbb{T}^{d}}}%

\global\long\def\QT{\overline{Q_{T}}}%

\tableofcontents{}

\section{Introduction\label{sec:Introduction}}

The purpose of this paper is to study the well-posedness of the first-order
mean field games system (MFG for short) with a local coupling:

\begin{equation}
\tag{MFG}\begin{cases}
-u_{t}+H(x,D_{x}u)=f(x,m(x,t)) & (x,t)\in Q_{T}=\mathbb{T}^{d}\times(0,T),\\[1pt]
m_{t}-\textrm{div}(mD_{p}H(x,D_{x}u))=0 & (x,t)\in Q_{T},\\[4pt]
m(x,0)=m_{0}(x),\;u(x,T)=g(x,m(x,T)) & x\in\mathbb{T}^{d},
\end{cases}\label{eq:mfg}
\end{equation}
\renewcommand*{\theHequation}{nota1.\theequation}where $H:\mathbb{T}^{d}\times\mathbb{R}^{d}\rightarrow\mathbb{R}$
is a strictly convex Hamiltonian of quadratic growth, $f,g:\mathbb{T}^{d}\times[0,\infty)\rightarrow[-\infty,\infty)$
are strictly increasing in their second variable $m$, $f$ has polynomial
growth in $m$, and $m_{0}$ is a strictly positive probability density.
As is standard, we work on the flat $d$-dimensional torus $\mathbb{T}^{d}=\text{\ensuremath{\mathbb{R}^{d}/\mathbb{Z}^{d}}}$
to avoid additional technicalities with spatial boundary conditions.

MFG were introduced by Lasry and Lions \cite{LasryLions,Lions}, and
at the same time, in a particular setting, by Huang, Malhamé, and
Caines \cite{caines}. They are non-cooperative differential games
with infinitely many players, in which the players find an optimal
strategy by observing the distribution of the others. When the game
is completely deterministic, such games are typically modeled by the
system (\ref{eq:mfg}), which has been successfully studied in the
case where the function $g$ is independent of the density $m$, and
a complete theory of weak solutions has been obtained through variational
methods by Cardaliaguet, Graber, Porretta, and Tonon \cite{Cardalaguiet,CardaliaguetGraber,CardaliaguetGraberPorrettaTonon,Graber}.

Our two main contributions are, proving well-posedness when the terminal
condition is strictly increasing with respect to $m$, and the attainment
of classical solutions under the additional assumption that $f(\cdot,0)\equiv-\infty$.
When the latter blowup condition does not hold, we obtain weak solutions
that are in line with the variational theory, and they are shown to
enjoy higher regularity than in the case $g_{m}\equiv0$, by virtue
of the strict monotonicity of $g$.

The precise statements of our main results are as follows. We refer
to Section \ref{sec:Assumptions-and-general} for the exact assumptions
\hyperref[eq:M1]{(M)}, \hyperref[eq:strconv]{(H)}, \hyperref[eq:f polynomial growth]{(F)},
\hyperref[eq: gx control]{(G)}, (\ref{eq:SE}), and (\ref{eq:DE}),
to Section \ref{sec:Weak-solutions} for the definition of a weak
solution, and to the \hyperref[subsec:Notation]{notation} subsection
for the meaning of the function spaces mentioned in the theorems below.
\begin{thm}
\label{thm:smoothsols}Let $0<\alpha<1$, and assume that \hyperref[eq:M1]{(M)},
\hyperref[eq:strconv]{(H)}, \hyperref[eq:f polynomial growth]{(F)},
\hyperref[eq: gx control]{(G)}, and (\ref{eq:SE}) hold. Then there
exists a unique classical solution $(u,m)\in C^{3,\alpha}(\overline{Q_{T}})\times C^{2,\alpha}(\overline{Q_{T}})$
to (\ref{eq:mfg}).
\end{thm}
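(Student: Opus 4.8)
The plan is to recast (\ref{eq:mfg}) as a single quasilinear second-order boundary value problem for $u$ alone via the transformation of P.-L.~Lions, to observe that under (\ref{eq:SE}) this problem is uniformly elliptic with oblique boundary conditions, to solve it by the non-linear method of continuity on the strength of a priori H\"older estimates, and finally to recover $m$ and prove uniqueness through the Lasry--Lions monotonicity identity.

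\textbf{Step 1 (elliptic reformulation).} Since $f(x,\cdot)$ is strictly increasing, the Hamilton--Jacobi equation inverts to $m=f^{-1}(x,w)$ with $w:=-u_{t}+H(x,D_{x}u)$ and $m_{w}=1/f_{m}>0$. Inserting this into the continuity equation and multiplying by $-f_{m}$ produces a quasilinear equation for $u$ on $\overline{Q_{T}}$ whose principal part is
\[
u_{tt}\;-\;2\,D_{p}H(x,D_{x}u)\cdot D_{x}u_{t}\;+\;\bigl(D_{p}H\otimes D_{p}H\;+\;f_{m}\,m\,D_{pp}^{2}H\bigr):D_{x}^{2}u ,
\]
with symbol $(\tau-D_{p}H\cdot\xi)^{2}+f_{m}\,m\,\xi^{\top}D_{pp}^{2}H\,\xi$ at $(\tau,\xi)\in\mathbb{R}\times\mathbb{R}^{d}$. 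By strict convexity of $H$ (\hyperref[eq:strconv]{(H)}) this is nonnegative, and under (\ref{eq:SE})---where $f(\cdot,0)\equiv-\infty$ keeps $m$ bounded away from $0$ and $f_{m}m$ bounded below---it is uniformly positive, so the equation is strictly elliptic. The condition $m(\cdot,0)=m_{0}$ becomes the oblique condition $-u_{t}+H(x,D_{x}u)=f(x,m_{0}(x))$ on $\{t=0\}$, while $u(\cdot,T)=g(x,m(\cdot,T))$ becomes $u=g\bigl(x,f^{-1}(x,-u_{t}+H(x,D_{x}u))\bigr)$ on $\{t=T\}$; the strict monotonicity of $g$ from \hyperref[eq:M1]{(M)} is exactly what keeps the coefficient of $u_{t}$ in the linearization of this relation away from zero, making it a \emph{strictly} oblique boundary condition.

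\textbf{Step 2 (a priori estimates and existence).} For a $C^{3,\alpha}(\overline{Q_{T}})$ solution of the reformulated problem---or of a member of a continuation family of such problems---one proves, in order: an $L^{\infty}$ bound on $u$ by the maximum principle for oblique problems; two-sided bounds $0<c_{0}\le m\le C_{0}$, equivalently on $w$, using (\ref{eq:SE}) and the polynomial growth of $f$ in \hyperref[eq:f polynomial growth]{(F)}; and then the decisive gradient bound $\|D_{x}u\|_{L^{\infty}}+\|u_{t}\|_{L^{\infty}}\le C$ by the Bernstein method---differentiating the equation, applying the maximum principle to an auxiliary function of the form $|Du|^{2}e^{\lambda u}$, controlling the interior terms by the quadratic growth of $H$ and the monotonicity structure, and absorbing the boundary terms on $\{t=0\}$ and $\{t=T\}$ by differentiating the oblique conditions, where obliqueness (hence $g_{m}>0$, from \hyperref[eq:M1]{(M)}) is used crucially. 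With $\|u\|_{C^{1}}$ controlled and the operator uniformly elliptic, De Giorgi--Nash--Moser and Lieberman-type estimates for oblique problems give $u\in C^{1,\beta}(\overline{Q_{T}})$; Schauder theory for oblique problems, together with the control on the spatial derivatives of $g$ in \hyperref[eq: gx control]{(G)}, upgrades this to $C^{2,\alpha}$, and a bootstrap using the smoothness of $H$, $f$, $g$ yields $C^{3,\alpha}$, all with estimates depending only on the data. Existence then follows by the non-linear method of continuity: one joins the reformulated equation to a solvable linear oblique problem along a homotopy keeping ellipticity and obliqueness uniform; the set of solvable parameters is nonempty, open by the implicit function theorem in H\"older spaces (the linearized oblique operator being Fredholm of index zero with trivial kernel by the maximum principle), and closed by the uniform estimates, so the endpoint is attained. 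Setting $m:=f^{-1}(x,-u_{t}+H(x,D_{x}u))\ge c_{0}>0$ gives $m\in C^{2,\alpha}(\overline{Q_{T}})$, and reversing the substitution shows $(u,m)$ solves (\ref{eq:mfg}) with the prescribed initial and terminal data.

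\textbf{Step 3 (uniqueness and the main obstacle).} Given two classical solutions $(u_{1},m_{1})$ and $(u_{2},m_{2})$, differentiate $\int_{\mathbb{T}^{d}}(u_{1}-u_{2})(m_{1}-m_{2})\,dx$ in $t$, use the four equations of (\ref{eq:mfg}) and integrate by parts on the torus; one obtains $\frac{d}{dt}\int_{\mathbb{T}^{d}}(u_{1}-u_{2})(m_{1}-m_{2})\,dx\le0$, the defect being a sum of $\int(f(x,m_{1})-f(x,m_{2}))(m_{1}-m_{2})\,dx$ and two terms $\int m_{i}\,[\,H(x,D_{x}u_{j})-H(x,D_{x}u_{i})-D_{p}H(x,D_{x}u_{i})\cdot(D_{x}u_{j}-D_{x}u_{i})\,]\,dx$, all nonnegative by monotonicity of $f$ and convexity of $H$. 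Integrating over $(0,T)$, the $t=0$ boundary term vanishes since $m_{1}(\cdot,0)=m_{2}(\cdot,0)=m_{0}$, while the $t=T$ boundary term equals $\int_{\mathbb{T}^{d}}(g(x,m_{1})-g(x,m_{2}))(m_{1}-m_{2})\,dx\ge0$; hence all of these nonnegative quantities vanish, and strict monotonicity of $f$ (and of $g$) forces $m_{1}\equiv m_{2}$, whence $u_{1}\equiv u_{2}$ by uniqueness for the Hamilton--Jacobi equation with the common right-hand side. I expect the main obstacle to be the Bernstein gradient estimate carried up to the boundary: the fully nonlinear, coupled terminal condition $u=g(x,m(\cdot,T))$ makes the boundary analysis delicate, and the uniform ellipticity on which it rests is only available once $w=-u_{t}+H(x,D_{x}u)$ (equivalently $m$) has been bounded away from its degenerate values---so the estimates must be sequenced to close this loop, extracting the bounds on $w$ from the boundary data and the $L^{\infty}$ bound before attacking the full gradient bound.
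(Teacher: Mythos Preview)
Your overall strategy matches the paper's: reformulate as a quasilinear elliptic oblique problem, obtain a priori $C^{1,\gamma}$ estimates, run the non-linear method of continuity, and conclude uniqueness via Lasry--Lions. Two execution points in your sketch, however, diverge from what the paper shows actually works.

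First, you propose obtaining global two-sided bounds $0<c_{0}\le m\le C_{0}$ \emph{before} the gradient estimate. The paper does not, and cannot, do this: a global lower bound on $m$ is equivalent to a lower bound on $w=-u_{t}+H(x,D_{x}u)$, which already presupposes control of $Du$. What is obtained a priori is only a two-sided bound on $m(\cdot,T)$, deduced from the $L^{\infty}$ bound on $u$ via the terminal condition. The paper then shows $L_{u}(u_{t})=0$, so $u_{t}$ attains its extrema on $\partial Q_{T}$ where $m$ is controlled; this yields $u_{t}\le\|H(\cdot,D_{x}u)\|_{C^{0}}-f_{0}(\eta_{0})$, with coefficient exactly $1$ on the $H$ term. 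The upshot is a \emph{conditional} lower bound on $m$, valid only at an interior maximum point of the Bernstein auxiliary function, which suffices to make the equation uniformly elliptic at that single point. Your ``main obstacle'' paragraph correctly locates the difficulty, but the resolution you suggest---extracting global $w$-bounds from the boundary data and the $L^{\infty}$ bound alone---does not close the loop.

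Second, the auxiliary function is not the standard $|Du|^{2}e^{\lambda u}$ you propose, but $v=H(x,D_{x}u)+\tfrac{c_{1}}{2}\tilde{u}^{2}$, where $\tilde{u}$ is an affine-in-$t$ shift of $u$ with prescribed signs at $t=0$ and $t=T$ (used to handle the two boundary pieces separately). The paper states explicitly that taking $\Phi=H$ rather than $|D_{x}u|^{2}$ or $|Du|^{2}$ is \emph{crucial}: it produces structural cancellations among the terms involving $D_{x}f$, $D_{x}H$, $D^{2}_{xp}H$ in the linearization that cannot otherwise be absorbed under the subquadratic oscillation hypotheses, and it allows the sharp $u_{t}$ bound above to be inserted without extraneous constants. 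So while your high-level outline is right, the Bernstein step as you have written it would likely not go through in the generality of \hyperref[eq:strconv]{(H)}, \hyperref[eq:f polynomial growth]{(F)}.

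A minor point: the homotopy in the paper is not to a linear problem but to another MFG system (with $H^{0}=\tfrac12|p|^{2}+f(x,1)$, $g^{0}=m$, $m_{0}^{0}\equiv1$) whose explicit solution is $(u,m)\equiv(1,1)$; this keeps the estimates of Section~\ref{sec:A-priori-estimates} applicable uniformly along the homotopy. Also, the strict monotonicity of $g$ comes from \hyperref[eq: gx control]{(G)}, not \hyperref[eq:M1]{(M)}.
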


\begin{thm}
\label{thm:weaksols}Assume that \hyperref[eq:M1]{(M)}, \hyperref[eq:strconv]{(H)},
\hyperref[eq:f polynomial growth]{(F)}, \hyperref[eq: gx control]{(G)},
and (\ref{eq:DE}) hold. Then the following is true:
\begin{enumerate}
\item[(i)] There exists a weak solution $(u,m)\in(BV(Q_{T})\cap L^{\infty}(Q_{T}))\times(C([0,T],H^{-1}(\mathbb{T}^{d}))\cap L^{\infty}(Q_{T}))$
to (\ref{eq:mfg}). 
\item[(ii)] The solution $(u,m)$ is the a.e. limit, as $\epsilon\rightarrow0$,
of solutions $(u^{\epsilon},m^{\epsilon})\in C^{3,\alpha}(\overline{Q_{T}})\times C^{2,\alpha}(\overline{Q_{T}})$
to MFG systems satisfying (\ref{eq:SE}). Furthermore, $(u^{\epsilon}(\cdot,T),m^{\epsilon}(\cdot,T))\rightarrow(u(\cdot,T),m(\cdot,T))$
a.e. in $\mathbb{T}^{d}$.
\item[(iii)] If $(u',m')$ is another weak solution to (\ref{eq:mfg}), then $m=m'$
a.e. in $Q_{T}$, and $u=u'$ a.e. in $\{m>0\}$. Moreover, $m(\cdot,T)=m'(\cdot,T)$,
$u(\cdot,T)=u'(\cdot,T)$, and $u(\cdot,0)=u'(\cdot,0)$ a.e. in $\mathbb{T}^{d}$.
\end{enumerate}
\end{thm}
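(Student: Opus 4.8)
The plan is to produce the weak solution of parts (i) and (ii) simultaneously, as the limit of the smooth solutions provided by Theorem~\ref{thm:smoothsols} for a family of \emph{strictly} elliptic, i.e.\ (\ref{eq:SE})-type, approximations of the degenerate system, and then to derive the uniqueness statement (iii) from the Lasry--Lions monotonicity identity, in the spirit of the variational theory of Cardaliaguet and Graber \cite{CardaliaguetGraber}.

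\emph{Construction and uniform estimates.} For $\epsilon>0$ I would replace $f$ by a coupling $f^{\epsilon}\le f$ that blows up as $m\downarrow0$ --- for instance $f^{\epsilon}(x,m)=f(x,m)-\epsilon m^{-1}$, or a variant with the blow-up rate tuned to (\ref{eq:SE}) --- with $f^{\epsilon}\to f$ locally uniformly on $\mathbb{T}^{d}\times(0,\infty)$, and such that $(f^{\epsilon},g)$ satisfies \hyperref[eq:M1]{(M)}, \hyperref[eq:strconv]{(H)}, \hyperref[eq:f polynomial growth]{(F)}, \hyperref[eq: gx control]{(G)} with constants independent of $\epsilon$, together with (\ref{eq:SE}). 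Theorem~\ref{thm:smoothsols} then furnishes classical solutions $(u^{\epsilon},m^{\epsilon})\in C^{3,\alpha}(\QT)\times C^{2,\alpha}(\QT)$ with $m^{\epsilon}>0$. The technically central point is a family of $\epsilon$-uniform a priori bounds. Testing the Hamilton--Jacobi equation against $m^{\epsilon}-m_{0}$ and the continuity equation against $u^{\epsilon}$ yields the MFG energy identity, which --- combined with the convexity of $H$, the monotonicity of $f^{\epsilon}$ and $g$, the polynomial growth \hyperref[eq:f polynomial growth]{(F)}, and the positivity of $m_{0}$ --- controls $\|m^{\epsilon}\|_{L^{\infty}(Q_{T})}$, $\|u^{\epsilon}\|_{L^{\infty}(Q_{T})}$, $\intQ m^{\epsilon}\,|D_{p}H(x,D_{x}u^{\epsilon})|^{2}$, and the terminal energy $\intO G(x,m^{\epsilon}(\cdot,T))\,dx$ (with $G(x,\cdot)$ a primitive of $g(x,\cdot)$). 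Standard Hamilton--Jacobi semiconcavity estimates in $x$, together with the one-sided bound $u^{\epsilon}_{t}\ge -C$ read off from the equation, then give a uniform $BV(Q_{T})$ bound on $u^{\epsilon}$; and writing the continuity equation as $m^{\epsilon}_{t}=\textrm{div}\bigl(m^{\epsilon}D_{p}H(x,D_{x}u^{\epsilon})\bigr)$, with its flux bounded in $L^{1}$, gives equicontinuity of $t\mapsto m^{\epsilon}(\cdot,t)$ in $H^{-1}(\mathbb{T}^{d})$. I expect these uniform estimates, especially the lower bound on $u^{\epsilon}$ and the $BV$ bound, to be the main obstacle.

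\emph{Compactness and passage to the limit.} Along a subsequence, compactness of $BV(Q_{T})$ in $L^{1}$ gives $u^{\epsilon}\to u$ a.e., with $u\in BV(Q_{T})\cap L^{\infty}(Q_{T})$, while Arzel\`{a}--Ascoli in $C([0,T];H^{-1}(\mathbb{T}^{d}))$ and weak-$*$ compactness in $L^{\infty}$ give $m^{\epsilon}\to m$ in $C([0,T];H^{-1})$ and $m^{\epsilon}\weak m$ in $L^{\infty}(Q_{T})$. The crux is to upgrade the last convergence to a.e.\ convergence: writing the energy identities for two parameters $\epsilon$ and $\eta$ and using $u^{\epsilon}$, $u^{\eta}$ as mutual test functions, the convexity of $H$ and the positivity of $m^{\epsilon},m^{\eta}$ leave an inequality of the form $\intQ\bigl(f^{\epsilon}(x,m^{\epsilon})-f^{\eta}(x,m^{\eta})\bigr)(m^{\epsilon}-m^{\eta})+(\text{terminal term})\le o(1)$, and the \emph{strict} monotonicity of $f$ then forces $\{m^{\epsilon}\}$ to be Cauchy in measure, so $m^{\epsilon}\to m$ a.e.\ and $f^{\epsilon}(x,m^{\epsilon})\to f(x,m)$ a.e. With this in hand one passes to the limit in the weak formulations of both equations and in the energy (in)equality --- strong convergence of $m^{\epsilon}$ taking care of the coupling term and lower semicontinuity of the Hamiltonian terms --- and checks that $(u,m)$ is a weak solution in the sense of Section~\ref{sec:Weak-solutions}, which is (i). For (ii), the same strong convergence together with the $C([0,T];H^{-1})$ bound yields $m^{\epsilon}(\cdot,T)\to m(\cdot,T)$ a.e., and hence $u^{\epsilon}(\cdot,T)=g(x,m^{\epsilon}(\cdot,T))\to g(x,m(\cdot,T))=u(\cdot,T)$ a.e.

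\emph{Uniqueness.} Let $(u,m)$ and $(u',m')$ be two weak solutions. The Lasry--Lions computation --- differentiating $t\mapsto\intO(u-u')(m-m')\,dx$, using both equations, integrating on $(0,T)$, and invoking $m(\cdot,0)=m'(\cdot,0)=m_{0}$ and the terminal conditions --- gives
\[
\intO\bigl(g(x,m(\cdot,T))-g(x,m'(\cdot,T))\bigr)\bigl(m(\cdot,T)-m'(\cdot,T)\bigr)\,dx+\intQ\bigl(f(x,m)-f(x,m')\bigr)(m-m')+E=0,
\]
where $E=\intQ m\bigl(H(x,D_{x}u')-H(x,D_{x}u)-D_{p}H(x,D_{x}u)\cdot(D_{x}u'-D_{x}u)\bigr)+\intQ m'\bigl(H(x,D_{x}u)-H(x,D_{x}u')-D_{p}H(x,D_{x}u')\cdot(D_{x}u-D_{x}u')\bigr)\ge 0$ by convexity of $H$ and positivity of $m,m'$. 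Each of the three summands is nonnegative, hence all vanish: strict monotonicity of $f$ gives $m=m'$ a.e.\ in $Q_{T}$; strict monotonicity of $g$ gives $m(\cdot,T)=m'(\cdot,T)$, and therefore $u(\cdot,T)=u'(\cdot,T)$, a.e.; and $E=0$ together with strict convexity of $H$ forces $D_{x}u=D_{x}u'$ a.e.\ on $\{m>0\}$, whence, by the Hamilton--Jacobi equation (with $m=m'$), also $\partial_{t}u=\partial_{t}u'$ there. Propagating these two identities along the flow of the continuity equation --- which leaves $\{m>0\}$ invariant --- backward from $t=T$, where $u=u'$, yields $u=u'$ a.e.\ on $\{m>0\}$; and since $m_{0}>0$ the density $m$ stays positive for small $t$, so in particular $u(\cdot,0)=u'(\cdot,0)$ a.e. The delicate point here is that the Lasry--Lions identity must be justified within the low regularity of weak solutions; this is precisely what the definition of weak solution and its basic properties from Section~\ref{sec:Weak-solutions} are designed to allow, and it is where one must be careful about the meaning of $D_{x}u$ and of the flux $m\,D_{p}H(x,D_{x}u)$.
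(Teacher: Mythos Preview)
Your overall architecture matches the paper's, but two of the three technically central steps are genuinely missing or wrong.

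\textbf{The uniform $L^{\infty}$ bounds.} You claim the energy identity, together with convexity and monotonicity, controls $\|m^{\epsilon}\|_{L^{\infty}}$ and $\|u^{\epsilon}\|_{L^{\infty}}$. It does not: energy methods yield integral bounds, not pointwise ones. In the paper these bounds come from the \emph{elliptic} maximum principle applied to the quasilinear formulation (\ref{eq:ellip}). The $L^{\infty}$ bound on $u^{\epsilon}$ is Lemma~\ref{lem:aprioriu}, and the bound on $\max m^{\epsilon}(\cdot,T)$ is Corollary~\ref{cor:apriorim}; but the estimates of Section~\ref{sec:A-priori-estimates} all depend on $1/\min m^{\epsilon}(\cdot,T)$, which blows up as $\epsilon\to0$. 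The missing idea is a \emph{separate} Bernstein-type argument (Lemma~\ref{lem: f a priori bound weak}) that bounds $\max_{\QT} f^{\epsilon}(\cdot,m^{\epsilon})$ independently of any lower bound on $m^{\epsilon}$, by applying the linearized operator to $-u_{t}+H(x,D_{x}u)+\tfrac{c_{1}}{2}\utilde^{2}$ and exploiting that at an interior maximum the term $|-u_{t}+D_{p}H\cdot D_{x}u|^{2}$ dominates both $|D_{x}u|^{4}$ and $|f|^{2}$. Only after this does one get $m^{\epsilon}\le C$, hence $u_{t}^{\epsilon}\ge -C$, hence the $BV$ bound by integrating the HJ equation (no semiconcavity is used).

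\textbf{Uniqueness.} Your plan to run Lasry--Lions directly between two weak solutions fails for a structural reason: in Definition~\ref{def:weaksol def} the HJ equation is only an \emph{inequality}, so subtracting the HJ relations for $u$ and $u'$ produces a defect measure with the wrong sign in one of the two directions, and the identity you write down is not available. The paper circumvents this by comparing the arbitrary weak solution $(u',m')$ not with $(u,m)$ but with the \emph{smooth} approximants $(u^{\epsilon},m^{\epsilon})$: since $u^{\epsilon}\in C^{3}$ it is an admissible test function in every distributional relation, and the HJ equation for $u^{\epsilon}$ is an equality, so the cross terms close up (equations (\ref{eq:1 weak})--(\ref{eq: final weak})), with the defect measure $\mu$ from the HJ inequality of $u'$ landing on the nonnegative side. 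Sending $\epsilon\to0$ then gives $m=m'$, $m(\cdot,T)=m'(\cdot,T)$, and $D_{x}u=D_{x}u'$ a.e.\ on $\{m>0\}$. Your final step, ``propagating along the flow of the continuity equation backward from $t=T$'', is not rigorous at this regularity (there is no flow map when $D_{x}u$ is merely $L^{2}$ and $m$ may vanish). The paper instead shows that $\overline{u}=\max(u,u')$ is a distributional HJ subsolution, tests it against $m^{\epsilon}$ on $[s,T]$, passes to the limit, and compares with the identity (\ref{eq:weak identity for all s}) to obtain $\intO \overline{u}(s)m(s)\le\intO u(s)m(s)$; combined with the symmetric inequality $\intO m(s)u(s)=\intO m(s)u'(s)$ (from a second Lasry--Lions step on $[s,T]$), this forces $u=u'$ a.e.\ on $\{m>0\}$ and, since $m_{0}>0$, $u(\cdot,0)=u'(\cdot,0)$.
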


Despite the connections with the variational theory, we do not use
variational methods. Instead, we follow the ideas of Lions and his
work on the so-called planning problem, where the initial and terminal
densities $m(\cdot,0)$ and $m(\cdot,T)$ are prescribed \cite{LasryLions,Lions}.
It was first observed by Lions that if, for each fixed $x\in\mathbb{T}^{d},$
$f^{-1}(x,\cdot)$ is the inverse function of $f(x,\cdot)$, it is
possible to formally eliminate the variable $m$ from the system.
This transforms the problem into a second order quasilinear elliptic
equation with a non-linear oblique boundary condition which, in the
special case where $D_{x}H,D_{x}f,D_{x}g\equiv0$, may be written
as follows (see Section \ref{sec:Assumptions-and-general} for the
general setting):
\begin{equation}
\begin{cases}
-u_{tt}-\tr((D_{p}H\otimes D_{p}H+\chi(-u_{t}+H)D_{pp}^{2}H)D_{xx}^{2}u)+2D_{p}H\cdot D_{x}u_{t}=0 & \text{in }Q_{T},\\[1pt]
-u_{t}+H-f(m_{0})=0 & \text{on }\mathbb{T}^{d}\times\{t=0\},\\
-g(f^{-1}(-u_{t}+H))+u=0 & \text{on }\mathbb{T}^{d}\times\{t=T\},
\end{cases}\label{eq:ellip special}
\end{equation}
where the function $\chi(w)$ is defined by
\[
\chi(w)=f^{-1}(w)f_{m}(f^{-1}(w)).
\]
We emphasize the fact that, while (\ref{eq:ellip special}) is an
elliptic second order problem, the original system (\ref{eq:mfg})
is of first order and, in particular, it models a game with no diffusion. 

Our approach to obtain classical solutions when (\ref{eq:ellip special})
is strictly elliptic was developed by Lions, who applied, in his lectures
at Collège de France, the following strategy for finding regular solutions
to the planning problem. Viewed as a quasilinear elliptic equation
with a non-linear boundary condition, the problem can be tackled with
classical methods from the field of a priori estimates: specifically,
maximum principle techniques and the Bernstein method to obtain bounds
on the solution and its gradient, the application of classical estimates
to bound the Hölder norm of the gradient up to the boundary, and soft
functional analytic tools to attain the classical solutions.

In order to study the general MFG system, even when (\ref{eq:ellip special})
happens to be degenerate elliptic and the solutions are expected to
be discontinuous, our strategy is to first obtain smooth solutions
in the strictly elliptic case, and to subsequently find the weak solution
as a viscous limit of strictly elliptic problems. The success of this
approach is based on the fact that, once smooth solutions are known
to exist, every a priori estimate that is independent of the ellipticity
constant can be used as a source of compactness and regularity for
the limit. Our a priori estimates are supplemented by energy computations
based on the Lasry-Lions monotonicity procedure, which is the canonical
method for obtaining integral bounds and proving uniqueness in MFG
systems.

To identify and motivate the condition that determines the strict
or degenerate ellipticity of the system, we remark that the determinant
corresponding to the elliptic equation in (\ref{eq:ellip special})
becomes zero precisely as $\chi=mf_{m}\rightarrow0$. This is in accordance
with the heuristic principle that the regularity of $u$ is lost in
regions where there are few to no players (no information), as well
as when the cost fails to be strictly monotone (concentration blowup).
Because, as is standard, $f$ is assumed to grow at least logarithmically
as $m\rightarrow\infty$, this degeneracy can only happen as $m\rightarrow0$.
In the absence of diffusion, for the strict positivity of $m$ to
be preserved, we expect to have a very strong incentive for the players
to navigate through regions of low density. With these considerations
in place, we will classify the system (\ref{eq:mfg}) as being strictly
elliptic precisely when $f$ has a singularity at $m=0$, and as degenerate
elliptic otherwise.

It should be noted that, for the stationary problem, classical solutions
were obtained in \cite{Evans}, for the case where $f=\log m$, and
in \cite{Gomes-1}, for the case where $H(x,p)=\frac{1}{2}|p|^{2}-V(x)$,
under a small-oscillation assumption. For second order systems with
a (possibly) degenerate diffusion and a density-independent terminal
condition, the variational theory was extended in \cite{CardaliaguetGraberPorrettaTonon},
where it was shown (compare with Theorem \ref{thm:weaksols}) that
the weak solutions to the first order problem arise as viscous limits
of weak solutions to second order MFG systems. Finally, the most general
result for weak solutions to the second order problem is due to Porretta
\cite{Porretta}, and, unlike \cite{CardaliaguetGraberPorrettaTonon},
it does not use variational methods.

The content and structure of the paper are described as follows. Section
\ref{sec:Assumptions-and-general} explains the general setting and
assumptions that will be used, followed by the statements of the preliminary
results from the classical literature on quasilinear elliptic equations
and oblique derivative problems that will be used to prove existence
of classical solutions. 

In Section \ref{sec:A-priori-estimates}, we obtain all the necessary
a priori estimates for the strictly elliptic problem. The main results,
which deal with the system in full generality, are summarized in Theorem
\ref{thm:Full C^1 a priori bound}. We also establish a minor variant,
in the special case when the $x$ dependence has a simple structure,
that is, when $H(x,p)-f(x,m)\equiv H(p)-f(m)-V(x)$, in Theorem \ref{thm:V(x) a priori C^1}.
This result states that, with this structural assumption, it is not
necessary to require $f$ to grow at most polynomially in $m$, allowing
for examples such as $f(m)=e^{m}+\log m$. Section \ref{subsec:Estimates sol}
contains the $L^{\infty}$--bounds on the solution $u$, as well
as two-sided bounds for the terminal density $m(\cdot,T)$, obtained
through maximum principle methods. These methods exploit the fact
that the strict monotonicity property of $g$ is equivalent to the
linearized version of the problem (\ref{eq:ellip special}) having
an oblique boundary condition, which is of Robin type in the upper
component of $\partial Q_{T}$. In Section \ref{subsec:Estimates grad},
the gradient bound is obtained by means of the Bernstein method. To
deal with the asymmetry between the space and time derivatives in
(\ref{eq:ellip special}), it is necessary to first get a precise
bound for $u_{t}$ in terms of the space gradient, utilizing the a
priori lower bound on $m(\cdot,T)$ and the maximum principle. This,
in turn, provides a ``conditional'' a priori lower bound for $m$,
namely a lower bound that holds exclusively at points $(x,t)$ where
the function $H(x,D_{x}u)$ is close to its maximum value. The conditional
nature of this bound, as well as the structure of (\ref{eq:ellip special})
in its fully general form, require a non-conventional choice of an
auxiliary function of the space-time gradient.

Section \ref{sec:Classical-solutions} deals with the existence of
classical solutions for the strictly elliptic problem, including the
proof of Theorem \ref{thm:smoothsols}. The corresponding variant
for the case of a fast-growing $f$ is presented in Theorem \ref{thm:V(x) smoothsols}.
It is first explained how a classical result from the theory of oblique
derivative problems, due to G.M. Lieberman \cite{Lieberman}, immediately
yields an a priori Hölder estimate for $Du$ up to the boundary in
terms of the $L^{\infty}$--bounds on $u$ and $Du$. Existence is
then proved through an application of the non-linear method of continuity,
the classical Schauder estimates for the linear oblique derivative
problem, and a variant of a convergence theorem of R. Fiorenza \cite{Fiorenza,Fiorenza-1,Lieberman solvability}.

In Section \ref{sec:Weak-solutions}, we develop the weak theory for
the degenerate elliptic problem, and obtain the proof of Theorem \ref{thm:weaksols}.
It is first established that, for strictly elliptic problems, there
exists an upper bound for the density that is independent of any lower
bounds on $m(\cdot,T)$. After deriving some necessary energy estimates
and defining an $\epsilon$-perturbation of the coupling $f$ that
makes the problem strictly elliptic, the solution is obtained as the
limit when $\epsilon\rightarrow0$ of the corresponding smooth solutions.
It is also proved, in Theorem \ref{thm:Lip degenerate}, that when
the data is independent of the space variable, the value function
$u$ and the terminal density $m(\cdot,T)$ are globally Lipschitz
continuous.
\begin{rem}
We mention here some related work that was released after this paper.
In \cite{Munoz}, the author showed existence of classical solutions
for the so-called extended MFG, a generalization of (\ref{eq:mfg})
introduced by Lions and Souganidis \cite{LionsSoug}, having a fully
general continuity equation, and a non-separated Hamiltonian, namely
$H=H(x,p,m)$, with arbitrary superlinear growth. In particular, classical
solutions were obtained for first order MFG with congestion. As for
weak solutions to (\ref{eq:mfg}), the most general result to date
was obtained by Cardaliaguet and Porretta \cite{CardaliaguetPorretta},
where the solution is obtained as a vanishing viscosity limit to the
weak solutions from \cite{Porretta}.
\end{rem}

\subsection*{Notation\label{subsec:Notation}}

Let $n,k\in\mathbb{N}.$ Given $x,y\in\mathbb{R}^{n},$ $x$ and $y$
will always be understood to be row vectors, and their scalar product
$xy^{T}$ will be denoted by $x\cdot y$. For any bounded set $\Omega$,
with $\Omega\subset Q_{T}$, $\Omega\subset\mathbb{T}^{d}$, or $\Omega\subset[0,T]$,
and $0\leq\alpha<1$, $C^{k,\alpha}(\Omega$), refers to the space
of $k$ times differentiable real-valued functions with $\alpha$--Hölder
continuous $k$th order derivatives, and, for $u\in C^{0,\alpha}(\Omega)$,
the Hölder semi-norm of $u$ will be denoted by $[u]_{\alpha,\Omega}.$
Similarly, if $H^{-1}(\mathbb{T}^{d})$ denotes the dual space of
the Sobolev space $H^{1}(\mathbb{T}^{d})$, the space of $H^{-1}(\mathbb{T}^{d})$--valued
$\alpha$--Hölder continuous functions $C^{0,\alpha}([0,T];H^{-1}(\mathbb{T}^{d}))$
is equipped with the Hölder semi-norm $[\cdot]_{\alpha,[0,T],H^{-1}}$.
For functions $\Phi(x,t,z,p,s)\in C^{0}(Q_{T}\times\mathbb{R}\times\mathbb{R}^{d+1})$,
where typically $(x,t,z,p,s)=(x,t,u(x,t),D_{x}u,u_{t})$, the conventions
$\xo\equiv(x,t)$ and $q\equiv(p,s)$ will always be in place. The
notation $Du$, $D\Phi$ will always refer to the full gradient in
all variables, so that, for instance $Du=D_{\xo}u=(D_{x}u,u_{t})$,
and $D\Phi=(D_{\xo}\Phi,\Phi_{z},D_{q}\Phi$). For $(x,t)\in\partial Q_{T}$,
$\nu(x,t)=\pm(0,0,\ldots,1)$ denotes the outward pointing unit normal
vector. We write $C=C(K_{1},K_{2},\ldots,K_{M})$ for a positive constant
$C$ depending monotonically on the non-negative quantities $K_{1},\ldots,K_{M}.$
We also define, for $K>0$, and any set $V,$ $V_{K}=\{(y,z,q)\in V\times\mathbb{R\times}\mathbb{R}^{d+1}:|z|+|q|\leq K\}$.
We write $C^{k}(\overline{Q_{T}})^{*}$ for the dual space of $C^{k}(\overline{Q_{T}})$.
In particular, $C^{0}(\overline{Q_{T}})^{*}$ is the space of finite
signed Borel measures on $\overline{Q_{T}}$, and $C^{\infty}(\overline{Q_{T}})^{*}$
is the space of distributions. Moreover, BV$(Q_{T})$ is the space
of functions of bounded variation, that is, the space of $L^{1}(Q_{T})$
functions such that their distributional derivatives are elements
of $C^{0}(\overline{Q_{T}})^{*}$, and $L_{+}^{\infty}(Q_{T}$) consists
of the functions $m\in L^{\infty}(Q_{T}$) such that $m\geq0$ almost
everywhere (a.e. for short) in $Q_{T}$. Finally, for $m\in L_{+}^{\infty}(Q_{T})$,
$L_{m}^{2}(Q_{T})$ consists of the functions $v$ such that $|v|^{2}m\in L^{1}(Q_{T})$. 

\section{Assumptions and general setting \label{sec:Assumptions-and-general}}

\subsection{The MFG system as an elliptic problem}

We now present the general elliptic formulation of the MFG system.
As explained in the previous section, it is an equivalent problem
satisfied by $u$, whenever the pair $(u,m)=(u,f^{-1}(\cdot,-u_{t}+H(\cdot,D_{x}u))\in C^{2}(\overline{Q_{T}})\times C^{1}(\overline{Q_{T}})$
is a classical solution to (\ref{eq:mfg}). It is obtained after eliminating
$m$ from the system, and it consists of a quasilinear elliptic equation
with a non-linear oblique boundary condition,
\begin{equation}
\tag{Q0}\begin{cases}
Qu=-\tr(A(x,Du)D^{2}u)+b(x,Du)=0 & \text{in }Q_{T},\\[1pt]
Nu=B(x,t,u,Du)=0 & \text{on }\partial Q_{T},
\end{cases}\label{eq:ellip}
\end{equation}
\renewcommand*{\theHequation}{nota2.\theequation}where, for all $(x,t,z,p,s)\in\overline{Q_{T}}\times\mathbb{R}\times\mathbb{R}^{d+1}$,\vspace*{-0.3cm}

\begin{align}
\tag{Q1}A(x,p,s) & =(D_{p}H,-1)\otimes(D_{p}H,-1)+\chi(x,-s+H(x,p))\begin{pmatrix}D_{pp}^{2}H(x,p) & 0\\
0 & 0
\end{pmatrix},\label{eq:matrix}
\end{align}
\renewcommand*{\theHequation}{nota3.\theequation}\vspace*{-0.3cm}
\begin{multline}
\tag{Q2}b(x,p,s)=-D_{x}H(x,p)\cdot D_{p}H(x,p)+D_{x}f(x,f^{-1}(x,-s+H(x,p)))\cdot D_{p}H(x,p)\\
-\chi(x,-s+H(x,p))\tr(D_{xp}^{2}H(x,p)),\label{eq:first order term}
\end{multline}
\begin{align}
\tag{B1}B(x,0,z,p,s)= & -s+H(x,p)-f(x,m_{0}(x)),\;B(x,T,z,p,s)=-g(x,f^{-1}(x,-s+H(x,p)))+z,\label{eq:boundary}
\end{align}
\renewcommand*{\theHequation}{nota4.\theequation}with the function
$\chi(x,w)$ being defined by
\[
\chi(x,w)=f^{-1}(x,w)f_{m}(x,f^{-1}(x,w)).
\]
We remark that the matrix $A$ is clearly non-negative, and since
$\text{\;\ensuremath{\det}}(A)=\chi^{d}\det D_{pp}^{2}H,$ the condition
for degeneracy is $\chi=mf_{m}=0$. For future use, we set

\[
h(x,w)=\sqrt{\chi(x,w)}.
\]

\subsection{Assumptions}

We now state the main assumptions \hyperref[eq:M1]{(M)}, \hyperref[eq:strconv]{(H)},
\hyperref[eq:f polynomial growth]{(F)}, \hyperref[eq: gx control]{(G)},
and \hyperref[eq:SE]{(E)}, that will be in place throughout the paper,
except when explicitly stated. Assumption \hyperref[eq:SE]{(E)} on
the ellipticity of the system contains the mutually exclusive possibilities
(\ref{eq:SE}) and (\ref{eq:DE}), and it will always be made clear
which of the two is in place. For the theory of weak solutions, the
differentiability assumptions on the data can naturally be weakened
through standard approximation arguments, but in the interest of clarity
such matters will not be considered. Throughout the assumptions, the
quantities $C_{0}>0$ and $0\leq\tau<1$ are fixed constants.
\begin{enumerate}
\item[(M)] (Assumptions on $m_{0}$) The initial density $m_{0}$ satisfies
\begin{equation}
\tag{M1}m_{0}\in C^{4}(\mathbb{T}^{d}),\;m_{0}>0,\text{ and }\int_{\mathbb{T}^{d}}m_{0}=1.\label{eq:M1}
\end{equation}
\renewcommand*{\theHequation}{notag5.\theequation}
\item[(H)] (Assumptions on $H$) The functions $H,\;D_{p}H,\;D_{pp}^{2}H$ are
four times continuously differentiable, and the following quadratic
growth and uniform convexity conditions hold:
\begin{equation}
\tag{H1}\frac{1}{C_{0}}I\leq D_{pp}^{2}H(x,p)\leq C_{0}I,\label{eq:strconv}
\end{equation}
\renewcommand*{\theHequation}{notag.\theequation}
\begin{equation}
\tag{H2}D_{p}H(x,p)\cdot p\geq2H(x,p)-C_{0},\label{qg1}
\end{equation}
\renewcommand*{\theHequation}{notag2.\theequation}
\begin{equation}
\tag{H3}|D_{ppp}^{3}H(x,p)|\leq C_{0}(1+|p|)^{-1},\label{eq:qg2}
\end{equation}
\renewcommand*{\theHequation}{notag3.\theequation}for all $(x,p)\in\mathbb{T}^{d}\times\mathbb{R}^{d}$.
The space oscillation of $H$ is at most subquadratic in $p$, namely
\begin{align}
\tag{HX}|D_{xxp}^{3}H| & \leq C(1+|p|)^{\tau},\;|D_{xpp}^{3}H|\leq C_{0}(1+|p|)^{\tau-1}.\label{eq:Hxxp, Hxpp bound}
\end{align}
\renewcommand*{\theHequation}{notag4.\theequation}
\item[(F)] (Assumptions on $f$) The continuous function $f:\mathbb{T}^{d}\times[0,\infty)\rightarrow[-\infty,\infty)$
is four times continuously differentiable on $\mathbb{T}^{d}\times(0,\infty)$
and strictly increasing in the second variable, with $f_{m}>0$. $f$
grows polynomially as $m\rightarrow\infty$, in the sense that its
growth is at least of degree zero, namely
\begin{equation}
\tag{F1}\liminf_{x\in\mathbb{T}^{d},\;m\rightarrow\infty}mf_{m}(x,m)>0,\label{eq:f polynomial growth}
\end{equation}
and its derivative $f_{m}$ satisfies a polynomial bound $|mf_{mm}|\leq C_{0}f_{m},$
which can be equivalently expressed in terms of $\chi(x,w)$ as
\begin{equation}
\tag{F2}|\chi_{w}|\leq C_{0}.\label{growth chi}
\end{equation}
The space derivative of $f$ satisfies the same polynomial bound,
\begin{equation}
\tag{FX1}|m(D_{x}f)_{m}|\leq C_{0}|D_{x}f|,\label{eq: fmx bound}
\end{equation}
\renewcommand*{\theHequation}{notag8.\theequation}as well as the
control
\begin{align}
\tag{FX2}|D_{x}f|,|D_{xx}^{2}f|\leq & C_{0}(1+|f|^{\tau/2}+|mf_{m}|^{(1+\tau)/2}).\label{eq:fx, fxx bound}
\end{align}
\renewcommand*{\theHequation}{not.\theequation}
\item[(G)] (Assumptions on $g$) The continuous function $g:\mathbb{T}^{d}\times[0,\infty)\rightarrow[-\infty,\infty)$
is four times continuously differentiable on $\mathbb{T}^{d}\times(0,\infty)$
and strictly increasing in the second variable, with $g_{m}>0$. The
control required for its space oscillation is that, for each $x\in\mathbb{T}^{d}$,
\begin{align}
\tag{GX} & \lim_{m\rightarrow\infty}g(x,m)=\sup_{\mathbb{T}^{d}\times[0,\infty)}g,\text{ and }g(x,0)=\inf_{\mathbb{T}^{d}\times[0,\infty)}g,\label{eq: gx control}
\end{align}
\renewcommand*{\theHequation}{notag10.\theequation}
\item[(E)]  (Ellipticity of the system) One of the following conditions holds:
\begin{equation}
\text{\tag{SE}the system (MFG) is \emph{strictly elliptic}, in the sense that }f(\cdot,0)\equiv-\infty,\label{eq:SE}
\end{equation}
\renewcommand*{\theHequation}{not1.\theequation}or
\begin{equation}
\tag{DE}\text{the system (MFG) is \emph{degenerate elliptic}, in the sense that }f(\cdot,0)>-\infty.\label{eq:DE}
\end{equation}
\renewcommand*{\theHequation}{not2\theequation}In the case of (\ref{eq:DE}),
since the density is not expected to be strictly positive, we assume
that $g(\cdot,0)>-\infty$.
\end{enumerate}
A few comments should be made about the assumptions on the spatial
oscillation. First, we remark that the subquadratic growth assumption
(\ref{eq:Hxxp, Hxpp bound}) can be interpreted as requiring that
the purely quadratic part of $H$ is independent of $x$. Condition
(\ref{eq:fx, fxx bound}), on the other hand, can be interpreted as
being dual to (\ref{eq:Hxxp, Hxpp bound}). Indeed, heuristically,
since $f$ is assumed to have polynomial growth, $mf_{m}\approx f$,
and $f=-u_{t}+H\approx|p|^{2}$, so both conditions impose the same
polynomial growth bound in the variable $|p|$. We consider now the
assumption (\ref{eq: gx control}) on the $x$--oscillation of $g$.
When $g$ is bounded, the first (resp. second) condition in (\ref{eq: gx control})
corresponds to a purely qualitative control on $|D_{x}g|$ that becomes
stricter as $m\rightarrow\infty$ (resp. $m\rightarrow0^{+})$. From
the modeling point of view, it can be interpreted as the requirement
that extremely crowded regions (resp. nearly empty regions) have roughly
the same terminal value for the players. 
\begin{rem}
For simplicity of the presentation, we observe that, up to increasing
the value of $C_{0}$, the following inequalities are trivial consequences
of (\ref{eq:strconv}), (\ref{eq:Hxxp, Hxpp bound}), and (\ref{eq:SE}),
and they will be used freely when pertinent.
\begin{equation}
\frac{1}{C_{0}}|p|^{2}-C_{0}\leq H(x,p)\leq C_{0}|p|^{2}+C_{0},|D_{p}H(x,p)|\leq C_{0}(1+|p|),\label{eq: H>=00003D|p|^2-C}
\end{equation}
\begin{equation}
|D_{x}H(x,p)|\leq C_{0}(1+|p|^{1+\tau}),\;|D_{xx}^{2}H(x,p)|\leq C_{0}(1+|p|^{1+\tau}),\label{eq: Hx, Hxx bound}
\end{equation}

\begin{equation}
|D_{xp}^{2}H(x,p)|\leq C_{0}(1+|p|)^{\tau},\label{eq: Hxp bound}
\end{equation}
\begin{align}
||\chi(\cdot,0)||_{C^{0}(\mathbb{T}^{d})}+||m_{0}||_{C^{1}(\mathbb{T}^{d})}+||f||_{C^{2}(\mathbb{T}^{d}\times[\min m_{0},\max m_{0}])}\leq C_{0}.\label{eq: old C1}
\end{align}
\end{rem}

\subsection{Preliminary results}

This subsection includes the classical results that will be required
in Section \ref{sec:Classical-solutions} to obtain the higher regularity
from a priori $C^{1}$ bounds. In this subsection only, it will not
be assumed that the problem (\ref{eq:ellip}) is explicitly given
by (\ref{eq:matrix}), (\ref{eq:first order term}), and (\ref{eq:boundary}),
but instead $(Q,N)$ will be a general pair of an elliptic quasilinear
operator and a fully non-linear boundary operator. In particular,
$A$ and $b$ will not necessarily be assumed to be independent of
$t$ and $u$. The first Theorem is the classical interior Hölder
gradient estimate for quasilinear equations, due to O. Ladyzhenskaya
and N. Uraltseva \cite[Lem. 2.1]{Lieberman}. 
\begin{thm}
\label{thm:Ladyzhenskaya interior}Let $u\in C^{2}(Q_{T})$ satisfy
$Qu=0$ in $Q_{T}$, with $A(x,t,z,q)\in C^{1}(Q_{T}\times\mathbb{R}\times\mathbb{R}^{d+1})$,
$b(x,t,z,q)\in C^{0}(Q_{T}\times\mathbb{R}\times\mathbb{R}^{d+1})$.
Suppose that $||u||_{C^{1}(Q_{T})}\leq K$, and that the constants
$\lambda_{K},\mu_{K}$ satisfy, in $Q_{T,K}$,
\begin{align}
A\geq\lambda_{K}I\;\;\text{ and \;\;}\mu_{K}\geq|A|+|DA|+|b|.\label{eq:interior estimate, ellipticity and coeff bound}
\end{align}
Then, for any $V\subset\subset Q_{T}$, there exist constants $C=C(K,\mu_{K}/\lambda_{K},\emph{dist}(V,\partial Q_{T})^{-1})$
and $\gamma=\gamma(K,\mu_{K}/\lambda_{K})$, such that
\[
[Du]_{\gamma,V}\leq C.
\]
\end{thm}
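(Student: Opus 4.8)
The plan is to linearize the equation by freezing the nonlinear coefficients along the given solution, to differentiate the resulting linear equation, and to apply the interior H\"older estimate of Krylov--Safonov (the non-divergence analogue of De Giorgi--Nash--Moser) to the first derivatives of $u$. Writing $\hat A(y):=A(y,u(y),Du(y))$ and $\hat b(y):=b(y,u(y),Du(y))$ for $y=(x,t)\in Q_{T}$, the bound $\|u\|_{C^{1}(Q_{T})}\leq K$ ensures $(y,u(y),Du(y))\in Q_{T,K}$, so by (\ref{eq:interior estimate, ellipticity and coeff bound}) one has $\lambda_{K}I\leq\hat A\leq\mu_{K}I$ and $|\hat b|\leq\mu_{K}$ on $Q_{T}$, and $u$ is a classical solution of the linear uniformly elliptic non-divergence equation $-\tr(\hat A\,D^{2}u)+\hat b=0$.

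Fix $V\subset\subset W\subset\subset Q_{T}$. Since $u$ is only of class $C^{2}$, I would differentiate through difference quotients: for a coordinate direction $e_{\ell}$ and $0<|h|<\operatorname{dist}(W,\partial Q_{T})$ (a vacuous restriction when $e_{\ell}$ is a torus direction, and the point at which the distance to $\partial Q_{T}$ enters when $e_{\ell}$ is the time direction), the function $v:=\Delta^{h}_{e_{\ell}}u$ solves on $W$ a linear equation of the form $-\tr(\hat A(\cdot+he_{\ell})\,D^{2}v)+c_{h}\cdot Dv=F_{h}$, in which the coefficient matrix $\hat A(\cdot+he_{\ell})$ inherits the ellipticity bounds $\lambda_{K},\mu_{K}$, while $c_{h}$ and $F_{h}$ are built from $\Delta^{h}_{e_{\ell}}\hat A$, $\Delta^{h}_{e_{\ell}}\hat b$, and the second derivatives of $u$.

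The main obstacle is that $c_{h}$ and $F_{h}$ contain $D^{2}u$, which carries no a priori pointwise bound: differentiating a non-divergence equation unavoidably produces a drift term of size $\sim|D_{q}A|\,|D^{2}u|$ and a right-hand side of size $\sim(|D_{\xo}A|+|D_{z}A|)|D^{2}u|+|Db|$. The classical device of Ladyzhenskaya and Uraltseva for dealing with this is not to estimate these terms pointwise but to carry them through the De Giorgi / Krylov--Safonov iteration on level sets of $v$ with a cut-off, so that they appear paired with derivatives of the truncation and are absorbed, by Cauchy--Schwarz, into the good ellipticity term together with quantities that are then re-expressed through the equation $\tr(\hat A\,D^{2}u)=\hat b$ and the a priori $C^{1}$ bound; this is the delicate step, and the place where the $C^{1}$ (not merely $C^{0}$) regularity of $A$ is essential. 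The outcome is an oscillation-decay estimate $\osc_{B_{r}(y_{0})}v\leq Cr^{\gamma}$, uniform in $h$, for balls $B_{2r}(y_{0})\subset W$, with $\gamma=\gamma(K,\mu_{K}/\lambda_{K})$ given by the De Giorgi exponent and $C$ depending in addition on $\operatorname{dist}(V,\partial Q_{T})^{-1}$ through the covering. Letting $h\to0$ is legitimate because $u\in C^{2}$, so $v\to D_{\ell}u$ locally uniformly on $W$, which gives $[D_{\ell}u]_{\gamma,V}\leq C$; summing over the $d+1$ coordinate directions yields $[Du]_{\gamma,V}\leq C$.
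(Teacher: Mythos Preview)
The paper does not prove this theorem. It is stated in the ``Preliminary results'' subsection as a classical result from the literature, attributed to Ladyzhenskaya and Uraltseva and cited via \cite[Lemma 2.1]{Lieberman}; no argument is given in the paper itself. So there is no ``paper's own proof'' to compare against.

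That said, your sketch follows the broad outline of the classical argument: freeze coefficients, take a difference quotient of the equation, and run a De Giorgi--type oscillation decay on the resulting linear equation for $v=\Delta^{h}_{e_{\ell}}u$. You correctly identify the main obstruction, namely that the drift and source terms coming from differentiating $A(y,u,Du)$ carry a factor of $D^{2}u$ with no pointwise bound. Your description of how this is handled --- ``carried through the iteration with a cut-off and absorbed by Cauchy--Schwarz into the good ellipticity term, re-expressing via $\tr(\hat A D^{2}u)=\hat b$'' --- is, however, too vague to constitute a proof. The actual mechanism in the Ladyzhenskaya--Uraltseva argument is more specific: one works with carefully chosen test functions (or, in the Bernstein-style variant, with an auxiliary function of $|Du|^{2}$) so that the bad $D^{2}u$ terms appear quadratically and can be controlled by the ellipticity term $\tr(D^{2}v\,\hat A\,D^{2}v)$-type quantities that arise naturally in the energy inequality; the equation is used not merely as $\tr(\hat A D^{2}u)=\hat b$ but to convert the full Hessian contribution into something integrable. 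Simply invoking Cauchy--Schwarz is not enough without this structure, since the drift $c_{h}\sim D_{q}A\cdot D^{2}u$ is genuinely unbounded and Krylov--Safonov in its standard form requires bounded (or at least suitably integrable) drift.

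If you want to make this rigorous rather than cite it, the cleanest route is to follow Gilbarg--Trudinger \S13.3 (Theorem 13.6) or the original Ladyzhenskaya--Uraltseva treatment directly, where the bookkeeping of the $D^{2}u$ terms is done explicitly.
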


Next is the following local boundary Hölder estimate for the gradient
in oblique problems, due to Lieberman \cite[Lem. 2.3]{Lieberman}.
In Theorem \ref{thm:Lieberman oblique boundary}, the following definitions
are in place:
\begin{align*}
B & =\{(x,t)\in\mathbb{R}^{d+1}:|(x,t)|<1\},\;B^{+}=\{(x,t)\in B:t>0\},\\
B^{0} & =\{(x,t)\in B:t=0\},\;B'=\{(x,t)\in B^{+}:|(x,t)|<\frac{1}{3}\}.
\end{align*}

\begin{thm}
\label{thm:Lieberman oblique boundary}Let $u\in C^{2}(B^{+}\cup B^{0})$
solve $Qu=0$ in $B^{+}$, $Nu=0$ on $B^{0}$, with $A(x,t,z,q)\in C^{1}(\overline{B^{+}}\times\mathbb{R}\times\mathbb{R}^{d+1})$,
$b(x,t,z,q)\in C^{0}(\overline{B^{+}}\times\mathbb{R}\times\mathbb{R}^{d+1})$,
$B(x,t,z,q)\in C^{1}(B^{0}\times\mathbb{R}\times\mathbb{R}^{d+1}),\;D_{q}B(x,t,z,q)\in C^{1}(B^{0}\times\mathbb{R}\times\mathbb{\mathbb{R}}^{d+1}).$
Assume furthermore that (\ref{eq:interior estimate, ellipticity and coeff bound})
holds in $\overline{B_{K}^{+}}$, as well as, on $B_{K}^{0}$,
\begin{align}
\lambda_{K}\leq & -B_{s},\text{ and}\nonumber \\
\mu_{K}\geq & |D_{q}B|+|D_{z}B|+|D_{\xo}B|+|D_{qq}^{2}B|+|D_{qz}^{2}B|+|D_{q\xo}^{2}B|\label{eq:boundary muK}
\end{align}
Then there are constants $C$ and $\gamma$ depending only on $K$
and $\mu_{K}/\lambda_{K}$ such that, if $||u||_{C^{1}(B^{+}\cup B^{0})}\leq K$,
then 
\[
[Du]_{\gamma,B'}\leq C.
\]
\end{thm}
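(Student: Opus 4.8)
The plan is to reduce the estimate to a boundary oscillation--decay statement for linear oblique problems and combine it with the interior bound of Theorem~\ref{thm:Ladyzhenskaya interior}. Since $B^{0}=\{t=0\}$ is flat, the tangential directions at $B^{0}$ are the space directions $e_{1},\dots,e_{d}$, and $D_{q}B\cdot\nu=-B_{s}\ge\lambda_{K}>0$ is the quantitative obliqueness of $N$. The structural mechanism is that, differentiating $Qu=0$ along a tangential direction $e$ and $Nu=0$ along $B^{0}$, the function $w=D_{e}u$ formally solves a \emph{linear} uniformly elliptic equation $-\tr(A(x,Du)D^{2}w)+\dots=0$ in $B^{+}$ and the linear oblique condition $D_{q}B\cdot Dw+B_{z}w=-D_{\xo}B\cdot e$ on $B^{0}$, whose coefficients $D_{q}B$, $B_{z}$, $D_{\xo}B$ are all bounded by $\mu_{K}$ through (\ref{eq:boundary muK}); moreover, since $-B_{s}\ge\lambda_{K}>0$, the scalar equation $B(\xo,u,Du)=0$ solves for $u_{t}$ as a Lipschitz function of $(\xo,u,D_{x}u)$ on $B^{0}$, hence a H\"older function once $D_{x}u$ is known to be H\"older. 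Granting a boundary oscillation--decay estimate for linear oblique problems of this type, one first bounds the tangential derivatives $D_{e_{i}}u$ near $B^{0}$, then recovers $u_{t}$ from the boundary equation, and finally combines with Theorem~\ref{thm:Ladyzhenskaya interior} by the standard patching argument for boundary gradient estimates, controlling the oscillation of $Du$ in half-balls $B_{r}^{+}(\xo_{0})$, $\xo_{0}\in B^{0}$, by splitting according to the distance to $B^{0}$ relative to $r$.

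The heart of the matter is the boundary analogue of the oscillation--decay lemma behind Theorem~\ref{thm:Ladyzhenskaya interior}: if $w\in C^{2}(B^{+}\cup B^{0})$ solves $-\tr(a\,D^{2}w)+\mathbf{b}\cdot Dw=F$ in $B^{+}$ with $\lambda_{K}I\le a\le\mu_{K}I$ and $|\mathbf{b}|,|F|\le\mu_{K}$, together with $\beta\cdot Dw+\beta_{0}w=G$ on $B^{0}$ with $\beta\cdot\nu\ge\lambda_{K}$ and $|\beta|,|\beta_{0}|,|G|\le\mu_{K}$, then $[w]_{\gamma,B'}\le C(||w||_{L^{\infty}(B^{+})}+\mu_{K})$ with $\gamma,C$ depending only on $K$ and $\mu_{K}/\lambda_{K}$. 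I would prove this by running the Krylov--Safonov growth lemma / weak Harnack iteration in half-balls up to $B^{0}$: the obliqueness $\beta\cdot\nu\ge\lambda_{K}>0$ permits the construction of boundary barriers that control the oblique flux, while the boundedness of the zeroth-order coefficient $\beta_{0}=B_{z}$ lets its contribution be absorbed. This is precisely why the hypotheses $-B_{s}\ge\lambda_{K}$ and the first- and second-order bounds in (\ref{eq:boundary muK}) are imposed.

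The main obstacle, already present in the interior Theorem~\ref{thm:Ladyzhenskaya interior}, is that $w=D_{e}u$ does \emph{not} literally solve a linear equation with acceptable coefficients: differentiating the principal coefficient $A(x,Du(x))$ produces terms of first order in $D^{2}u$ --- appearing both as a source, of the form $\tr(A_{x}(x,Du)D^{2}u)$, and as a coefficient of $Dw$, of the form $\tr(D_{q}A(x,Du)\,D^{2}u)$ --- which the a priori bound $||u||_{C^{1}}\le K$ does not tame. Following the Ladyzhenskaya--Uraltseva method, one therefore does not differentiate the equation; instead one works with difference quotients of $u$, or with the scalar $v=|Du|^{2}$, and uses uniform ellipticity to \emph{absorb} the uncontrolled $D^{2}u$-terms, so that $v$ and its increments lie in a suitable De Giorgi class both in $B^{+}$ and up to the oblique boundary $B^{0}$, where the boundary contribution to the De Giorgi inequalities is handled exactly as in the linear prototype above, via the obliqueness and the bounds (\ref{eq:boundary muK}). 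Carrying this energy argument all the way to a flat oblique boundary, in tandem with the barrier construction, is the step I expect to be most delicate; once it is in place, recovering $u_{t}$ from $B(\xo,u,Du)=0$ and patching with Theorem~\ref{thm:Ladyzhenskaya interior} deliver $[Du]_{\gamma,B'}\le C$ with the stated dependence.
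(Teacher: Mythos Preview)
The paper does not prove this theorem; it is stated in Section~\ref{sec:Assumptions-and-general} as a preliminary result quoted from the literature, specifically \cite[Lemma 2.3]{Lieberman}, and is used as a black box in Lemma~\ref{lem:a priori C1,gamma}. There is therefore no proof in the paper to compare your proposal against.

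That said, your outline is a reasonable sketch of the mechanism behind Lieberman's result: tangential differentiation of the boundary condition yields a linear oblique relation for the tangential derivatives, the obliqueness $-B_{s}\ge\lambda_{K}$ allows one to recover $u_{t}$ on $B^{0}$ once $D_{x}u$ is controlled, and the interior estimate of Theorem~\ref{thm:Ladyzhenskaya interior} handles the region away from $B^{0}$. You are also right that the main technical obstruction is that differentiating $Qu=0$ produces $D^{2}u$--dependent coefficients not controlled by the $C^{1}$ bound, and that this is circumvented by working in a De Giorgi framework rather than with the literally differentiated equation. However, what you have written is a high-level plan rather than a proof: the boundary oscillation--decay lemma you state is itself the substantive content of Lieberman's argument, and the phrase ``carrying this energy argument all the way to a flat oblique boundary\ldots is the step I expect to be most delicate'' concedes that the core step is not carried out. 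If the intent is to supply an independent proof rather than cite \cite{Lieberman}, that step must be executed in full; otherwise the appropriate course, as in the paper, is simply to invoke the reference.
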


For the next theorem, which is the basic Schauder estimate for linear
oblique problems \cite[Thm. 6.30]{GilbargTrudinger}, we recall that
$\nu(x,t)=\pm(0,0,\ldots,1)$ denotes the outward pointing normal
vector at $(x,t)\in\partial Q_{T}$.
\begin{thm}
\label{thm:oblique Schauder}Assume that $u\in C^{2}(\overline{Q_{T}})$
solves the linear problem
\[
-\emph{\text{Tr}}(\tilde{A}(x,t)D^{2}u)=\eta_{1}(x,t)\text{ in }Q_{T},\;\;\;\tilde{B}(x,t)\cdot Du=\eta_{2}(x,t)\text{ on }\partial Q_{T},
\]
where 
\[
\tilde{A^{ij}},\eta_{1}\in C^{0,\alpha}(\overline{Q_{T}}),\;\;\tilde{B},\eta_{2}\in C^{1,\alpha}(\partial Q_{T}),\;\;\tilde{A}\geq\lambda I,\text{ and }\tilde{B}\cdot\nu\geq\lambda_{0}.
\]
 Then there exists $C=C(\frac{1}{\lambda},\frac{1}{\lambda_{0}},||\tilde{A}^{ij}||_{C^{0,\alpha}(\overline{Q_{T}})},||\tilde{B}||_{C^{1,\alpha}(\partial Q_{T})})$
such that
\[
||u||_{C^{2,\alpha}(\overline{Q_{T}})}\leq C(||u||_{C^{0}(\overline{Q_{T}})}+||\eta_{1}||_{C^{0,\alpha}(\overline{Q_{T}})}+||\eta_{2}||_{C^{1,\alpha}(\partial Q_{T})}).
\]
\end{thm}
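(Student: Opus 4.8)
The plan is to prove this by the classical two-step Schauder method: localize the estimate via a finite partition of unity into interior balls and boundary half-balls, establish the estimate on each piece, and sum up. On the interior pieces the desired bound is exactly the classical interior Schauder estimate for the non-divergence equation $-\tr(\tilde A D^2 u)=\eta_1$ (see e.g.\ \cite[Chapter 6]{GilbargTrudinger}), so all of the real work lies in the boundary estimate near the two flat faces $\mathbb{T}^d\times\{0\}$ and $\mathbb{T}^d\times\{T\}$, where the oblique condition $\tilde B\cdot Du=\eta_2$ is imposed.

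For the boundary estimate I would first reduce to a constant-coefficient model problem by changes of variables anchored at a fixed base point $\xo_0\in\partial Q_T$. Since on $\partial Q_T$ the outward normal is $\pm e_{d+1}$, the obliqueness hypothesis $\tilde B\cdot\nu\ge\lambda_0$ forces the $t$-component of $\tilde B$ to satisfy $|\tilde B_t|\ge\lambda_0$; writing $\tilde B=(\tilde B_x,\tilde B_t)$, the affine shear $(x,t)\mapsto(x-t\,\tilde B_x(\xo_0)/\tilde B_t(\xo_0),\,t)$ preserves the flatness of the face and turns the operator $\tilde B(\xo_0)\cdot D$ into the pure normal derivative $\pm\partial_t$; hence, after this change, the boundary condition becomes a Neumann condition up to terms that are products of $Du$ with coefficients of small $C^{0,\alpha}$-oscillation, plus $\eta_2$. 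Freezing the principal coefficient $\tilde A$ at $\xo_0$ and diagonalizing it by a further linear map reduces the principal part to the Laplacian. I would then solve the resulting constant-coefficient Neumann problem on a half-ball by subtracting the Newtonian potential of $\eta_1$ together with a harmonic extension of the $C^{1,\alpha}$ boundary data, and use even reflection across the face to make the remainder harmonic, so that the trivial interior estimates for harmonic functions apply. This gives, for the frozen and flattened problem, a bound $[D^2u]_{\alpha}\le C\big(\|\eta_1\|_{C^{0,\alpha}}+\|\eta_2\|_{C^{1,\alpha}}+\|u\|_{C^1}\big)$ on a smaller concentric half-ball, with $C$ depending only on $1/\lambda$, $1/\lambda_0$, and the H\"older norms of $\tilde A$ and $\tilde B$.

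The passage from the frozen problem back to the variable-coefficient one is the standard freezing-of-coefficients perturbation: on a ball of radius $r$ one has $|\tilde A(\xo)-\tilde A(\xo_0)|\le[\tilde A]_{\alpha}r^{\alpha}$ and $|\tilde B(\xo)-\tilde B(\xo_0)|\le[\tilde B]_{\alpha}r^{\alpha}$, so the extra terms produced by moving these differences to the right-hand side are absorbed after an iteration over dyadic scales --- equivalently, via a Campanato-type decay estimate for the mean oscillation of $D^2u$. Summing the interior and boundary estimates over the finite cover of $\overline{Q_T}$ then yields $\|u\|_{C^{2,\alpha}(\overline{Q_T})}\le C\big(\|u\|_{C^1(\overline{Q_T})}+\|\eta_1\|_{C^{0,\alpha}(\overline{Q_T})}+\|\eta_2\|_{C^{1,\alpha}(\partial Q_T)}\big)$, and a standard interpolation inequality (\cite[Lemma 6.32]{GilbargTrudinger}) replaces $\|u\|_{C^1}$ by $\|u\|_{C^{0,\alpha}}$ at the cost of a small multiple of $\|u\|_{C^{2,\alpha}}$, which is absorbed into the left-hand side.

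I expect the boundary estimate to be the main obstacle, in two respects: straightening the oblique vector field while keeping track of how the obliqueness constant $\lambda_0$ propagates into every subsequent constant, and arranging the perturbation argument at the boundary so that it simultaneously controls the $C^{0,\alpha}$ oscillation of the principal coefficients $\tilde A$ and the $C^{1,\alpha}$ oscillation of the oblique vector field $\tilde B$ --- the latter being exactly why $\tilde B$ and $\eta_2$ are assumed one derivative more regular than $\tilde A$ and $\eta_1$. The remaining steps --- localization, interior Schauder, and interpolation --- are routine.
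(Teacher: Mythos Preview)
The paper does not prove this statement at all: it is listed among the ``preliminary results'' and simply cited from \cite[Theorem 6.30]{GilbargTrudinger}. Your proposal is therefore not a comparison against the paper's argument but rather a self-contained sketch of the classical proof.

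As such a sketch, your outline is essentially the right one --- localize, freeze coefficients, straighten the oblique direction to a Neumann condition, reflect, and perturb back --- and it correctly identifies why $\tilde B$ and $\eta_2$ need one more derivative than $\tilde A$ and $\eta_1$. For the purposes of this paper, however, none of this is needed: the result is quoted as a black box, and your time would be better spent simply invoking the reference.
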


The last result of this subsection is a variant of a convergence theorem
of Fiorenza, which is a basic tool for using the method of continuity
without the need of a priori second derivative estimates \cite[Lem. 2, Cor. 1]{Lieberman solvability}.
\begin{thm}
\label{thm:Fiorenza convergence}Let $0<\alpha,\gamma<1$. For each
$n\in\mathbb{N}$, let $u_{n}\in C^{2,\alpha}(\overline{Q_{T}})$
be a sequence of solutions to the quasilinear problems $Q_{n}u=0$,
$N_{n}u=0$, where, for $C,K,\gamma,\lambda,\lambda_{0}$ independent
of $n,$
\begin{align*}
Q_{n}u & =-\emph{\text{Tr}}(A_{n}(x,t,u,Du)D^{2}u)+b_{n}(x,t,u,Du),\;\;N_{n}u=B_{n}(x,t,u,Du),
\end{align*}
\vspace*{-1.2cm}

\begin{align*}
||A_{n}||_{C^{1}(\overline{Q}_{T,K})}+||b_{n}||_{C^{1}(\overline{Q}_{T,K})}+||B_{n}||_{C^{2}(\overline{Q}_{T,K})}+||D_{q}B_{n}||_{C^{2}(\overline{Q}_{T,K})} & \leq C,
\end{align*}
\vspace*{-0.67cm}
\begin{align*}
A_{n} & \geq\lambda I\text{ in }\overline{Q}_{T,K},\;\text{ and }\;D_{q}B_{n}\cdot\nu\geq\lambda_{0}\text{ in }\partial\overline{Q}_{T,K},
\end{align*}
\[
||u_{n}||_{C^{1+\gamma}(\QT)}\leq K,
\]
with $u_{n}\rightarrow u$ uniformly, and $(A_{n},$$b_{n},B_{n})\rightarrow(A,b,B)$
uniformly on $\overline{Q}_{T,K}$. Then $u_{n}\rightarrow u$ in
$C^{2,\alpha}(\overline{Q_{T}})$, and $u$ solves (\ref{eq:ellip}).
\end{thm}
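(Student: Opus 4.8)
The plan is to reduce the whole statement to a single uniform a priori estimate — that $\|u_{n}\|_{C^{2,\alpha}(\QT)}$ is bounded independently of $n$ — and then to extract everything by soft compactness. Granting such a bound, since the embedding $C^{2,\alpha}(\QT)\subset C^{2}(\QT)$ is compact, any subsequence of $\{u_{n}\}$ has a further subsequence converging in $C^{2}(\QT)$; because $u_{n}\to u$ uniformly by hypothesis, every such limit coincides with $u$, so in fact $u_{n}\to u$ in $C^{2}(\QT)$. Passing to the limit in $Q_{n}u_{n}=0$ in $Q_{T}$ and $N_{n}u_{n}=0$ on $\partial Q_{T}$ is then immediate: since $\|u_{n}\|_{C^{1}}\le K$, the arguments $(x,t,u_{n},Du_{n})$ remain in a fixed compact subset of $\QT\times\mathbb{R}\times\mathbb{R}^{d+1}$ on which $(A_{n},b_{n},B_{n})\to(A,b,B)$ uniformly, while $(u_{n},Du_{n},D^{2}u_{n})\to(u,Du,D^{2}u)$ uniformly; hence $Qu=0$ and $Nu=0$, i.e. $u$ solves (\ref{eq:ellip}).

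To obtain the uniform $C^{2,\alpha}$ bound I would freeze the coefficients along each solution. With $\tilde{A}_{n}(x,t):=A_{n}(x,t,u_{n},Du_{n})$ and $\tilde{b}_{n}(x,t):=b_{n}(x,t,u_{n},Du_{n})$, the $C^{1}$ bounds on $A_{n},b_{n}$ together with $\|u_{n}\|_{C^{1,\gamma}}\le K$ give $\tilde{A}_{n},\tilde{b}_{n}$ uniformly bounded in $C^{0,\gamma}(\QT)$, with $\tilde{A}_{n}\ge\lambda I$, so each $u_{n}$ solves a uniformly elliptic linear equation with uniformly Hölder data. For the boundary condition I would use $D_{q}B_{n}\cdot\nu\ge\lambda_{0}$ and $\|D_{q}B_{n}\|_{C^{2}}\le C$ to linearize $N_{n}u_{n}=0$: integrating $D_{q}B_{n}$ along the segment joining $0$ and $Du_{n}$ rewrites the condition as $\beta_{n}(x,t)\cdot Du_{n}=\eta_{n}(x,t)$ on $\partial Q_{T}$, with $\beta_{n}(x,t)=\int_{0}^{1}D_{q}B_{n}(x,t,u_{n},\theta Du_{n})\,d\theta$, $\eta_{n}(x,t)=-B_{n}(x,t,u_{n},0)$, and $\beta_{n}\cdot\nu\ge\lambda_{0}>0$. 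The interior and boundary gradient–Hölder estimates (Theorem \ref{thm:Ladyzhenskaya interior} and Theorem \ref{thm:Lieberman oblique boundary}) only re-confirm the $C^{1,\gamma}$ bound already assumed; the genuinely new ingredient is the linear oblique Schauder estimate (Theorem \ref{thm:oblique Schauder}) applied to this frozen problem, which should promote the $C^{1,\gamma}$ bound to a $C^{2,\alpha}$ bound.

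The main obstacle — and the reason this is a \emph{variant} of Fiorenza's theorem rather than a one-line corollary of Theorem \ref{thm:oblique Schauder} — is that the linearized oblique operator cannot be controlled uniformly by this naive freezing: $\|\beta_{n}\|_{C^{1,\alpha}(\partial Q_{T})}$ and $\|\eta_{n}\|_{C^{1,\alpha}(\partial Q_{T})}$ a priori involve $D^{2}u_{n}$, since \emph{any} reduction of the first-order condition $B_{n}(x,t,u,Du)=0$ to a linear oblique condition reproduces the solution's second derivatives, so Theorem \ref{thm:oblique Schauder} cannot be invoked with $n$-independent constants. Breaking this circularity is the heart of the matter: following \cite[Lemma 2, Corollary 1]{Lieberman solvability}, one carries out a perturbation (Campanato-type) proof of the boundary Schauder estimate directly for the quasilinear oblique problem — freezing the boundary operator at a boundary point rather than along $u_{n}$, and using \emph{only} the uniform $C^{1,\gamma}$ bound to render the frozen-at-a-point error small enough to iterate — which yields the uniform $C^{2,\alpha}$ estimate with no prior second-derivative bound; the hypothesis $D_{q}B_{n}\in C^{2}$ is exactly what makes this linearization and iteration go through in the oblique setting. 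Once the uniform $C^{2,\alpha}$ bound is available, the compactness-and-stability argument of the first paragraph concludes the proof, the convergence being in $C^{2,\alpha}(\QT)$ — and not merely in $C^{2,\beta}(\QT)$ for $\beta<\alpha$ — by feeding the differences $u_{n}-u$, which solve linear oblique problems with data tending to $0$ in the relevant norms, into the same linear estimate.
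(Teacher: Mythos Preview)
The paper does not supply its own proof of Theorem~\ref{thm:Fiorenza convergence}; it is stated as a preliminary result and attributed to \cite[Lemma 2, Corollary 1]{Lieberman solvability}. Your sketch is a faithful outline of that reference's argument --- reduce to a uniform $C^{2,\alpha}$ bound, note the circularity in applying Theorem~\ref{thm:oblique Schauder} directly to the frozen oblique boundary operator, and break it via a Campanato/perturbation estimate at the boundary that uses only the $C^{1,\gamma}$ control --- so there is nothing in the paper itself to compare against beyond the citation, with which your outline is consistent.
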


\section{A priori estimates\label{sec:A-priori-estimates}}

In this section, we establish a priori estimates for the solution
and the gradient, in the case where (\ref{eq:mfg}) is strictly elliptic.
To account for the fact that the functions $f$ and $g$ depend on
the space variable, we will make extensive use of the continuous,
strictly increasing functions $f_{0},g_{0},f_{1},g_{1}:(0,\infty)\rightarrow\mathbb{R}$
defined by
\begin{align*}
f_{0}(m)=\min_{\mathbb{T}^{d}}f(\cdot,m),\;\;g_{0}(m)=\min_{\mathbb{T}^{d}}g(\cdot,m),\;\;f_{1}(m)=\max_{\mathbb{T}^{d}}f(\cdot,m),\;\;g_{1}(m)=\max_{\mathbb{T}^{d}}g(\cdot,m).
\end{align*}

\subsection{Estimates for the solution and the terminal density\label{subsec:Estimates sol}}

We first obtain a priori bounds for the $C^{0}$ norm of the solution
$u$. As a corollary, positive, two-sided bounds for the terminal
density are established. 
\begin{lem}
\label{lem:aprioriu}Assume that (\ref{eq:SE}) holds. Then, there
exists a constant $C=C(C_{0})$ such that for any solution $(u,m)\in C^{2}(\overline{Q_{T}})\times C^{1}(\overline{Q_{T}})$
of (\ref{eq:mfg}), and every $(x,t)\in\overline{Q_{T}},$ 
\begin{equation}
g_{0}f_{1}^{-1}(-C)-C(e^{CT}-e^{Ct})\leq u(x,t)\leq g_{1}f_{0}^{-1}(C)+C(e^{CT}-e^{Ct}).\label{aprioriu1}
\end{equation}
\end{lem}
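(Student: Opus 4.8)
\textbf{Proof plan.} The plan is to compare $u$ against explicit one-variable barriers, using the elliptic reformulation \eqref{eq:ellip}, which $u$ satisfies by the equivalence recalled above (note that under \eqref{eq:SE} any such solution has $m>0$ on $\QT$, so all the quantities below are finite). For the upper bound I would fix a parameter $K\ge1$, depending only on $C_{0}$ and $\tau$ and large enough for the estimates that follow, and set
\[
\psi(t)=A+K\bigl(e^{KT}-e^{Kt}\bigr),\qquad A:=g_{1}\!\bigl(f_{0}^{-1}(K^{2}e^{KT}+C_{0})\bigr),
\]
so that $D_{x}\psi\equiv0$, $\psi'=-K^{2}e^{Kt}$, $\psi''=-K^{3}e^{Kt}$. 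The claim is that $u\le\psi$ on $\QT$. To see this, let $\xo_{0}=(x_{0},t_{0})$ be a point where $u-\psi$ attains its maximum over $\QT$; since $\mathbb{T}^{d}$ has no spatial boundary, $D_{x}u(\xo_{0})=0$, so $H(x_{0},D_{x}u(\xo_{0}))=H(x_{0},0)$ with $|H(x_{0},0)|\le C_{0}$ by the Remark.

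One preliminary estimate is needed: $|b(x,0,s)|\le\tilde C(1+|s|)$ for all $(x,s)$, with $\tilde C=\tilde C(C_{0},\tau)$. Indeed, setting $p=0$ in \eqref{eq:first order term} and writing $w=-s+H(x,0)$ (so $|w|\le|s|+C_{0}$), the first and third terms of $b$ are bounded by the estimates on $D_{x}H,D_{p}H,D_{xp}^{2}H$ from the Remark together with $0\le\chi(x,w)\le C_{0}(1+|w|)$ (a consequence of $\|\chi(\cdot,0)\|_{C^{0}}\le C_{0}$ and $|\chi_{w}|\le C_{0}$), while the middle term is bounded via \eqref{eq:fx, fxx bound}, since $|D_{x}f(x,f^{-1}(x,w))|\le C_{0}\bigl(1+|w|^{\tau/2}+\chi(x,w)^{(1+\tau)/2}\bigr)\le\tilde C(1+|w|)$. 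With this in hand I rule out the maximum being interior or on $\{t=0\}$. If $t_{0}\in(0,T)$, then $u_{t}(\xo_{0})=\psi'(t_{0})$ and $D^{2}u(\xo_{0})\le D^{2}\psi(\xo_{0})=\diag(0,\dots,0,\psi''(t_{0}))$; since $A$ is positive semidefinite with $A_{d+1,d+1}\equiv1$ (see \eqref{eq:matrix}), this gives $\tr(A\,D^{2}u)\le\psi''(t_{0})=-K^{3}e^{Kt_{0}}$ at $\xo_{0}$, whereas \eqref{eq:ellip} at $\xo_{0}$ gives $\tr(A\,D^{2}u)=b(x_{0},0,\psi'(t_{0}))\ge-\tilde C(1+K^{2}e^{Kt_{0}})$; since $e^{Kt_{0}}\ge1$, this forces $K^{2}(K-\tilde C)\le\tilde C$, impossible for $K\ge\tilde C+1$. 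If $t_{0}=0$, the boundary relation $-u_{t}+H(x,D_{x}u)=f(x,m_{0}(x))$ together with $D_{x}u(\xo_{0})=0$ gives $u_{t}(x_{0},0)=H(x_{0},0)-f(x_{0},m_{0}(x_{0}))$, so $|u_{t}(x_{0},0)|\le2C_{0}$ by the Remark; but maximality forces $u_{t}(x_{0},0)\le\psi'(0)=-K^{2}$, impossible for $K^{2}>2C_{0}$.

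Hence the maximum cannot be attained in $Q_{T}$ or on $\{t=0\}$, so it is attained at some $(x_{0},T)$, where $u_{t}(x_{0},T)\ge\psi'(T)$ and, if the maximum value is positive, $u(x_{0},T)>\psi(T)=A$. On $\{t=T\}$ one has $-\psi'(T)+H(x,0)\le K^{2}e^{KT}+C_{0}$, hence $f^{-1}(x,-\psi'(T)+H(x,0))\le f_{0}^{-1}(K^{2}e^{KT}+C_{0})$, and with $g(x,\cdot)\le g_{1}(\cdot)$ this gives $N\psi\ge-A+\psi(T)=0$ there. Since $B(x_{0},T,z,0,s)=-g\bigl(x_{0},f^{-1}(x_{0},-s+H(x_{0},0))\bigr)+z$ is strictly increasing in $z$ and nondecreasing in $s$ (the Robin structure of the terminal condition), we obtain
\[
0=B\bigl(x_{0},T,u(x_{0},T),0,u_{t}(x_{0},T)\bigr)>B\bigl(x_{0},T,\psi(T),0,\psi'(T)\bigr)=N\psi(\xo_{0})\ge0,
\]
a contradiction. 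Therefore $u\le\psi$ on $\QT$. Setting $C:=K^{2}e^{KT}+C_{0}$, which depends only on $C_{0},\tau,T$ and satisfies $K\le C$, and using that $\lambda\mapsto\lambda(e^{\lambda T}-e^{\lambda t})$ is nondecreasing for $0\le t\le T$, I conclude $u(x,t)\le A+K(e^{KT}-e^{Kt})\le g_{1}f_{0}^{-1}(C)+C(e^{CT}-e^{Ct})$. The lower bound follows by the symmetric argument applied to $\phi(t)=A'-K(e^{KT}-e^{Kt})$ with $A':=g_{0}\bigl(f_{1}^{-1}(-K^{2}e^{KT}-C_{0})\bigr)$: this $\phi$ is a subsolution of \eqref{eq:ellip} with $N\phi\le0$ on $\partial Q_{T}$ (now using $f^{-1}(x,w)\ge f_{1}^{-1}(w)$ and $g(x,\cdot)\ge g_{0}(\cdot)$ on $\{t=T\}$), and the same three-case analysis applied to a negative minimum of $u-\phi$ gives a contradiction.

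The main obstacle is the terminal face $\{t=T\}$: there $u$ satisfies only the fully nonlinear relation $u=g\bigl(x,f^{-1}(x,-u_{t}+H)\bigr)$ rather than a Dirichlet condition, so one cannot compare boundary values directly. What saves the argument is precisely that this relation is monotone increasing in $z=u$ — the Robin/oblique structure, which is where the strict monotonicity $g_{m}>0$ of \hyperref[eq: gx control]{(G)} enters — so that the barrier inequality $N\psi\ge0$ upgrades to the strict inequality $0>N\psi(\xo_{0})\ge0$ at the contact point. A secondary point requiring care is the bookkeeping that yields a single constant $C=C(C_{0},\tau,T)$: one fixes the barrier's growth rate $K$ from $C_{0},\tau$ alone and only afterwards absorbs the $T$-dependence into $C=K^{2}e^{KT}+C_{0}$.
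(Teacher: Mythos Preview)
Your proof is correct and follows essentially the same route as the paper's: both arguments build a barrier depending only on $t$ with exponential profile, use the estimate on $b(x,Du)$ together with $A_{d+1,d+1}\equiv 1$ to rule out an interior extremum, use the initial condition $-u_t+H=f(\cdot,m_0)$ to rule out $t=0$, and then read off the bound at $t=T$ from the monotonicity of the terminal relation $u=g(\cdot,f^{-1}(\cdot,-u_t+H))$. The only cosmetic difference is packaging: the paper shifts $u$ by $\zeta(t)$ and studies where $v=u+\zeta$ is maximized, whereas you compare $u$ directly against $\psi(t)$ and study the maximum of $u-\psi$; with $\psi=-\zeta+\text{const}$ these are the same computation.
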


\begin{proof}
The goal here is to modify $u$ into a function that necessarily achieves
its maximum at $\{t=T\}$, which is the region of the boundary where,
by the strict monotonicity of $g$, the boundary condition of (\ref{eq:ellip})
provides information about $u$. This requires some estimates for
the terms in (\ref{eq:first order term}). By (\ref{eq: Hxp bound})
and (\ref{growth chi}), 
\begin{equation}
|\chi(x,f)\tr(D_{xp}^{2}H(x,D_{x}u))|\leq C(1+|f|)(1+|D_{x}u|^{\tau}).\label{bound tr(Hpx)}
\end{equation}
Moreover, by (\ref{eq:fx, fxx bound}),
\begin{align}
|D_{x}f(x,m(x,t))\cdot D_{p}H(x,D_{x}u)| & \leq C(1+|f|^{(1+\tau)/2})(1+|D_{x}u|).\label{eq:}
\end{align}
Now, given $u,$ define the linear, uniformly elliptic operator $Q_{u}$
by
\[
Q_{u}v=-\tr(A(x,Du)D^{2}v).
\]
Notice that $Q_{u}u=-b(x,Du)$. Let $\zeta\in C^{2}([0,T])$ be a
function to be chosen later, and define
\[
v=u+\zeta(t),
\]
so that $v_{t}=u_{t}+\zeta'(t)$ and $D_{x}v=D_{x}u$. This yields,
by (\ref{eq: H>=00003D|p|^2-C}), (\ref{eq: Hx, Hxx bound}), (\ref{bound tr(Hpx)}),
and (\ref{eq:}),
\begin{align*}
Q_{u}v= & -\zeta''(t)+D_{x}H(x,D_{x}v)\cdot D_{p}H(x,D_{x}v)-D_{x}f(x,m)\cdot D_{p}H(x,D_{x}v)+\chi\tr(D_{xp}^{2}H(x,D_{x}v))\\
\leq & -\zeta''(t)+C(1+|D_{x}v|^{1+\tau})(1+|D_{x}v|)+C(1+|-v_{t}+H(x,D_{x}v)+\zeta'(t)|^{(1+\tau)/2})(1+|D_{x}v|)\\
 & +C(1+|-v_{t}+H(x,D_{x}v)+\zeta'(t)|)(1+|D_{x}v|^{\tau})\\
\leq & -\zeta''(t)+C(1+|D_{x}v|^{3}+|v_{t}|^{2})+C(1+|D_{x}v|)|\zeta'(t)|,
\end{align*}
where the constant $C$ increases in each line. Now, set $C_{1}=2C$
and fix $C_{1}$, still allowing $C$ to increase at each step. We
choose $\zeta(t)=\frac{k}{2C_{1}}(e^{2C_{1}t}-e^{2C_{1}T})$, where
$k>0$ is a parameter. Then, 
\[
\zeta'(t)=ke^{2C_{1}t},\;\zeta''(t)=2C_{1}|\zeta'(t)|,
\]
and, consequently, at any interior maximum point $(x,t)$ of $v$,
\begin{align*}
0\leq Q_{u}v\leq-\zeta''(t)+C_{1}(1+|\zeta'(t)|)\leq-C_{1}\zeta'(t)+C_{1}=-C_{1}ke^{2C_{1}t}+C_{1}\leq-C_{1}k+C_{1},
\end{align*}
which can only hold if $k\leq1$. Thus, if one chooses $k>1$, $v$
necessarily achieves its maximum value when $t=0$ or $t=T$. If this
happens at a point $(x,t)$ where $t=0$, then $u_{t}+\zeta'=v_{t}\leq0$,
$D_{x}u=D_{x}v=0$. Therefore,
\begin{align*}
-||H(\cdot,0)||_{C^{0}(\mathbb{T}^{d})} & \leq-v_{t}+H(x,0)=-u_{t}+H(x,D_{x}u)-\zeta'(0)=f(x,m_{0}(x,t))-\zeta'(0),
\end{align*}
implying that
\[
k=\zeta'(0)\leq f(x,m_{0}(x,t))+||H(\cdot,0)||_{C^{0}(\mathbb{T}^{d})}.
\]
Hence, taking $k>\max_{x\in\mathbb{T}^{d}}f(x,m_{0}(x))+||H(\cdot,0)||_{C^{0}(\mathbb{T}^{d})}$,
it follows that $v$ attains its maximum value at $t=T$. At this
point, $u_{t}+\zeta'(t)=v_{t}\geq0$, $D_{x}u=D_{x}v=0$, and, as
before,
\begin{align*}
||H(x,0)||_{C^{0}}\geq-v_{t}+H(x,0)=f(x,m(x,T))-\zeta'(T),
\end{align*}
which gives
\begin{align*}
f_{0}(m(x,T))\leq f(x,m(x,T))\leq\zeta'(T)+||H(\cdot,0)||_{C^{0}(\mathbb{T}^{d})}\leq ke^{2C_{1}T}+||H(\cdot,0)||_{C^{0}(\mathbb{T}^{d})}\leq C.
\end{align*}
Thus, since $u(x,T)=v(x,T)$, taking into account the surjectivity
of $f(x,\cdot)$,
\begin{align*}
\max v & =v(x,T)=g(x,m(x,T))\leq g(x,f_{0}^{-1}(C))\leq g_{1}(f_{0}^{-1}(C)).
\end{align*}
Finally, for arbitrary $(x,t)\in Q_{T},$
\begin{align*}
u(x,t) & =v(x,t)-\zeta(t)\leq g_{1}(f_{0}^{-1}(C))+C(e^{CT}-e^{Ct}).
\end{align*}
The lower estimate follows from a completely symmetrical argument.
\end{proof}
\begin{cor}
\label{cor:apriorim}Assume (\ref{eq:SE}), and let $C$ be the constant
from Lemma \ref{lem:aprioriu}. Then, for every $x\in\mathbb{T}^{d},$
\begin{align}
g_{1}^{-1}g_{0}f_{1}^{-1}(-C)\leq & m(x,T)\leq g_{0}^{-1}g_{1}f_{0}^{-1}(C),\label{eq:a priori m(T) inequality}
\end{align}
\end{cor}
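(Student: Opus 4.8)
The plan is to derive the two--sided bound on $m(x,T)$ directly from the terminal boundary condition $u(x,T)=g(x,m(x,T))$ together with the a priori $C^{0}$ bounds on $u$ already established in Lemma \ref{lem:aprioriu}. The key observation is that, under \hyperref[eq:SE]{(SE)}, the coupling $f$ blows up to $-\infty$ at $m=0$, so that $f(x,\cdot):(0,\infty)\to\mathbb{R}$ is a surjective (in fact bijective, being strictly increasing) map for each fixed $x$, and likewise, from the proof of Lemma \ref{lem:aprioriu}, the terminal density $m(\cdot,T)$ is bona fide positive, so all the inverse functions $f_{0}^{-1},f_{1}^{-1},g_{0}^{-1},g_{1}^{-1}$ that appear are well defined on the relevant ranges.

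First I would record that, evaluating the upper estimate of \eqref{aprioriu1} at $t=T$ (where the exponential correction term vanishes), we get $u(x,T)\leq g_{1}(f_{0}^{-1}(C))$; but $u(x,T)=g(x,m(x,T))\geq g_{0}(m(x,T))$, so $g_{0}(m(x,T))\leq g_{1}(f_{0}^{-1}(C))$. Since $g_{0}$ is continuous and strictly increasing, hence invertible on its range, applying $g_{0}^{-1}$ to both sides yields $m(x,T)\leq g_{0}^{-1}g_{1}(f_{0}^{-1}(C))$, which is exactly the right-hand inequality in \eqref{eq:a priori m(T) inequality}. Symmetrically, the lower estimate in \eqref{aprioriu1} at $t=T$ gives $u(x,T)\geq g_{0}(f_{1}^{-1}(-C))$, while $u(x,T)=g(x,m(x,T))\leq g_{1}(m(x,T))$; combining and applying $g_{1}^{-1}$ gives $m(x,T)\geq g_{1}^{-1}g_{0}(f_{1}^{-1}(-C))$, the left-hand inequality.

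I expect this to be essentially a one-paragraph corollary with no real obstacle — the only point requiring a word of care is the legitimacy of inverting $g_{0},g_{1},f_{0},f_{1}$ on the arguments that show up, i.e.\ confirming that $C$ lies in the range of $f_{0}$ (equivalently that $f_{0}^{-1}(C)$ makes sense), that $-C$ lies in the range of $f_{1}$, and that $g_{1}(f_{0}^{-1}(C))$ and $g_{0}(f_{1}^{-1}(-C))$ lie in the ranges of $g_{0}$ and $g_{1}$ respectively. Under \hyperref[eq:SE]{(SE)} we have $f(\cdot,0)\equiv-\infty$, so $f_{0},f_{1}$ map onto $\mathbb{R}$ and the first two are automatic; for the $g$--inverses one uses that $m(\cdot,T)$ is a strictly positive function (implicit in the proof of Lemma \ref{lem:aprioriu}, where $f(x,m(x,T))$ is a finite real number), so that $u(x,T)=g(x,m(x,T))$ lies strictly between $g(x,0)$ and $\sup g$, hence in the domains of $g_{0}^{-1}$ and $g_{1}^{-1}$. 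With these remarks in place the chain of inequalities above closes and gives \eqref{eq:a priori m(T) inequality}.
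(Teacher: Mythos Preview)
Your argument is correct and follows essentially the same route as the paper: evaluate \eqref{aprioriu1} at $t=T$, sandwich $g(x,m(x,T))$ between $g_{0}(m(x,T))$ and $g_{1}(m(x,T))$, and invert. One small point: when you justify that $g_{1}(f_{0}^{-1}(C))$ lies in the range of $g_{0}$ (and symmetrically), the relevant fact is not that $u(x,T)$ lies in the domain of $g_{0}^{-1}$, but rather that $g_{0}$ and $g_{1}$ have the \emph{same} range --- this is exactly the content of assumption \eqref{eq: gx control}, which the paper invokes explicitly at this step.
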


\begin{proof}
From the first inequality in (\ref{aprioriu1}), for each $x\in\mathbb{T}^{d},$
\[
g_{0}f_{1}^{-1}(-C)\leq g(x,m(x,T)),
\]
and thus, by definition of $g_{1}$, 
\begin{equation}
g_{0}f_{1}^{-1}(-C)\leq g_{1}(m(x,T)).\label{eq:  cor a priori m (inter)}
\end{equation}
Observe that the application of $g_{1}^{-1}$ on both sides of (\ref{eq:  cor a priori m (inter)})
is possible because, by (\ref{eq: gx control}), the functions $g_{0}$
and $g_{1}$ have the same range. This yields the first inequality
in (\ref{eq:a priori m(T) inequality}). The second inequality is
obtained through the same reasoning.
\end{proof}
\begin{rem}
\label{rem:remark u, m(T) bound}A minor modification of the proof
of Lemma \ref{lem:aprioriu} shows that, when $H$, $f$, and $g$
are independent of $x$, the following sharper estimates hold:
\[
g(\min m_{0})+(f(\min m_{0})-H(0))(T-t)\leq u(x,t)\leq g(\max m_{0})+(f(\max m_{0})-H(0))(T-t),
\]
\[
\min m_{0}\leq m(x,T)\leq\max m_{0.}
\]
\end{rem}

\subsection{Estimates for the space-time gradient\label{subsec:Estimates grad}}

Given the operator $Q$ from (\ref{eq:ellip}), we recall that its
linearization at $u\in C^{2}(\overline{Q_{T}})$ is the linear, uniformly
elliptic operator
\begin{equation}
L_{u}(v)=-\tr(A(x,Du)D^{2}v)-D_{q}\tr(A(x,Du)D^{2}u)\cdot Dv+D_{q}b(x,Du)\cdot Dv.\label{eq:Linearization definition}
\end{equation}
The gradient estimate will be obtained through Bernstein's method.
Specifically, we will bound $||Du||_{C^{0}(\QT)}$ by evaluating the
linearization $L_{u}(v)$ at appropriately chosen functions $v(x,t)=\Phi(x,t,u,Du)$,
where $\Phi(x,t,z,q)$ is convex in $q$, exploiting the fact that,
roughly speaking, convex functions of the gradient are expected to
be subsolutions. For this purpose, we first obtain an explicit form
for the terms in (\ref{eq:Linearization definition}), as well as
a general expression for the linearization applied to such functions
$v$.
\begin{lem}
\label{lem:(Bernstein's-method)}Let $\Phi(x,t,p,s)\in C^{2}(\overline{Q_{T}}\times\mathbb{R}^{d+1})$,
assume that $u\in C^{3}(\overline{Q_{T}})$ solves $(\ref{eq:ellip}),$
and set $v(x,t)=\Phi(x,t,Du(x,t))$. Then, for each $q=(p,s)\in\mathbb{R}^{d+1}$,
and for each $\overline{x}=(x,t)\in Q_{T}$,
\begin{multline}
-D_{q}\textup{Tr}(AD^{2}u)\cdot q=2(-D_{p}HD_{xx}^{2}u+D_{x}u_{t})D_{pp}^{2}H\cdot p-\chi D_{p}(\text{\emph{Tr}}(D_{pp}^{2}HD_{xx}^{2}u))\cdot p\\
+\chi_{w}\text{\emph{Tr}}(D_{pp}^{2}HD_{xx}^{2}u)(s-D_{p}H\cdot p),\label{eq:D_q(A)}
\end{multline}
\begin{multline}
D_{q}b(x,Du)\cdot q=-(D_{p}HD_{xp}^{2}H)\cdot p-(D_{x}HD_{pp}^{2}H)\cdot p+(D_{x}fD_{pp}^{2}H)\cdot p\\
+\frac{1}{f_{m}}D_{x}f_{m}\cdot D_{p}H(-s+D_{p}H\cdot p)-\chi D_{p}\text{\emph{Tr}}(D_{xp}^{2}H)\cdot p+\chi_{w}\text{\emph{Tr}}(D_{xp}^{2}H)(s-D_{p}H\cdot p),\label{eq:D_q(b)}
\end{multline}
\begin{multline}
L_{u}v=-\text{\emph{\ensuremath{\tr}}}(D_{qq}^{2}\Phi D^{2}uAD^{2}u)-\emph{\ensuremath{\tr}}(AD_{\xo\xo}^{2}\Phi)-2\emph{\ensuremath{\tr}}(AD^{2}uD_{\xo q}^{2}\Phi)-D_{p}\Phi\cdot D_{x}b+D_{\xo}\Phi\cdot D_{q}b\\
+\sum_{i=1}^{d}\emph{\ensuremath{\tr}}(A_{x_{i}}D^{2}u)\Phi_{p_{i}}-D_{\xo}\Phi\cdot D_{q}\emph{\ensuremath{\tr}}(AD^{2}u).\label{eq:bernst}
\end{multline}
\end{lem}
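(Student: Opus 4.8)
The plan is to compute the three displayed identities directly, starting from the explicit forms of $A$ and $b$ in \eqref{eq:matrix} and \eqref{eq:first order term}, and the definition of the linearization $L_u$ in \eqref{eq:Linearization definition}. The key structural observation is that $v(x,t)=\Phi(x,t,Du)$, so by the chain rule $Dv = D_{\xo}\Phi + D_q\Phi\, D^2 u$ (schematically), and $D^2 v$ involves $D^2_{\xo\xo}\Phi$, $D^2_{\xo q}\Phi\, D^2 u$, $D^2_{qq}\Phi\, D^2u\, D^2u$, and $D_q\Phi\, D^3 u$. The third-derivative terms $D_q\Phi\, D^3u$ are the ones that must be eliminated, and they are eliminated precisely by differentiating the equation $Qu=0$: the combination $-\tr(A D^2(D_q\Phi\cdot(\text{stuff})))$ against the first-order terms $D_q \tr(AD^2u)\cdot Dv$ is engineered so that the $D^3u$ contributions cancel. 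This is the standard Bernstein bookkeeping, and I would carry it out in the order: first \eqref{eq:D_q(A)}, then \eqref{eq:D_q(b)}, then assemble \eqref{eq:bernst}.

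First I would establish \eqref{eq:D_q(A)}. Writing $A = (D_pH,-1)\otimes(D_pH,-1) + \chi\,\mathrm{diag}(D^2_{pp}H,0)$ and recalling $\tr(AD^2u)$, I differentiate in $q=(p,s)$. The rank-one part $(D_pH,-1)\otimes(D_pH,-1)$ depends on $q$ only through $p$ (via $D_pH(x,p)$), so $D_q[(D_pH,-1)\otimes(D_pH,-1)]\cdot q$ produces the term $2(-D_pH D^2_{xx}u + D_x u_t)D^2_{pp}H\cdot p$ once one uses $\tr\big(((D_pH,-1)\otimes(D_pH,-1))D^2u\big) = (D_pH,-1)D^2u(D_pH,-1)^T = D_pH D^2_{xx}u D_pH^T - 2 D_pH\cdot D_x u_t + u_{tt}$ and differentiates the $D_pH$'s via $D^2_{pp}H$. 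For the $\chi$-part, $\chi = \chi(x,-s+H(x,p))$, so $D_q\chi\cdot q = \chi_w(-s + D_pH\cdot p)$ — note the sign: $\partial_s\chi = -\chi_w$ and $\partial_p\chi = \chi_w D_pH$, hence $D_q\chi\cdot q = \chi_w(D_pH\cdot p - s)$, which gives the last line. The remaining middle term $-\chi D_p(\tr(D^2_{pp}H D^2_{xx}u))\cdot p$ comes from differentiating $D^2_{pp}H$ inside the trace. The computation for $b$ in \eqref{eq:D_q(b)} is analogous but longer: $b$ has four groups of terms, and one differentiates each in $q$, again using $D_q[\text{anything}(x,-s+H(x,p))]\cdot q = (\text{that thing})_w(D_pH\cdot p - s)$ for the $w$-dependence and the chain rule through $D_pH$, $D^2_{pp}H$ for the $p$-dependence; the term $\tfrac{1}{f_m}D_xf_m\cdot D_pH(-s+D_pH\cdot p)$ arises from differentiating $D_xf(x,f^{-1}(x,-s+H))$ in its $w$-slot, since $\partial_w[D_xf(x,f^{-1}(x,w))] = (D_xf)_m\, \partial_w f^{-1} = (D_xf)_m/f_m = D_x f_m/f_m$ (the last equality being commutation of derivatives).

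Finally, for \eqref{eq:bernst} I would expand $L_u v$ by substituting $v=\Phi(x,t,Du)$ into \eqref{eq:Linearization definition}. The second-order operator $-\tr(AD^2v)$ produces, via the chain rule, exactly $-\tr(D^2_{qq}\Phi\, D^2u\, A\, D^2u) - \tr(A D^2_{\xo\xo}\Phi) - 2\tr(A D^2u D^2_{\xo q}\Phi)$ plus a term $-\tr(A D^2(D_q\Phi))$-type expression carrying $D^3u$; this last is cancelled against the matching $D^3u$ pieces hidden in $-D_q\tr(AD^2u)\cdot Dv$ (which is why \eqref{eq:D_q(A)} was computed first — it isolates the non-$D^3u$ remainder). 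The terms $-D_p\Phi\cdot D_xb + D_{\xo}\Phi\cdot D_q b$ come from $D_q b(x,Du)\cdot Dv$ together with the implicit $x$-dependence of $b$ composed with $\Phi$, and $\sum_i \tr(A_{x_i}D^2u)\Phi_{p_i} - D_{\xo}\Phi\cdot D_q\tr(AD^2u)$ collects the leftover first-order contributions from the chain rule applied to $\tr(AD^2u)$ in the $x$ and $q$ directions. The main obstacle is bookkeeping discipline: one must track signs carefully (especially $\partial_s = -\partial_w$ on the $w$-composed quantities) and be sure that every $D^3u$ term generated by $-\tr(AD^2v)$ is matched by one from the $-D_q\tr(AD^2u)\cdot Dv$ term, so that the final expression \eqref{eq:bernst} is genuinely free of third derivatives of $u$. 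No single step is deep, but the reconciliation of the $D^3u$ cancellation is where errors would creep in, so I would verify it by checking the coefficient of each distinct third-derivative monomial.
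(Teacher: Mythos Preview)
Your plan is essentially the paper's own proof: direct differentiation of \eqref{eq:matrix} and \eqref{eq:first order term} for the first two identities, and for \eqref{eq:bernst} the chain-rule expansion of $Dv$ and $D^2v$ combined with the differentiated equation $D_{\bar x}(Qu)=0$ to eliminate the third derivatives of $u$.

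One imprecision is worth flagging. In your final paragraph you say the $D^3u$ piece of $-\tr(AD^2v)$ is ``cancelled against the matching $D^3u$ pieces hidden in $-D_q\tr(AD^2u)\cdot Dv$''. That term of the linearization carries no third derivatives of $u$: the $D_q$ is a derivative in the gradient slot with $D^2u$ frozen. The elimination mechanism is exactly what you stated correctly earlier, namely the identity $D_q\Phi\cdot D_{\bar x}(Qu)=0$, which rewrites $-\tr(A\,D_q\Phi\,D^3u)$ in terms of lower-order quantities. The paper organizes the computation by starting from $0=D_q\Phi\cdot D_{\bar x}(Qu)$, substituting $D_q\Phi\,D^3u = D^2v - D^2_{\bar x\bar x}\Phi - 2D^2u D^2_{\bar x q}\Phi - D^2u D^2_{qq}\Phi D^2u$ and $D_q\Phi\,D^2u = Dv - D_{\bar x}\Phi$, and then recognizing $L_u v$ inside the result; this is the same bookkeeping as yours, just run in the opposite direction, and it makes the $D^3u$ cancellation automatic rather than something to be verified term by term.
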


\begin{proof}
Using (\ref{eq:matrix}),
\begin{align*}
-(D_{q}\tr(AD^{2}u))\cdot q= & -D_{q}(\tr((D_{p}H\otimes D_{p}H+\chi D_{pp}^{2}H)D_{xx}^{2}u)-2D_{p}H\cdot D_{x}u_{t})\cdot q\\
= & -2(D_{p}HD_{xx}^{2}uD_{pp}^{2}H)\cdot p-\chi D_{p}\tr(D_{pp}^{2}HD_{xx}^{2}u)\cdot p\\
 & -\chi_{w}\tr(D_{pp}^{2}HD_{xx}^{2}u)(-s+D_{p}H\cdot p)+2(D_{pp}^{2}HD_{x}u_{t})\cdot p\\
= & 2(-D_{p}HD_{xx}^{2}u+D_{x}u_{t})D_{pp}^{2}H\cdot p-\chi D_{p}(\tr(D_{pp}^{2}HD_{xx}^{2}u))\cdot p\\
 & +\chi_{w}\tr(D_{pp}^{2}HD_{xx}^{2}u)(s-D_{p}H\cdot p),
\end{align*}
which shows (\ref{eq:D_q(A)}). Equation (\ref{eq:D_q(b)}) is an
immediate consequence of (\ref{eq:first order term}). From the definition
of $v$, it follows that
\begin{align*}
Dv= & D_{\bar{x}}\Phi+D_{q}\Phi D^{2}u\;\;\text{ and \;\;}D^{2}v=D_{\xo\xo}^{2}\Phi+(D^{2}uD_{\xo q}^{2}\Phi+D_{q\xo}^{2}\Phi D^{2}u)+D^{2}uD_{qq}^{2}\Phi D^{2}u+D_{q}\Phi D^{3}u.
\end{align*}
Thus, differentiating the equation $Qu=0$ and taking the inner product
with $D_{q}\Phi$ yields
\begin{align*}
0= & \;D_{q}\Phi\cdot D_{\xo}(-\tr(A(x,Du(x,t))D^{2}u(x,t))+b(x,Du(x,t)))\\
= & -\tr(AD_{q}\Phi D^{3}u)-\sum_{i=1}^{d+1}\tr(A_{\xo_{i}}D^{2}u)\Phi_{q_{i}}-D_{q}\Phi D^{2}u\cdot D_{q}\tr(AD^{2}u)+D_{q}\Phi D^{2}u\cdot D_{q}b+D_{q}\Phi\cdot D_{\xo}b\\
= & -\tr(A(D^{2}v-(D_{\xo\xo}^{2}\Phi+(D^{2}uD_{\xo q}^{2}\Phi+D_{q\xo}^{2}\Phi)D^{2}u+D^{2}uD_{qq}^{2}\Phi D^{2}u))-D_{q}\tr(AD^{2}u)\cdot(Dv-D_{\xo}\Phi)\\
 & +D_{q}b\cdot(Dv-D_{\xo}\Phi)-\sum_{i=1}^{d+1}\tr(A_{\xo_{i}}D^{2}u)\Phi_{q_{i}}+D_{q}\Phi D_{\xo}b.
\end{align*}
Using the fact that $A$ and $b$ are independent of $t$, as well
as (\ref{eq:Linearization definition}), we obtain
\begin{align*}
0= & \;L_{u}v+\tr(A(D_{\xo\xo}^{2}\Phi+(D^{2}uD_{\xo q}^{2}\Phi+D_{q\xo}^{2}\Phi D^{2}u)+D^{2}uD_{qq}^{2}\Phi D^{2}u))+D_{q}\tr(AD^{2}u)\cdot D_{\xo}\Phi\\
 & -D_{q}b\cdot D_{\xo}\Phi-\sum_{i=1}^{d}\tr(A_{x_{i}}D^{2}u)\Phi_{p_{i}}+D_{p}\Phi\cdot D_{x}b\\
= & \;L_{u}v+\tr(D_{qq}^{2}\Phi D^{2}uAD^{2}u)+\tr(AD_{\xo\xo}^{2}\Phi)+2\tr(AD^{2}uD_{\xo q}^{2}\Phi)+D_{p}\Phi\cdot D_{x}b\\
 & -D_{q}b\cdot D_{\xo}\Phi-\sum_{i=1}^{d}\tr(A_{x_{i}}D^{2}u)\Phi_{p_{i}}+D_{\xo}\Phi\cdot D_{q}\tr(AD^{2}u),
\end{align*}
which proves (\ref{eq:bernst}).
\end{proof}
\begin{cor}
\label{cor:utbound}Let $(u,m)\in C^{3}(\overline{Q_{T}})\times C^{2}(\overline{Q_{T}})$
be a solution to (\ref{eq:mfg}), and set

\[
\eta_{0}=\min(\min_{\mathbb{T}^{d}}m_{0},\min_{\mathbb{T}^{d}}m(\cdot,T)),\;\eta_{1}=\max(\max_{\mathbb{T}^{d}}m_{0},\max_{\mathbb{T}^{d}}m(\cdot,T)).
\]
Then
\begin{equation}
L_{u}(u_{t})=0,\;\text{ and }\;-C_{0}-f_{1}(\eta_{1})\leq u_{t}\leq||H(\cdot,D_{x}u)||_{C^{0}(\overline{Q_{T}})}-f_{0}(\eta_{0}).\label{eq:u_tbound}
\end{equation}
\end{cor}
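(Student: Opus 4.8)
### Proof Strategy for Corollary \ref{cor:utbound}

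The plan is to establish the two assertions in \eqref{eq:u_tbound} separately, first the identity $L_u(u_t) = 0$ via differentiation of the elliptic equation in time, and then the two-sided bound on $u_t$ via the maximum principle applied on $\overline{Q_T}$, using the boundary conditions of \eqref{eq:ellip} together with the definitions of $\eta_0, \eta_1$.

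For the identity, I would apply Lemma \ref{lem:(Bernstein's-method)} with the particular choice $\Phi(x,t,p,s) = s$, so that $v = \Phi(x,t,Du) = u_t$. With this choice $D_q \Phi = (0,\ldots,0,1)$ is constant, $D_{qq}^2\Phi = 0$, $D_{\xo}\Phi = 0$, $D_p\Phi = 0$, and $D_{\xo q}^2 \Phi = 0$, so every term on the right-hand side of \eqref{eq:bernst} vanishes, yielding $L_u(u_t) = 0$ directly. (Equivalently, this is just the statement that differentiating $Qu = 0$ in the $t$-direction and using that $A$ and $b$ are independent of $t$ gives the linearized equation satisfied by $\partial_t u$; the lemma packages exactly this computation.)

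For the bound on $u_t$, since $L_u$ is a linear uniformly elliptic operator with no zeroth-order term and $L_u(u_t) = 0$, the strong maximum principle forces $u_t$ to attain its maximum and minimum over $\overline{Q_T}$ on the parabolic-type boundary $\partial Q_T = \mathbb{T}^d \times \{0,T\}$. On $\{t = 0\}$, the boundary condition $-u_t + H(x,D_x u) = f(x,m_0(x))$ gives $u_t = H(x,D_x u) - f(x,m_0(x))$, which lies between $-f_1(\eta_1) - \|H(\cdot,D_x u)\|_{C^0}$... more precisely, $u_t \le \|H(\cdot,D_x u)\|_{C^0(\overline{Q_T})} - f_0(m_0(x)) \le \|H(\cdot,D_x u)\|_{C^0} - f_0(\eta_0)$ using $f_0(m_0(x)) \ge f_0(\eta_0)$ since $f_0$ is increasing and $m_0(x) \ge \eta_0$; and $u_t \ge -f_1(m_0(x)) \ge -f_1(\eta_1) \ge -C_0 - f_1(\eta_1)$, where the bound $H \ge -C_0$ (from \eqref{eq: H>=00003D|p|^2-C}) handles the sign — actually here we simply use $H \ge 0 - C_0$ so $u_t = H - f(x,m_0) \ge -C_0 - f_1(\eta_1)$. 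On $\{t = T\}$, the relation $f(x,m(x,T)) = -u_t + H(x,D_x u)$ (which holds since on the terminal slice $m(x,T) = f^{-1}(x, -u_t + H)$, the very substitution used to derive \eqref{eq:ellip}) gives $u_t = H(x,D_x u) - f(x,m(x,T))$, and the same two-sided estimate applies with $m(x,T)$ in place of $m_0(x)$, using $\eta_0 \le m(x,T) \le \eta_1$ and monotonicity of $f_0, f_1$. Combining the estimates on both boundary components yields \eqref{eq:u_tbound}.

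The only mild subtlety — and the point I would state carefully — is the justification that $u_t = H(x,D_x u) - f(x, m(x,T))$ on the terminal slice: this is not literally the boundary condition $B(x,T,u,Du) = 0$ written in \eqref{eq:boundary} (which involves $g$ and $u$), but rather follows from the consistency relation $m = f^{-1}(x, -u_t + H(x,D_x u))$ that defines $m$ from $u$ in the elliptic reformulation, evaluated at $t = T$; equivalently, it is just the first equation of \eqref{eq:mfg} restricted to $t = T$. With that identification in hand, the remaining inequalities are a routine application of monotonicity of $f_0$ and $f_1$ and the lower bound $H \ge -C_0$, so no genuine obstacle arises here beyond bookkeeping of the constants.
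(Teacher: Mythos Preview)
Your proposal is correct and follows essentially the same approach as the paper: apply Lemma~\ref{lem:(Bernstein's-method)} with $\Phi(x,t,p,s)=s$ to get $L_u(u_t)=0$, invoke the maximum principle to reduce to $\partial Q_T$, and then read off the bounds from the HJ equation $-u_t+H=f(\cdot,m)$ at $t=0$ and $t=T$ together with the lower bound $H\geq -C_0$ from \eqref{eq: H>=00003D|p|^2-C}. Your remark about the terminal slice is exactly the paper's observation that the bound ``follows immediately from the HJ equation in \eqref{eq:mfg}''; there is no additional subtlety.
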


\begin{proof}
Letting $\Phi(x,t,p,s)=s$ in Lemma \ref{lem:(Bernstein's-method)},
since $D_{\xo}\Phi$, $D_{p}\Phi$, $D^{2}\Phi\equiv0$, it follows
that
\[
L_{u}(u_{t})=L_{u}(\Phi(x,t,Du))=0.
\]
Hence, the maximum and minimum values of $u_{t}$ are attained in
$\partial Q_{T}$, and (\ref{eq:u_tbound}) then follows immediately
from (\ref{eq: H>=00003D|p|^2-C}) and the HJ equation in (\ref{eq:mfg}).
\end{proof}
By Corollary \ref{cor:apriorim}, this result reduces the problem
to estimating $||D_{x}u||_{C^{0}},$ but it is also a key ingredient
for obtaining that bound, particularly due to the fact that the term
$||H(\cdot,D_{x}u)||_{C^{0}(\QT)}$ has coefficient $1$ in (\ref{eq:u_tbound}).
We now begin to simplify the quantity (\ref{eq:bernst}) for the specific
$\Phi$ that will be used in the proof of the gradient estimate, bounding
one of the dominant signed terms by a simpler expression, using matrix
algebra.

\begin{lem}
\label{lem:trace lemma} Assume that (\ref{eq:SE}) holds. For each
$(x,t,p,s)\in\overline{Q_{T}}\times\mathbb{R}^{d+1},$ set $\widetilde{H}(x,t,p,s)=H(x,p)$,
and define the matrix $\tilde{I}=(\delta^{ij}(1-\delta^{i,d+1}))_{i,j=1}^{d+1}$.
Then, for every $u\in C^{2}(Q_{T})$,
\begin{multline}
\emph{\text{Tr}}(D_{qq}^{2}\widetilde{H}(x,t,Du)D^{2}uA(x,Du)D^{2}u)\geq\frac{3}{4C_{0}}|-D_{x}u_{t}+D_{p}H(x,D_{x}u)D_{xx}^{2}u|^{2}\\
+\frac{1}{4C_{0}}\emph{\ensuremath{\tr}}(\tilde{I}D^{2}uAD^{2}u)+\frac{3\chi}{4C_{0}^{2}}|D_{xx}^{2}u|^{2}.\label{eq:trace ineq}
\end{multline}
\end{lem}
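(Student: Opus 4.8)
The target inequality is a purely algebraic (pointwise) lower bound, so the plan is to work at a fixed point $(x,t)$ and drop the arguments, writing $P = D_x H$, $N = D_{pp}^2 H$ (so $\frac{1}{C_0} I \le N \le C_0 I$ by \eqref{eq:strconv}), and abbreviate the space-time Hessian as $D^2 u = D_{x,t}^2 u$, with the convention that $D_{xx}^2 u$ is its $d\times d$ block. First I would write out $D_{qq}^2\widetilde H$ explicitly: since $\widetilde H(x,t,p,s) = H(x,p)$ does not depend on $s$, we have $D_{qq}^2\widetilde H = \begin{pmatrix} N & 0 \\ 0 & 0\end{pmatrix}$, i.e. $D_{qq}^2 \widetilde H = \tilde I \,(\text{the }N\text{-block})\,\tilde I$ in the obvious sense. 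The key observation is that $A$ from \eqref{eq:matrix} splits as $A = (P,-1)\otimes(P,-1) + \chi\begin{pmatrix} N & 0\\0&0\end{pmatrix}$, so the left side splits into two nonnegative pieces:
\begin{align*}
\tr(D_{qq}^2\widetilde H\, D^2 u\, A\, D^2 u) &= \bigl| (P,-1)^T\!\cdot D^2 u \bigr|_N^2 \;+\; \chi\,\tr\!\Bigl(\tilde I \begin{pmatrix}N&0\\0&0\end{pmatrix}\! D^2 u \begin{pmatrix}N&0\\0&0\end{pmatrix}\! D^2 u\Bigr),
\end{align*}
where $|w|_N^2 = w N w^T$ for a row vector $w$ of length $d+1$ (the $N$ here being padded to act only on the first $d$ coordinates). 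The first term equals $(P,-1)D^2 u\, N\, (D^2 u)^T (P,-1)^T$; one checks that $(P,-1) D^2 u$, read in the first $d$ coordinates, is exactly $P\, D_{xx}^2 u - D_x u_t = -(-D_x u_t + P D_{xx}^2 u)$, so applying $\frac{1}{C_0} I \le N$ gives $\ge \frac{1}{C_0}\bigl| -D_x u_t + D_p H\, D_{xx}^2 u\bigr|^2$.

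The second, $\chi$-dependent term is $\chi\,\tr(N D_{xx}^2 u\, N D_{xx}^2 u)$ after expanding the block structure; using $N \ge \frac{1}{C_0} I$ twice and the fact that $M \mapsto \tr(MD_{xx}^2 u M D_{xx}^2 u)$ is monotone in $M \ge 0$ among positive-definite $M$ (writing $\tr(N S N S) = \tr((N^{1/2}SN^{1/2})^2) \ge \frac{1}{C_0^2}\tr(S^2) = \frac{1}{C_0^2}|S|^2$ for symmetric $S = D_{xx}^2 u$), this is $\ge \frac{\chi}{C_0^2}|D_{xx}^2 u|^2$.

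So far this yields the inequality with constants $\frac{1}{C_0}$ and $\frac{\chi}{C_0^2}$ on the first and third terms and nothing on the middle term $\tr(\tilde I D^2 u\, A\, D^2 u)$; the job is now to redistribute. The point is that $\tr(\tilde I D^2 u\, A\, D^2 u) = |(P,-1)D^2 u\,\tilde I^{1/2}|^2$-type expression plus $\chi$-terms, and crucially $\tilde I$ only ``sees'' the first $d$ columns, so $\tr(\tilde I D^2 u\, A\, D^2 u) \le \tr(D^2 u\, A\, D^2 u)$, which is bounded by $C_0$ times the analog without the middle weight. More precisely, I would bound $\tr(\tilde I D^2 u A D^2 u)$ from above: its $(P,-1)\otimes(P,-1)$ part is $\bigl|(P,-1)D^2 u\bigr|^2$ summed over the first $d$ coordinates $\le |{-}D_x u_t + PD_{xx}^2 u|^2 + |(P,-1)D_{x,t}^2 u\, e_{d+1}|^2$; the second term there involves $u_{tt}$ and mixed $D_x u_t$, and this is where one must use the \emph{equation} $Qu = 0$ to eliminate $u_{tt}$ — or more cheaply, absorb it into the other quadratic terms. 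The $\chi$-part of $\tr(\tilde I D^2 u A D^2 u)$ is $\chi\,\tr(\tilde I D^2 u \begin{pmatrix}N&0\\0&0\end{pmatrix}D^2 u) = \chi\,\tr\bigl((D_{xx}^2 u\, N, \;(D_x u_t) N)(\cdots)\bigr) \le \chi C_0(|D_{xx}^2 u|^2 + |D_x u_t|^2)$, and $|D_x u_t|^2$ is in turn controlled by $|{-}D_x u_t + PD_{xx}^2 u|^2 + |P|^2|D_{xx}^2 u|^2$. The upshot is a bound
\[
\tfrac{1}{4C_0}\tr(\tilde I D^2 u\, A\, D^2 u) \;\le\; \tfrac{1}{4C_0}\bigl|{-}D_x u_t + D_p H D_{xx}^2 u\bigr|^2 + \tfrac{1}{4C_0}\bigl(\text{small multiple of }\chi|D_{xx}^2 u|^2 + \text{terms involving }u_{tt}\bigr),
\]
and then the budget $\frac{1}{C_0} = \frac{3}{4C_0} + \frac{1}{4C_0}$ on the first term and $\frac{\chi}{C_0^2} = \frac{3\chi}{4C_0^2} + \frac{\chi}{4C_0^2}$ on the third closes the estimate, provided the leftover coefficients are genuinely $\le \frac14$ of the respective budgets — which is exactly where the numerical factors $\frac34$, $\frac14$ in the statement come from.

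\textbf{Main obstacle.} The delicate point is the appearance of $u_{tt}$ in $\tr(\tilde I D^2 u\, A\, D^2 u)$: since $\tilde I$ kills the last row/column of one factor but not the other, cross terms with $D_x u_t$ and a genuine $u_{tt}$ term survive, and $u_{tt}$ is not directly controlled by the right-hand side. I expect the resolution is either (a) to use the equation $Qu = 0$ to substitute $-u_{tt} = \tr$ of the remaining $D^2 u$ entries against $A$ minus lower-order, thereby re-expressing $u_{tt}$ in terms of $D_{xx}^2 u$, $D_x u_t$ and first-order data, or (b) to note that in fact $\tr(\tilde I D^2 u A D^2 u)$, because of how $\tilde I$ is placed, never actually produces a bare $|u_{tt}|^2$ — only the combination $(P,-1)D^2 u$ evaluated on the first $d$ coordinates and $N$-weighted versions of $D_{xx}^2 u$, $D_x u_t$ — so a careful block expansion shows the $u_{tt}$-dependence enters only through $P D_x u_t$-type cross terms that are Cauchy–Schwarz'd away. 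Option (b) is cleaner and I suspect is what is intended; verifying it requires writing $\tilde I D^2 u\, A\, D^2 u$ with all $(d+1)\times(d+1)$ blocks expanded and checking column-by-column that the last column of the right factor is annihilated before any $u_{tt}$ can couple to itself. That block bookkeeping, together with tracking that the discarded coefficients stay under the $\frac14$ threshold, is the only real work; everything else is monotonicity of $\tr((N^{1/2}SN^{1/2})^2)$ and $\frac{1}{C_0} I \le N \le C_0 I$.
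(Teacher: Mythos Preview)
Your ``main obstacle'' is a phantom: option (b) is correct and immediate. Since $\tilde I$ projects onto the first $d$ diagonal entries, $\tr(\tilde I\,D^2u\,A\,D^2u)=\sum_{k=1}^{d}(Du_{x_k})^{T}A\,Du_{x_k}$, and for $k\le d$ the column $Du_{x_k}=(D_xu_{x_k},u_{tx_k})$ never contains $u_{tt}$. Expanding $A$ then gives exactly
\[
\tr(\tilde I\,D^2u\,A\,D^2u)=\bigl|{-}D_xu_t+D_pH\,D_{xx}^2u\bigr|^2+\chi\,\tr\!\bigl(D_{xx}^2u\,D_{pp}^2H\,D_{xx}^2u\bigr),
\]
with no $u_{tt}$ anywhere and no need for the equation $Qu=0$.

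The actual gap is in your redistribution step. Once you lower-bound the $\chi$-piece all the way to $\dfrac{\chi}{C_0^2}|D_{xx}^2u|^2$ (two applications of $N\ge\frac{1}{C_0}I$), you cannot carve out $\dfrac{1}{4C_0}\tr(\tilde I\,D^2u\,A\,D^2u)$: writing $Z$ for that trace and $S=D_{xx}^2u$, you would need $\tr(SNS)\le\frac{1}{C_0}|S|^2$, but $N\ge\frac{1}{C_0}I$ gives precisely the \emph{opposite} inequality $\tr(SNS)\ge\frac{1}{C_0}|S|^2$. So your proposed upper bound on $Z$ is false in general, and the budget $\frac{1}{4C_0}|v|^2+\frac{\chi}{4C_0^2}|S|^2$ is genuinely insufficient. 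The remedy---and this is what the paper does---is to apply $N\ge\frac{1}{C_0}I$ only \emph{once}: observe $D_{qq}^2\widetilde H\ge\frac{1}{C_0}\tilde I$ and trace both sides against the nonnegative matrix $D^2u\,A\,D^2u$ to get $\mathrm{LHS}\ge\frac{1}{C_0}Z$ in one stroke. Then split $\frac{1}{C_0}Z=\frac{3}{4C_0}Z+\frac{1}{4C_0}Z$, keep the $\frac{1}{4C_0}Z$ term as is, and on the $\frac{3}{4C_0}Z$ part use the formula above together with $D_{pp}^2H\ge\frac{1}{C_0}I$ to obtain $\frac{3}{4C_0}|{-}D_xu_t+D_pH\,D_{xx}^2u|^2+\frac{3\chi}{4C_0^2}|D_{xx}^2u|^2$. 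No block bookkeeping, no Cauchy--Schwarz, no coefficient tracking is needed.
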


\begin{proof}
By (\ref{eq:strconv}),
\[
D_{qq}^{2}\widetilde{H}\geq\frac{1}{C_{0}}\tilde{I};
\]
thus, since the matrix $D^{2}uAD^{2}u$ is non-negative, multiplying
both sides of the inequality by this matrix and taking the trace of
both sides gives 
\begin{equation}
\tr(D_{pp}^{2}\widetilde{H}D^{2}uAD^{2}u)\geq\frac{1}{C_{0}}\tr(\tilde{I}D^{2}uAD^{2}u).\label{eq:DHtilde trace intermediate}
\end{equation}
Now, by (\ref{eq:matrix}) and (\ref{eq:strconv}),
\begin{multline}
\tr(\tilde{I}D^{2}uAD^{2}u)=\sum_{k=1}^{d}Du_{x_{k}}A\cdot Du_{x_{k}}=\sum_{k=1}^{d}|(D_{p}H,-1)\cdot Du_{x_{k}}|^{2}+\chi D_{x}u_{x_{k}}D_{pp}^{2}H\cdot D_{x}u_{x_{k}}\\
\geq\sum_{k=1}^{d}|D_{p}H\cdot D_{x}u_{x_{k}}-u_{tx_{k}}|^{2}+\frac{\chi}{C_{0}}|D_{x}u_{x_{k}}|^{2}=|D_{p}HD_{xx}^{2}u-D_{x}u_{t}|^{2}+\frac{\chi}{C_{0}}|D_{xx}^{2}u|^{2}.\label{eq:I_d trace intermediate}
\end{multline}
Combining (\ref{eq:DHtilde trace intermediate}) and (\ref{eq:I_d trace intermediate})
yields, as desired,
\begin{align*}
\tr(D_{pp}^{2}\widetilde{H}D^{2}uAD^{2}u) & =\frac{3}{4}\tr(D_{pp}^{2}\widetilde{H}D^{2}uAD^{2}u)+\frac{1}{4}\tr(D_{pp}^{2}\widetilde{H}D^{2}uAD^{2}u)\\
 & \geq\frac{3}{4C_{0}}|-D_{x}u_{t}+D_{p}HD_{xx}^{2}u|^{2}+\frac{3\chi}{4C_{0}^{2}}|D_{xx}^{2}u|^{2}+\frac{1}{4C_{0}}\tr(\tilde{I}D^{2}uAD^{2}u).
\end{align*}
\end{proof}
The next Lemma continues to simplify the linearizations. Since one
of the dominant signed terms will later be shown to be of order $|D_{x}u|^{4}$,
the goal will be to bound everything else by $(4-\epsilon)^{\text{}}$th
powers of $|D_{x}u|$, $(2-\epsilon)^{\text{}}$th powers of $u_{t}$
(dealing with these through Corollary \ref{cor:utbound}), and second
derivative terms that can be dealt with using the other dominant term
(\ref{eq:trace ineq}). The usage of $\Phi(x,t,D_{x}u,u_{t})=H(x,D_{x}u)$,
as opposed to a more standard choice such as $|D_{x}u|^{2}$ or $|Du|^{2}$,
is crucial in the next two results, in order to produce structural
cancellation of terms that can not be otherwise estimated, as well
as to be able to use (\ref{eq:u_tbound}) without gaining any constant
factors in the process. 
\begin{lem}
\label{lem: L(v1) and L(v2)} Assume that (\ref{eq:SE}) holds. Let
$u\in C^{3}(\overline{Q_{T}})$ be a solution to (\ref{eq:ellip}),
and let $c,c'\in\mathbb{R}.$ Define
\[
\widetilde{u}=u+c(T-t)+c',\;v_{1}=\frac{\utilde^{2}}{2},\;v_{2}=H(\cdot,D_{x}u).
\]
Then, for each $(x,t)\in Q_{T}$, there exists $C(x,t)>0$, with \textup{
\[
C(x,t)=C\left(C_{0},||\utilde||_{C^{0}(Q_{T})},\frac{1}{\chi(x,f(x,m(x,t)))},|h_{w}(x,f(x,m(x,t)))|,c\right),
\]
such that}
\begin{multline}
L_{u}(v_{1})\leq-\frac{1}{2}|-u_{t}+D_{p}H(x,D_{x}u)D_{x}u|^{2}-\frac{1}{C_{0}}\chi|D_{x}u|^{2}+C(x,t)(1+|D_{x}u|^{2+\tau}+\chi|D_{xx}^{2}u|^{2}\\
|f|^{\tau}|D_{x}u|^{2}+|f|(1+|D_{x}u|^{\tau})+|f|^{(1+\tau)/2}|D_{x}u|+\chi+|-D_{x}u_{t}+D_{xx}^{2}uD_{p}H|^{2}).\label{eq:Lutilde}
\end{multline}
and
\begin{multline}
L_{u}(v_{2})\leq\frac{-1}{2C_{0}}|-D_{x}u_{t}+D_{p}HD_{xx}^{2}u|^{2}-\frac{\chi}{2C_{0}^{2}}|D_{xx}^{2}u|^{2}+C(x,t)(1+|D_{x}u|^{3+\tau}+\chi(1+|D_{x}u|^{1+\tau})\\
+\chi^{(1+\tau)/2}|D_{x}u|^{2}+|f|(1+|D_{x}u|^{1+\tau})+|D_{x}u|^{2}|f|^{(1+\tau)/2}).\label{eq:L(H)}
\end{multline}
\end{lem}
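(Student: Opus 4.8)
The strategy is to apply the general linearization formula \eqref{eq:bernst} from Lemma \ref{lem:(Bernstein's-method)} to the two specific choices $\Phi(x,t,p,s)=\tfrac12(z+b(T-t)+c)^2$ evaluated along $z=u$ (giving $v_1$), and $\Phi(x,t,p,s)=H(x,p)$ (giving $v_2$), and then to bound every term on the right-hand side. For $v_1$ the relevant derivatives are $D_q\Phi=0$, $D_{qq}^2\Phi=0$, $D_{\bar x}\Phi=(0,-\utilde b)$ essentially (the only nontrivial dependence is through $u$, so one must be careful: write $v_1=\tfrac12\utilde^2$ with $\utilde$ a genuine function of $(x,t)$, compute $Dv_1=\utilde D\utilde$, $D^2v_1=\utilde D^2\utilde+D\utilde\otimes D\utilde$, and feed this into the definition of $L_u$ directly rather than through the $\Phi$-formalism, since $\Phi$ here depends on $u$ not just $Du$). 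The dominant good term $-\tr(A D\utilde\otimes D\utilde)=-|(D_pH,-1)\cdot D\utilde|^2-\chi D_x\utilde D_{pp}^2H\cdot D_x\utilde \le -\tfrac12|-u_t+D_pH\cdot D_xu|^2+Cb^2-\tfrac1{C_0}\chi|D_xu|^2+C\chi$ after expanding $D\utilde=(D_xu,u_t-b)$ and using \eqref{eq:strconv}; the cross terms in $b$ are absorbed. The term $-\utilde\, L_u u = -\utilde\,(\text{l.h.s. of the PDE in terms of }b)$ contributes $-\utilde\,\tr(A D^2\utilde)$-type pieces that reduce, via $L_u u = $ (first-order stuff from \eqref{eq:D_q(A)}, \eqref{eq:D_q(b)}), to the terms $|D_xu|^{2+\tau}$, $|f|^\tau|D_xu|^2$, $|f|(1+|D_xu|^\tau)$, $|f|^{(1+\tau)/2}|D_xu|$ after invoking \eqref{eq:strconv}, \eqref{eq:qg2}, \eqref{eq:Hxxp, Hxpp bound}, \eqref{growth chi}, \eqref{eq: fmx bound}, \eqref{eq:fx, fxx bound}, and the bounds in the Remark. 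The $|-D_xu_t+D_{xx}^2u\,D_pH|^2$ and $\chi|D_{xx}^2u|^2$ terms on the right of \eqref{eq:Lutilde} come precisely from the $\utilde\,\tr(A D^2\utilde)$ piece having a factor of $\utilde$ (bounded) times second derivatives, Cauchy–Schwarz'd so that the $L^2$-in-second-derivatives part is split off for later reabsorption using \eqref{eq:trace ineq}.

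For $v_2=H(\cdot,D_xu)$ one genuinely uses the $\Phi$-calculus: here $\Phi(x,t,p,s)=H(x,p)$, so $D_q\Phi=(D_pH,0)$, $D_{qq}^2\Phi=\big(\begin{smallmatrix}D_{pp}^2H&0\\0&0\end{smallmatrix}\big)=D_{qq}^2\widetilde H$, $D_{\bar x}\Phi=(D_xH,0)$, $D_{\bar x q}^2\Phi=(D_{xp}^2H\text{ block})$, $D_{\bar x\bar x}^2\Phi=(D_{xx}^2H\text{ block})$. Plugging into \eqref{eq:bernst}: the leading term $-\tr(D_{qq}^2\Phi\,D^2u\,A\,D^2u)=-\tr(D_{qq}^2\widetilde H\,D^2u\,A\,D^2u)$ is controlled below by Lemma \ref{lem:trace lemma}, which yields exactly $-\tfrac1{2C_0}|-D_xu_t+D_pH D_{xx}^2u|^2-\tfrac{\chi}{2C_0^2}|D_{xx}^2u|^2$ after keeping $\tfrac12$ of the $\tfrac1{C_0}$- and $\tfrac1{C_0^2}$-coefficients and using the remaining $\tfrac14$-portions to dominate the error terms $-\tr(A D_{\bar x\bar x}^2\Phi)$, $-2\tr(A D^2u D_{\bar x q}^2\Phi)$, $\sum\tr(A_{x_i}D^2u)\Phi_{p_i}$, and $-D_{\bar x}\Phi\cdot D_q\tr(AD^2u)$ — each of these is linear or quadratic in $D^2u$ with coefficients of controlled polynomial growth (using $|A|\le C(1+|D_xu|^2+\chi)$, $|A_{x}|$-bounds from differentiating \eqref{eq:matrix} and using \eqref{eq: Hxp bound}, \eqref{eq:Hxxp, Hxpp bound}, \eqref{growth chi}, \eqref{eq:D_q(A)}), so after Cauchy–Schwarz with small constant they are absorbed at the cost of zeroth-order terms $|D_xu|^{3+\tau}$, $\chi(1+|D_xu|^{1+\tau})$, $\chi^{(1+\tau)/2}|D_xu|^2$. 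Finally the explicit first-order terms $-D_p\Phi\cdot D_xb+D_{\bar x}\Phi\cdot D_qb$ are $-D_pH\cdot D_xb + D_xH\cdot D_pb$ where $D_xb, D_pb$ are read off by differentiating \eqref{eq:first order term}; their sizes are governed by \eqref{eq: Hx, Hxx bound}, \eqref{eq:fx, fxx bound}, \eqref{eq: fmx bound}, \eqref{eq:Hxxp, Hxpp bound}, \eqref{growth chi}, giving the remaining $|f|(1+|D_xu|^{1+\tau})$ and $|D_xu|^2|f|^{(1+\tau)/2}$ contributions.

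**Main obstacles.** The principal difficulty is the bookkeeping of the $x$-dependence: every quantity $A$, $b$, $\chi$, $f^{-1}$ now carries spatial derivatives, and one must verify that each resulting term is controlled by the listed inequalities with the \emph{correct} powers of $|D_xu|$ (strictly less than $4$) and of $|f|$ — in particular that the apparently dangerous $D_xf_m/f_m$-type term in \eqref{eq:D_q(b)} is tamed by \eqref{eq: fmx bound} together with $|D_xf|\le C(1+|f|^{\tau/2}+\chi^{(1+\tau)/2})$ from \eqref{eq:fx, fxx bound}, and that $\chi$-weighted second-derivative terms always appear with a spare power of $\chi$ so they can be absorbed into $\tfrac{3\chi}{4C_0^2}|D_{xx}^2u|^2$ rather than needing a lower bound on $\chi$. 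The constant $C(x,t)$ is allowed to blow up as $\chi(x,f(x,m(x,t)))\to 0$ and as $|h_w|\to\infty$ — this is why those quantities appear in its argument list — which is what permits the otherwise-uncontrollable terms involving $D_p\chi = 2h\,D_p h$ (via $h_w$ and the chain rule $D_p(\chi(x,-s+H)) = \chi_w D_pH$, with $\chi_w$ bounded by \eqref{growth chi} but $h_w=\chi_w/(2\sqrt\chi)$ unbounded) to be swept into $C(x,t)$ times lower-order terms. The second, more delicate, obstacle is the sharp constant $\tfrac12$ in front of $|-u_t+D_pH\cdot D_xu|^2$ in \eqref{eq:Lutilde} and the coefficient $1$ implicitly needed later when combining with $u_t$-estimates from Corollary \ref{cor:utbound}: one must expand $-\tr(A\,D\utilde\otimes D\utilde)$ \emph{without wasting constants}, keeping $\tfrac12|(D_pH,-1)\cdot D\utilde|^2$ intact and pushing only the other $\tfrac12$ into the absorption of $b$-cross-terms and $\chi$-terms — this is exactly the structural reason the author flags for choosing $\Phi=H$ rather than $|D_xu|^2$. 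I would organize the proof as: (1) compute $Dv_i, D^2v_i$ and assemble $L_uv_i$ via \eqref{eq:bernst} (or directly for $v_1$); (2) isolate the two or three manifestly-good negative-definite terms; (3) estimate each remaining term by one of the displayed quantities, citing the corresponding hypothesis, with all second-derivative terms Cauchy–Schwarz'd with a small parameter so a $\delta$-fraction of each good term reabsorbs them; (4) collect constants into $C(x,t)$ with the stated dependence.
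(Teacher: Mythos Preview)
Your overall architecture is correct and matches the paper: compute $L_u v_1$ directly from $Dv_1=\utilde D\utilde$, $D^2 v_1=\utilde D^2 u + D\utilde\otimes D\utilde$, and compute $L_u v_2$ via \eqref{eq:bernst} with $\Phi=\widetilde H$, invoking Lemma \ref{lem:trace lemma} for the dominant negative term. The $v_1$ part of your plan is essentially the paper's proof.

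However, there is a genuine gap in your treatment of $v_2$. You propose to bound the two groups $-D_pH\cdot D_xb + D_xH\cdot D_pb$ and $\sum_i\tr(A_{x_i}D^2u)H_{p_i} - D_xH\cdot D_p\tr(AD^2u)$ termwise, estimating each summand separately and absorbing second-derivative pieces by Cauchy--Schwarz. This does \emph{not} work for general $\tau\in[0,1)$: within each pair there is a single term that is too large on its own but cancels exactly against its twin. Concretely, $D_xb$ and $D_pb$ both contain the piece $z_1=\tfrac{1}{f_m}(D_pH\cdot D_xf_m)(D_xH\cdot D_pH)$ (with opposite signs after the dot products), and $A_{x_i}$ and $D_p\tr(AD^2u)$ both produce $z_2=\chi_w\tr(D_{pp}^2H D_{xx}^2u)(D_pH\cdot D_xH)$. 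Individually, $|z_1|$ is of order $C(x,t)(1+|f|^{\tau/2}+\chi^{(1+\tau)/2})(1+|D_xu|^{3+\tau})$ and $|z_2|$ is of order $|D_{xx}^2u|(1+|D_xu|^{2+\tau})$; after Cauchy--Schwarz on $z_2$ the residual is $C(x,t)(1+|D_xu|^{4+2\tau})$. In the ``degree'' count where $|f|\sim|D_xu|^2$, these are of order $3+2\tau$ and $4+2\tau$, strictly exceeding the $3+\tau$ cap that every term in the right-hand side of \eqref{eq:L(H)} respects. So without explicitly exhibiting and cancelling $z_1$ and $z_2$, the inequality \eqref{eq:L(H)} is false for $\tau>0$. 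This cancellation --- not the sharp $\tfrac12$ constant in \eqref{eq:Lutilde} --- is the ``structural cancellation'' the paper alludes to as the reason for taking $\Phi=H$ rather than $|D_xu|^2$: the terms $z_1,z_2$ arise precisely because $D_{\bar x}\Phi=(D_xH,0)$ and $D_q\Phi=(D_pH,0)$ are ``dual'' in a way that makes the dangerous $\chi_w$-- and $D_xf_m/f_m$--contributions appear symmetrically. You should group the terms as $(K_3+K_4)$ and $(K_5+K_6)$ and expand them jointly before estimating.
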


\begin{proof}
Throughout this proof, the number $C=C(x,t)$ may increase at each
step, with its size depending on $(x,t)$ only monotonically through
$\frac{1}{\chi}$ and $|h_{w}|$. For this reason, there is no loss
of generality in assuming
\begin{equation}
\frac{1}{\chi}+|h_{w}|\leq C.\label{eq:C(x,t) WLOG}
\end{equation}
Observe first that, since $D^{2}\utilde=D^{2}u$, one has $-\tr(A(x,Du)D^{2}\utilde)=-b(x,Du)$.
Therefore,
\begin{align*}
-\tr(A(x,Du)D^{2}v_{1}) & =-\utilde b(x,Du)-D\utilde A\cdot D\utilde,\\
-D_{q}\tr(A(x,Du)D^{2}u)\cdot Dv_{1}+D_{q}b(x,Du)\cdot Dv_{1} & =-D_{q}\tr(AD^{2}u)\cdot\utilde D\utilde+D_{q}b\cdot\utilde D\utilde.
\end{align*}
Consequently, by (\ref{eq:matrix}) and (\ref{eq:qg2}),
\begin{align}
L_{u}(v_{1})= & -\utilde b-D\utilde A\cdot D\utilde+\utilde(-D_{q}\tr(AD^{2}u)\cdot D\utilde+D_{q}b\cdot D\utilde)\label{eq:linearization utilde}\\
= & -\utilde b-D\utilde(D_{p}H,-1)\otimes(D_{p}H,-1)\cdot D\utilde-\chi D_{x}\utilde D_{pp}^{2}H\cdot D_{x}\utilde\nonumber \\
 & +\utilde(-D_{q}\tr(AD^{2}u)\cdot D\utilde+D_{q}b\cdot D\utilde)\nonumber \\
\leq & -|-\utilde_{t}+D_{p}H\cdot D_{x}u|^{2}-\frac{1}{C_{0}}\chi|D_{x}\utilde|^{2}+\utilde(-b-D_{q}\tr(ADu)\cdot D\utilde+D_{q}b\cdot D\utilde)\nonumber \\
= & -|-\utilde_{t}+D_{p}H\cdot D_{x}u|^{2}-\frac{1}{C_{0}}\chi|D_{x}\utilde|^{2}+\utilde(J_{1}+J_{2}+J_{3}).\nonumber 
\end{align}
The next task will be to estimate the terms $J_{i}$. In view of (\ref{eq:first order term}),
(\ref{eq: H>=00003D|p|^2-C}), (\ref{eq: Hx, Hxx bound}), (\ref{eq: Hxp bound}),
(\ref{growth chi}), and (\ref{eq:fx, fxx bound}),
\begin{multline}
|J_{1}|=|b(x,Du)|=|-D_{x}H\cdot D_{p}H+D_{x}f\cdot D_{p}H-\chi\tr(D_{xp}^{2}H)|\\
\leq C(1+|D_{x}u|^{2+\tau}+|f|^{(1+\tau)/2}(1+|D_{x}u|)+|f|(1+|D_{x}u|^{\tau})).\label{eq: bound J1}
\end{multline}
As for $J_{2}$, (\ref{eq: H>=00003D|p|^2-C}), (\ref{eq:qg2}), (\ref{eq:D_q(A)}),
and (\ref{eq:C(x,t) WLOG}) imply that
\begin{multline}
|J_{2}|=|-D_{q}\tr(AD^{2}u)\cdot D\utilde|=|2(-D_{p}HD_{xx}^{2}u+D_{x}u_{t})D_{pp}^{2}H\cdot D_{x}u-\chi D_{p}(\tr(D_{pp}^{2}HD_{xx}^{2}u))\cdot D_{x}u\\
+2hh_{w}\tr(D_{pp}^{2}HD_{xx}^{2}u)(\utilde_{t}-D_{p}H\cdot D_{x}u)|\leq C(1+|-D_{x}u_{t}+D_{p}HD_{xx}^{2}u|^{2}+|D_{x}u|^{2}+\chi\\
+\chi|D_{xx}^{2}u|^{2})+\frac{1}{8(||\utilde||+1)}|-\utilde_{t}+D_{p}H\cdot D_{x}u|^{2}.\label{eq: bound J2}
\end{multline}
By assumptions (\ref{eq:Hxxp, Hxpp bound}), (\ref{growth chi}),
(\ref{eq: fmx bound}), and (\ref{eq:fx, fxx bound}), together with
(\ref{eq: Hx, Hxx bound}), (\ref{eq:D_q(b)}) and (\ref{eq:C(x,t) WLOG}),
\begin{multline}
|J_{3}|=|D_{q}b(x,Du)\cdot D\utilde|=|-(D_{p}HD_{xp}^{2}H)\cdot D_{x}u-(D_{x}HD_{pp}^{2}H)\cdot D_{x}u\\
+(D_{x}fD_{pp}^{2}H)\cdot D_{x}u+\left(\frac{1}{f_{m}}D_{x}f_{m}\cdot D_{p}H\right)(-\utilde_{t}+D_{p}H\cdot D_{x}u)-\chi D_{p}(\tr(D_{xp}^{2}H))\cdot D_{x}u\\
+\chi_{w}\tr(D_{xp}^{2}H)(\utilde_{t}-D_{p}H\cdot D_{x}u)|\leq C(1+|D_{x}u|^{2+\tau}+(1+|f|^{(1+\tau)/2})|D_{x}u|\\
+(1+\frac{1}{\chi})(1+|f|^{\tau/2})(1+|D_{x}u|)|-\utilde_{t}+D_{p}H\cdot D_{x}u|+(1+|f|)(1+|D_{x}u|^{\tau})\\
+(1+|D_{x}u|^{\tau})|-\utilde_{t}+D_{p}H\cdot D_{x}u|)\leq C(1+|D_{x}u|^{2+\tau}+|f|^{(1+\tau)/2}|D_{x}u|\\
+|f|^{\tau}|D_{x}u|^{2}+|f||D_{x}u|^{\tau})+\frac{1}{8(||\utilde||+1)}|-\utilde_{t}+D_{p}H\cdot D_{x}u|^{2}.\label{eq: bound J3}
\end{multline}
Finally, using (\ref{eq: bound J1}), (\ref{eq: bound J2}), and (\ref{eq: bound J3})
in (\ref{eq:linearization utilde}) yields
\begin{align*}
L_{u}(v_{1})\leq-\frac{3}{4} & |-\utilde_{t}+D_{p}HD_{x}u|^{2}-\frac{1}{C_{0}}\chi|D_{x}u|^{2}+C(1+|D_{x}u|^{2+\tau}+\chi|D_{xx}^{2}u|^{2}+|f|^{\tau}|D_{x}u|^{2}\\
 & +|f|(1+|D_{x}u|^{\tau})+|f|^{(1+\tau)/2}|D_{x}u|+\chi+|-D_{x}u_{t}+D_{xx}^{2}uD_{p}H|^{2}),
\end{align*}
which proves (\ref{eq:Lutilde}). 

Next is the proof of (\ref{eq:L(H)}). In view of Lemma \ref{lem:(Bernstein's-method)},
recalling that $\tilde{H}(x,t,Du):=H(x,Du)$,
\begin{align*}
L_{u}v_{2}= & L_{u}(\tilde{H}(x,t,Du))\\
= & -\tr(D_{qq}^{2}\tilde{H}D^{2}uAD^{2}u)-\tr(AD_{\xo\xo}^{2}\tilde{H})-2\tr(AD^{2}uD_{\xo q}^{2}\tilde{H})-D_{p}\tilde{H}\cdot D_{x}b\\
 & +D_{\xo}\tilde{H}\cdot D_{q}b+\sum_{i=1}^{d}\tr(A_{x_{i}}D^{2}u)\tilde{H}_{p_{i}}-D_{\xo}\tilde{H}\cdot D_{q}\tr(AD^{2}u),
\end{align*}
and Lemma \ref{lem:trace lemma} then implies
\begin{align}
L_{u}v_{2}\leq & \frac{-3}{4C_{0}}|-D_{x}u_{t}+D_{p}HD_{xx}^{2}u|^{2}-\frac{3\chi}{4C_{0}^{2}}|D_{xx}^{2}u|^{2}-\frac{1}{4C_{0}}\tr(\tilde{I}D^{2}uAD^{2}u)\label{eq:Lv2 intermediate}\\
 & -\tr(AD_{\xo\xo}^{2}\tilde{H})-2\tr(AD^{2}uD_{\xo q}^{2}\tilde{H})-D_{p}H\cdot D_{x}b+D_{x}H\cdot D_{p}b\nonumber \\
 & +\sum_{i=1}^{d}\tr(A_{x_{i}}D^{2}u)H_{p_{i}}-D_{x}H\cdot D_{p}\tr(AD^{2}u).\nonumber \\
= & \frac{-3}{4C_{0}}|-D_{x}u_{t}+D_{p}HD_{xx}^{2}u|^{2}-\frac{3\chi}{4C_{0}^{2}}|D_{xx}^{2}u|^{2}-\frac{1}{4C_{0}}\tr(\tilde{I}D^{2}uAD^{2}u)\nonumber \\
 & +K_{1}+K_{2}+K_{3}+K_{4}+K_{5}+K_{6}.\nonumber 
\end{align}
As before, we proceed to estimate the $K_{i}$. Starting with $K_{1}$,
we observe that, by (\ref{eq:matrix}), (\ref{eq:strconv}), (\ref{eq: H>=00003D|p|^2-C}),
and (\ref{eq: Hxp bound}),
\begin{align}
|K_{1}|= & |\tr(AD_{\xo\xo}^{2}\tilde{H})|\leq C(1+|D_{x}u|^{1+\tau})|A|\leq C(1+|D_{x}u|^{1+\tau})(1+|D_{x}u|^{2}+\chi).\label{eq:K1 bound}
\end{align}
Similarly,  using the Cauchy--Schwarz inequality, 
\begin{multline}
|K_{2}|=|2\tr(AD^{2}u(\tilde{I}D_{\xo q}^{2}\tilde{H}))|=2|\tr(D_{\xo q}^{2}\tilde{H}A(\tilde{I}D^{2}u)^{T})|\leq\frac{1}{4C_{0}}\tr((\tilde{I}D^{2}u)A(\tilde{I}D^{2}u)^{T})\\
+C\tr(D_{\xo q}^{2}\tilde{H}A(D_{\xo q}^{2}\tilde{H})^{T})\leq\frac{1}{4C_{0}}\tr(\tilde{I}D^{2}uAD^{2}u)+C|D_{xp}^{2}H|^{2}(1+|D_{x}u|^{2}+\chi)\\
\leq\frac{1}{4C_{0}}\tr(\tilde{I}D^{2}uAD^{2}u)+C(1+|D_{x}u|^{2(1+\tau)}+\chi(1+|D_{x}u|^{2\tau})).\label{eq: K2 bound}
\end{multline}
Next, we will estimate $|K_{3}+K_{4}|$. Differentiating the equation
(\ref{eq:first order term}) with respect to $x$, we obtain
\begin{multline}
D_{x}b(x,p,s)=-D_{xx}^{2}H\cdot D_{p}H-D_{x}HD_{xp}^{2}H+D_{p}HD_{xx}^{2}f+\frac{1}{f_{m}}D_{p}H(D_{x}f_{m}\otimes(-D_{x}f+D_{x}H))\\
+D_{x}fD_{xp}^{2}H+\left(D_{x}f-mD_{x}f_{m}+\frac{mf_{mm}}{f_{m}}D_{x}f\right)\tr(D_{xp}^{2}H)-\chi_{w}D_{x}H\tr(D_{xp}^{2}H)-\chi D_{x}\tr(D_{xp}^{2}H).\label{eq:Lv2 inter 1}
\end{multline}
Consequently, (\ref{eq: H>=00003D|p|^2-C}), (\ref{eq: Hx, Hxx bound}),
(\ref{eq: Hxp bound}), (\ref{growth chi}), (\ref{eq: fmx bound}),
(\ref{eq:fx, fxx bound}), and (\ref{eq:C(x,t) WLOG}) yield
\begin{align*}
|-D_{x}b(x,p,s)+\frac{1}{f_{m}}D_{p}HD_{x}f_{m}\otimes D_{x}H|\leq & C(1+|p|^{2+\tau}+(1+|p|)|f|^{(1+\tau)/2}+|f|(1+|p|^{\tau})),
\end{align*}
so that, setting $z_{1}=\frac{1}{f_{m}}(D_{p}H\cdot D_{x}f_{m})(D_{x}H\cdot D_{p}H),$
\begin{multline}
|K_{3}+z_{1}|=|-D_{x}b(x,Du)\cdot D_{p}H+z_{1}|\leq C(1+|D_{x}u|^{2+\tau}+(1+|D_{x}u|)|f|^{(1+\tau)/2}\\
+|f|(1+|D_{x}u|^{\tau}))(1+|D_{x}u|)\leq C(1+|D_{x}u|^{3+\tau}+(1+|D_{x}u|^{2})|f|^{(1+\tau)/2}+|f|(1+|D_{x}u|^{1+\tau})).\label{eq: K3 bound}
\end{multline}
On the other hand, by (\ref{eq:D_q(b)}),
\begin{multline*}
|K_{4}-z_{1}|=|D_{p}b\cdot D_{x}H-z_{1}|=|-(D_{p}HD_{xp}^{2}H)\cdot D_{x}H-(D_{x}HD_{pp}^{2}H)\cdot D_{x}H\\
+(D_{x}fD_{pp}^{2}H)\cdot D_{x}H+z_{1}-\chi D_{p}\tr(D_{xp}^{2}H)\cdot D_{x}H+\chi_{w}\tr(D_{xp}^{2}H)(D_{p}H\cdot D_{x}H)-z_{1}|.
\end{multline*}
The terms $z_{1}$ and $-z_{1}$ then cancel out, and therefore (\ref{eq:Hxxp, Hxpp bound}),
(\ref{eq: Hx, Hxx bound}), (\ref{eq: Hxp bound}), and (\ref{growth chi})
yield
\begin{align}
|K_{4}-z_{1}|\leq & C(1+|D_{x}u|^{2+2\tau}+|f|^{(1+\tau)/2}(1+|D_{x}u|^{1+\tau})+|f||D_{x}u|^{2\tau}).\label{eq:K4 bound}
\end{align}
The inequalities (\ref{eq: K3 bound}) and (\ref{eq:K4 bound}) thus
imply
\begin{equation}
|K_{3}+K_{4}|\leq C(1+|D_{x}u|^{3+\tau}+(1+|D_{x}u|^{2})|f|^{(1+\tau)/2}+|f|(1+|D_{x}u|^{1+\tau})).\label{eq:K3+K4 bound}
\end{equation}
The terms $K_{5}$ and $K_{6}$ will also be treated jointly. Let
$(x,p,s)\in\mathbb{T}^{d}\times\mathbb{R}^{d+1}$, and set $w=-s+H(x,p)$.
It follows from (\ref{eq:matrix}), (\ref{eq: fmx bound}), (\ref{eq:Hxxp, Hxpp bound}),
and (\ref{growth chi}) that
\begin{multline*}
A_{x_{i}}(x,p,s)=(D_{p}H_{x_{i}},0)\otimes(D_{p}H,-1)+(D_{p}H,-1)\otimes(D_{p}H_{x_{i}},0)+\chi_{w}H_{x_{i}}D_{qq}^{2}\tilde{H}\\
+O(1+|w|^{\tau/2}+\chi{}^{(1+\tau)/2}+\chi(1+|p|)^{-1+\tau})\tilde{I}.
\end{multline*}
Therefore, using (\ref{eq:D_q(A)}), and writing $z_{2}=$$\chi_{w}\tr(D_{pp}^{2}HD_{xx}^{2}u)(D_{p}H\cdot D_{x}H)$,\vspace*{-0.24cm}
\begin{multline*}
|K_{5}+K_{6}|=|\sum_{i=1}^{d}\tr(A_{x_{i}}D^{2}u)H_{p_{i}}-D_{p}\tr(AD^{2}u)\cdot D_{x}H|\leq|2(D_{p}HD_{xx}^{2}u-D_{x}u_{t})D_{px}^{2}H\cdot D_{p}H+z_{2}\\
+2(-D_{p}HD_{xx}^{2}u+D_{x}u_{t})D_{pp}^{2}H\cdot D_{x}H-\chi D_{p}(\tr(D_{pp}^{2}HD_{xx}^{2}u))\cdot D_{x}H-z_{2}|+C(1+|f|^{\tau/2}\\
+\chi^{(1+\tau)/2}+\chi(1+|D_{x}u|)^{-1+\tau})|D_{xx}^{2}u|(1+|D_{x}u|).
\end{multline*}
 Once more, cancellation occurs and, consequently, (\ref{eq:C(x,t) WLOG}),
(\ref{eq:qg2}), (\ref{eq: H>=00003D|p|^2-C}), (\ref{eq: Hx, Hxx bound}),
and (\ref{eq: Hxp bound}) imply that 
\begin{multline}
|K_{5}+K_{6}|\leq\frac{1}{4C_{0}}|D_{p}HD_{xx}^{2}u-D_{x}u_{t}|^{2}+\frac{\chi}{4C_{0}^{2}}|D_{xx}^{2}u|^{2}+C(1+|D_{x}u|^{2+2\tau}\\
+(1+|D_{x}u|^{2})(1+|f|^{\tau}+\chi^{(1+\tau)/2})+\chi(1+|D_{x}u|^{2\tau})).\label{eq:K5+K6 bound}
\end{multline}
Using (\ref{eq:K1 bound}), (\ref{eq: K2 bound}), (\ref{eq:K3+K4 bound}),
and (\ref{eq:K5+K6 bound}) in (\ref{eq:Lv2 intermediate}) yields
(\ref{eq:L(H)}), completing the proof.
\end{proof}
We can now obtain the a priori gradient bound in terms of bounds for
the solution $u$ and the terminal density $m(\cdot,T)$, which were
obtained in the previous subsection. 
\begin{lem}
\label{lem: gradient a priori bound} Assume that (\ref{eq:SE}) holds,
and let $(u,m)\in C^{3}(\overline{Q_{T}})\times C^{2}(\overline{Q_{T}})$
be a solution to (\ref{eq:mfg}). For $K>0$, set 
\begin{align}
\beta_{K}= & ||f||_{C^{1}(\mathbb{T}^{d}\times[\frac{1}{K},K])}+||D_{x}g||_{C^{1}(\mathbb{T}^{d}\times[\frac{1}{K},K])}+\left\Vert \frac{1}{\chi}\right\Vert _{C^{0}(\mathbb{T}^{d}\times[-K,\infty))}+||h_{w}||_{C^{0}(\mathbb{T}^{d}\times[-K,\infty))}.\label{eq:beta_K}
\end{align}
There exist constants $C,$ $C_{1}$ with
\[
C=C(C_{1},\beta_{C_{1}}),\;C_{1}=C_{1}\left(C_{0},T,\frac{1}{T},\frac{1}{1-\tau},||u||_{C^{0}(\QT)},\max m(T),\frac{1}{\min m(T)},f_{0}(\min_{\mathbb{T}^{d}}m(T))^{-}\right),
\]
such that
\[
||Du||_{C^{0}(\QT)}\leq C.
\]
\end{lem}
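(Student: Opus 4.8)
The plan is a Bernstein-type maximum-principle argument built on Lemma~\ref{lem: L(v1) and L(v2)}, reducing everything to bounding $\widetilde M:=\max_{\overline{Q_T}}H(\cdot,D_xu)$. If $\widetilde M$ stays below a threshold to be fixed in the course of the proof, then $\|D_xu\|_{C^0}$ is controlled by \eqref{eq: H>=00003D|p|^2-C} and $\|u_t\|_{C^0}$ by Corollary~\ref{cor:utbound}, so I may assume $\widetilde M$ is large, in which case $\|H(\cdot,D_xu)\|_{C^0}=\widetilde M$. The key ingredient is a \emph{conditional lower bound for the density}: at any point where $H(x,D_xu)\geq\widetilde M-1$, the inequality $u_t\leq\|H(\cdot,D_xu)\|_{C^0}-f_0(\eta_0)=\widetilde M-f_0(\eta_0)$ of Corollary~\ref{cor:utbound} gives $f=-u_t+H\geq f_0(\eta_0)-1$; together with Corollary~\ref{cor:apriorim} and \eqref{growth chi} this forces $\chi\geq\chi_0>0$, $|h_w|\leq C$ and $|f|+\chi\leq C(1+|D_xu|^2)$ at such points, so the location-dependent constant $C(x,t)$ of Lemma~\ref{lem: L(v1) and L(v2)} is there bounded by a constant depending only on the quantities allowed in $C_1$ and $\beta_{C_1}$.

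I then fix $\utilde=u+c$ with $c$ so negative that $\utilde\leq-1$ on $\overline{Q_T}$ (so $\osc v_1$ is controlled by $\|u\|_{C^0}$, where $v_1=\utilde^2/2$), and set $\Phi=\lambda v_1+v_2$ with $v_2=H(\cdot,D_xu)$ and $\lambda>0$ small, to be chosen last. Since $v_1$ is a priori bounded, at any maximum point $(x^\ast,t^\ast)$ of $\Phi$ one has $v_2(x^\ast,t^\ast)\geq\widetilde M-\lambda\osc v_1$, so once $\lambda\leq1/\osc v_1$ the conditional bounds above apply at $(x^\ast,t^\ast)$. If $(x^\ast,t^\ast)$ is interior, then $L_u\Phi\geq0$ there; adding $\lambda$ times \eqref{eq:Lutilde} to \eqref{eq:L(H)}, I pick $\lambda$ small (depending only on $C_0$ and the constant of the previous paragraph) so that the negative second-order terms $-\tfrac{1}{2C_0}|{-}D_xu_t+D_pHD_{xx}^2u|^2-\tfrac{\chi}{2C_0^2}|D_{xx}^2u|^2$ of \eqref{eq:L(H)} absorb the second-order terms $\lambda C(x,t)\big(\chi|D_{xx}^2u|^2+|{-}D_xu_t+D_{xx}^2uD_pH|^2\big)$ from \eqref{eq:Lutilde}. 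Since $f\geq f_0(\eta_0)-1$ at $(x^\ast,t^\ast)$, \eqref{qg1} and \eqref{eq: H>=00003D|p|^2-C} give $-u_t+D_pH\cdot D_xu=f+(D_pH\cdot D_xu-H)\geq\tfrac1{C_0}|D_xu|^2-C$, so the remaining negative term $-\tfrac\lambda2|{-}u_t+D_pH\cdot D_xu|^2$ dominates by $-\tfrac{\lambda}{8C_0^2}|D_xu|^4$; and $|f|+\chi\leq C(1+|D_xu|^2)$ reduces every remaining term on the right to at most $C(1+|D_xu|^{3+\tau})$. As $\tau<1$ this forces $|D_xu(x^\ast,t^\ast)|\leq C$, hence $\widetilde M\leq v_2(x^\ast,t^\ast)+1\leq C$.

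It remains to exclude a boundary maximum when $\widetilde M$ is large. On $\mathbb{T}^d\times\{0\}$ the boundary condition forces $m(\cdot,0)=m_0$, and on $\mathbb{T}^d\times\{T\}$ Corollary~\ref{cor:apriorim} gives two-sided positive bounds for $m(\cdot,T)$; in either case $|{-}u_t+H|=|f(\cdot,m)|\leq C$ on $\partial Q_T$, so $u_t=H$ up to a bounded term there, and the coefficients arising from differentiating $Nu=0$ tangentially are bounded in terms of $\beta_{C_1}$. Differentiating the boundary condition in the tangential directions and using $D_x\Phi=0$ at the boundary maximum, I solve for $D_xu_t$ in terms of $D_xu$ and bounded data: at $t=T$ this uses the strict monotonicity of $g$, i.e.\ $\partial_sB=g_m/f_m>0$ (which is also what makes $f_m/g_m$ bounded), giving $D_xu_t=-(f_m/g_m+\lambda\utilde)D_xu$ up to a bounded term, while at $t=0$ it gives $D_xu_t=-\lambda\utilde D_xu$ up to a bounded term. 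Substituting into $\partial_\nu\Phi=\pm\partial_t\Phi$ and invoking \eqref{qg1}, \eqref{eq: H>=00003D|p|^2-C}, $u_t=H$ (up to bounded) and $\utilde\leq-1$, one gets $\partial_t\Phi\leq-c|D_xu|^2+C(1+|D_xu|)$ at a maximum on $t=T$ (provided $\lambda$ is small, depending only on controlled quantities and the lower bound of $g_m/f_m$) and $-\partial_t\Phi\geq c|D_xu|^2-C(1+|D_xu|)$ at a maximum on $t=0$; each contradicts the Hopf inequality $\partial_\nu\Phi\geq0$ once $|D_xu|$ is large. Hence the maximum of $\Phi$ is interior and the previous paragraph applies, yielding the bound on $\|D_xu\|_{C^0}$; $\|u_t\|_{C^0}$ then follows from Corollary~\ref{cor:utbound}.

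I expect this simultaneous boundary analysis at $t=0$ and $t=T$ to be the main obstacle, and it is precisely what forces the unconventional choices $v_2=H(\cdot,D_xu)$ (rather than $|D_xu|^2$ or $|Du|^2$) and $\utilde<0$: the sign of $\utilde$ makes the Hopf inequality at $t=0$ incompatible with a large gradient (with $\utilde>0$ it would be compatible), while the structural cancellations produced by taking $v_2=H$ in \eqref{eq:Lutilde}--\eqref{eq:L(H)} are what keep all the second-order error terms absorbable against the good term of \eqref{eq:L(H)}. The interior estimate, by contrast, is routine bookkeeping once Lemma~\ref{lem: L(v1) and L(v2)} and the conditional density bound are in hand.
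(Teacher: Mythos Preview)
Your approach is essentially the paper's Bernstein argument, and the interior analysis is correct. There are two points worth flagging in the boundary treatment.

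\textbf{Choice of $\utilde$.} You take $\utilde=u+c\leq -1$ everywhere, whereas the paper adds a linear-in-$t$ shift so that $\utilde(\cdot,0)\leq-1$ \emph{and} $\utilde(\cdot,T)\geq+1$. At $t=T$ the paper's sign makes the $c_1\utilde(-u_t+D_pH\cdot D_xu)$ term help directly, so the boundary estimate there does not require any smallness of $c_1$ tied to $f_m/g_m$. With your choice $\utilde\leq-1$, that term has the wrong sign at $t=T$ and must be absorbed by the $\frac{f_m}{g_m}D_pH\cdot D_xu$ contribution; this does work for $\lambda$ small against $\min(f_m/g_m)$, exactly as you say, but it introduces an extra dependence that the paper avoids. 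At $t=0$ both choices coincide.

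\textbf{Sign slip at $t=0$.} As written, ``$-\partial_t\Phi\geq c|D_xu|^2-C(1+|D_xu|)$'' is \emph{compatible} with the Hopf condition $\partial_\nu\Phi=-\partial_t\Phi\geq0$, so it yields no contradiction. The computation you outline actually gives the opposite inequality: at a boundary maximum on $\{t=0\}$ one has $D_xu_t=-\lambda\utilde D_xu+O(1)$, hence
\[
\partial_t\Phi=\lambda\utilde\,u_t+D_pH\cdot D_xu_t=\lambda\utilde\,(u_t-D_pH\cdot D_xu)+O(1+|D_xu|).
\]
Since $u_t-D_pH\cdot D_xu=-(D_pH\cdot D_xu-H)-f(x,m_0)\leq -H+C$ and $\utilde\leq-1$, the product is $\geq\lambda(H-C)$, so $\partial_t\Phi\geq \tfrac{\lambda}{C_0}|D_xu|^2-C(1+|D_xu|)$, contradicting $\partial_t\Phi\leq0$. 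With this correction your boundary argument goes through.

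Finally, your ``differentiate $Nu=0$ tangentially and plug into $\partial_\nu\Phi$'' is equivalent to the paper's use of the tangential operator $T_uv=-v_t+D_pH\cdot D_xv$: at a boundary maximum $D_xv=0$ forces $T_uv=\mp\partial_t v$, and the identity $T_u(H(\cdot,D_xu))=D_pH\cdot D_x(f(\cdot,m))$ on $\partial Q_T$ encodes exactly the differentiated boundary condition. The paper's formulation simply packages the same computation without explicitly solving for $D_xu_t$.
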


\begin{proof}
As was mentioned, the proof will proceed through Bernstein's method.
By Corollary \ref{cor:utbound}, it is sufficient to bound the space
gradient. Since the estimate will be up to the boundary, as in \cite{Lions},
we linearize the HJ equation that holds at the extremal times: 
\[
T_{u}v=-v_{t}+D_{p}H(x,Du)D_{x}v.
\]
We now normalize $u$ to have a prescribed sign at the initial and
terminal times. That is, we set
\[
\utilde=u+||u||_{C^{0}(\QT)}+1-\frac{2(||u||_{C^{0}(\QT)}+1)}{T}(T-t),
\]
so that 
\begin{equation}
|\utilde|\leq C,\quad\utilde(\cdot,0)\leq-1,\;\utilde(\cdot,T)\geq1,\label{signs utilde}
\end{equation}
and define
\[
v(x,t)=H(x,D_{x}u)+\frac{c_{1}}{2}\utilde^{2},
\]
where $0<c_{1}\leq1$ is a constant to be chosen later. Let $(x_{0},t_{0})\in\overline{Q_{T}}$
be a point where $v$ achieves its maximum value. We will distinguish
three cases:

\textbf{Case 1.} $t_{0}=T$. Then $D_{x}v=0$, $v_{t}\geq0$. Therefore,
(\ref{signs utilde}), (\ref{eq:strconv}) (\ref{qg1}), (\ref{eq: Hx, Hxx bound}),
and the HJ equation in (\ref{eq:mfg}), together with the fact that
$m(\cdot,T)=g^{-1}(\cdot,u(\cdot,T))$, yield
\begin{align*}
0\geq T_{u}v= & T_{u}(H(x_{0},D_{x}u))+c_{1}\utilde(-\utilde_{t}+D_{p}H(x_{0},D_{x}u)\cdot D_{x}\utilde)\\
= & D_{x}(f(x_{0},m(x_{0},T)))\cdot D_{p}H(x_{0},D_{x}u)+c_{1}\utilde(-u_{t}+D_{p}H(x_{0},D_{x}u)\cdot D_{x}u-C)\\
\geq & \left(D_{x}f-\frac{f_{m}}{g_{m}}D_{x}g+\frac{f_{m}}{g_{m}}D_{x}u\right)\cdot D_{p}H+c_{1}\utilde(-u_{t}+2H-C)\\
\geq & -C\left(1+\frac{f_{m}}{g_{m}}\right)(1+|D_{x}u|)+\frac{f_{m}}{g_{m}}D_{p}H\cdot D_{x}u+c_{1}\utilde(f+H-C)\\
\geq & -C\left(1+\frac{f_{m}}{g_{m}}\right)(1+|D_{x}u|)+2\left(c_{1}\utilde+\frac{f_{m}}{g_{m}}\right)H.
\end{align*}
Thus, by (\ref{eq: H>=00003D|p|^2-C}),
\[
|H(x_{0},Du(x_{0},t_{0}))|\leq C.
\]

\textbf{Case 2.} $t_{0}=0$. Similarly, we obtain $D_{x}v=0$, $v_{t}\leq0$,
and, since $\utilde(\cdot,0)\leq-1$,
\begin{multline*}
0\leq T_{u}v=D_{x}(f(x_{0},m_{0}(x_{0})))\cdot D_{p}H+c_{1}\utilde(-u_{t}+D_{p}H\cdot D_{x}u-C)\\
\leq C(1+|D_{x}u|)+c_{1}\utilde(f(x_{0},m_{0})+H-C)\leq C(1+|D_{x}u|)+C+c_{1}\utilde H.
\end{multline*}
This implies $-c_{1}\utilde(H(x_{0},D_{x}u))\leq C(1+|D_{x}u|),$
and so, we conclude once more that
\[
|H(x_{0},Du(x_{0},t_{0}))|\leq C.
\]

\textbf{Case 3.} $0<t_{0}<T$. Then $Dv=0$, $D^{2}v\leq0$, which
yields
\[
0\leq L_{u}v.
\]
In order to make use of Lemma \ref{lem: L(v1) and L(v2)}, it is necessary
to eliminate the $(x_{0},t_{0})$ dependence of the ``constant''~$C(x_{0},t_{0})$
from the Lemma, which amounts to establishing an a priori upper bound
on the quantities $1/\chi$ and $|h_{w}|$ at the point $(x_{0},t_{0})$.
By (\ref{eq:f polynomial growth}) and (\ref{growth chi}), $1/\chi$
and $|h_{w}|=|\chi_{w}/2\sqrt{\chi}|$ are both bounded above as $w\rightarrow\infty$,
so it is enough to establish a lower bound for $w=f(x_{0},m(x_{0},t_{0}))$.
By Corollary \ref{cor:utbound}, there exists a point $(x_{1},t_{1})\in\partial Q_{T}$
where $u_{t}$ achieves its maximum value. Then, since $(x_{0},t_{0})$
is a maximum point for $v$, and the initial and terminal densities
are both bounded below a priori,
\begin{multline*}
f(x_{0},m(x_{0},t_{0}))=-u_{t}(x_{0},t_{0})+H(x_{0},D_{x}u(x_{0},t_{0}))\geq-u_{t}(x_{1},t_{1})+H(x_{1},D_{x}u(x_{1},t_{1}))-\frac{c_{1}}{2}||\utilde||_{C^{0}(\QT)}^{2}\\
=f(x_{1},m(x_{1},t_{1}))-\frac{c_{1}}{2}||\utilde||_{C^{0}(\QT)}^{2}\geq f_{0}(m(x_{1},t_{1}))-C\geq-C.
\end{multline*}
This estimate, together with (\ref{qg1}) and (\ref{eq: H>=00003D|p|^2-C}),
allows us to identify the dominant power of $|D_{x}u|$ in the linearization,
\begin{align}
|-u_{t}+D_{p}H\cdot D_{x}u|^{2} & \geq(f+H)^{2}-C\geq\frac{1}{2C_{0}^{2}}|D_{x}u|^{4}-C.\label{eq: dominant term}
\end{align}
Now, because of the form of the estimate in Lemma \ref{lem: L(v1) and L(v2)},
it is also necessary to be able to compare powers of $|f|$ with powers
of $|D_{x}u|$. By Corollary \ref{cor:utbound} and (\ref{eq: H>=00003D|p|^2-C}),
\begin{align}
f(x_{0},m(x_{0},t_{0})) & \leq-u_{t}(x_{0},t_{0})+H(x_{0},D_{x}u(x_{0},t_{0}))\leq C+2H(x_{0},D_{x}u(x_{0},t_{0}))\leq C(1+|D_{x}u|^{2}).\label{eq: f upper bound}
\end{align}
With these preliminaries done, we now apply Lemma \ref{lem: L(v1) and L(v2)},
obtaining
\begin{multline}
0\leq L_{u}(v)=L_{u}\tilde{H}+c_{1}L_{u}(\frac{\utilde^{2}}{2})\leq\frac{-1}{2C_{0}}|-D_{x}u_{t}+D_{p}HD_{xx}^{2}u|^{2}-\frac{\chi}{2C_{0}^{2}}|D_{xx}^{2}u|^{2}+C(1+|D_{x}u|^{3+\tau}\\
+\chi(1+|D_{x}u|^{1+\tau})+\chi^{(1+\tau)/2}|D_{x}u|^{2}+|f|(1+|D_{x}u|^{1+\tau})+|D_{x}u|^{2}|f|^{(1+\tau)/2})\\
-\frac{c_{1}}{2}|-u_{t}+D_{p}H\cdot D_{x}u|^{2}-\frac{c_{1}}{C_{0}}\chi|D_{x}u|^{2}+Cc_{1}(\chi|D_{xx}^{2}u|^{2}+|-D_{x}u_{t}+D_{xx}^{2}uD_{p}H|^{2}).\label{eq:L_u(v) quote weak}
\end{multline}
Applying (\ref{eq: dominant term}) and (\ref{eq: f upper bound})
yields
\begin{multline*}
0\leq-\frac{c_{1}}{4C_{0}^{2}}|D_{x}u|^{4}-\frac{c_{1}}{C_{0}}\chi|D_{x}u|^{2}-\frac{1}{2C_{0}}|-D_{x}u_{t}+D_{p}HD_{xx}^{2}u|^{2}-\frac{\chi}{2C_{0}^{2}}|D_{xx}^{2}u|^{2}\\
+C(1+|D_{x}u|^{3+\tau}+\chi(1+|D_{x}u|^{1+\tau})+\chi^{(1+\tau)/2}|D_{x}u|^{2})+Cc_{1}(\chi|D_{xx}^{2}u|^{2}+|-D_{x}u_{t}+D_{xx}^{2}uD_{p}H|^{2})\\
\leq-\frac{c_{1}}{4C_{0}^{2}}|D_{x}u|^{4}-\frac{c_{1}}{C_{0}}\chi|D_{x}u|^{2}-\frac{1}{2C_{0}}|-D_{x}u_{t}+D_{p}HD_{xx}^{2}u|^{2}-\frac{\chi}{2C_{0}^{2}}|D_{xx}^{2}u|^{2}+C(1+|D_{x}u|^{3+\tau}\\
+\chi(1+|D_{x}u|^{1+\tau}))+(\frac{C}{c_{1}}+\frac{c_{1}}{2C_{0}}\chi)|D_{x}u|^{2}+Cc_{1}(\chi|D_{xx}^{2}u|^{2}+|-D_{x}u_{t}+D_{xx}^{2}uD_{p}H|^{2}).
\end{multline*}
Now, fix $c_{1}$ satisfying $c_{1}<\frac{1}{4C_{0}C(1+C_{0})}$,
where $C$ is as in the previous line. This gives
\begin{align*}
0\leq & -\frac{c_{1}}{4C_{0}^{2}}|D_{x}u|^{4}-\frac{1}{2C_{0}}\chi(c_{1}|D_{x}u|^{2}-2CC_{0}(1+|D_{x}u|^{1+\tau}))+C(1+|D_{x}u|^{3+\tau})+\frac{C}{c_{1}}|D_{x}u|^{2},
\end{align*}
which may be rearranged as
\[
\frac{c_{1}}{4C_{0}^{2}}|D_{x}u|^{4}+\frac{1}{2C_{0}}\chi(c_{1}|D_{x}u|^{2}-2CC_{0}|D_{x}u|^{1+\tau}-2CC_{0})\leq C(1+|D_{x}u|^{3+\tau})+\frac{C}{c_{1}}|D_{x}u|^{2}.
\]
This finally implies that
\[
c_{1}|D_{x}u|^{2}-2CC_{0}|D_{x}u|^{1+\tau}-2CC_{0}\leq0\;\text{ or \;}\frac{c_{1}}{4C_{0}^{2}}|D_{x}u|^{4}\leq C(1+|D_{x}u|^{3+\tau})+\frac{C}{c_{1}}|D_{x}u|^{2},
\]
either of which yields
\[
|H(x_{0},D_{x}u(x_{0},t_{0}))|\leq C.
\]
\end{proof}
We now summarize all of the a priori bounds obtained in this section.
\begin{thm}
\label{thm:Full C^1 a priori bound} Assume that (\ref{eq:SE}) holds,
let $(u,m)\in C^{3}(\overline{Q_{T}})\times C^{2}(\overline{Q_{T}})$
be a solution to (\ref{eq:mfg}), and let $\beta$ be defined by (\ref{eq:beta_K}).
Then there exist constants $L,L_{1},K,K_{1}$, with
\[
L=\left(L_{1},|g_{1}f_{0}^{-1}(L_{1})|,|g_{0}f_{1}^{-1}(-L_{1})|,g_{0}^{-1}g_{1}f_{0}^{-1}(L_{1}),\frac{1}{g_{1}^{-1}g_{0}f_{1}^{-1}(-L_{1}))}\right),\;\;\;L_{1}=L_{1}(C_{0},T),
\]
\[
K=(K_{1},\beta_{K_{1}}),\;\;\;K_{1}=K_{1}\left(L,\frac{1}{T},\frac{1}{1-\tau},f_{0}\left(\frac{1}{L}\right)^{-}\right),
\]
such that
\[
||u||_{C^{0}(\overline{Q_{T}})}+||m(T)||_{C^{0}(\mathbb{T}^{d})}+\left\Vert \frac{1}{m(T)}\right\Vert _{C^{0}(\mathbb{T}^{d})}\leq L\text{\;\; and }\;\;\;||Du||_{C^{0}(\overline{Q_{T}})}\leq K.
\]
\end{thm}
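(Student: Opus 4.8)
The statement is essentially a bookkeeping consolidation of Lemma \ref{lem:aprioriu}, Corollary \ref{cor:apriorim}, Corollary \ref{cor:utbound}, and Lemma \ref{lem: gradient a priori bound}, so the proof is a matter of chaining the dependencies together correctly and verifying that every constant ultimately reduces to the stated list of quantities. The plan is to proceed in two stages, first establishing the bound on $u$ and $m(\cdot,T)$, and then feeding that into the gradient estimate.

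\emph{Stage 1 (the $L^{\infty}$ bound on $u$ and the two-sided bound on $m(\cdot,T)$).} First I would invoke Lemma \ref{lem:aprioriu}, which gives a constant $C = C(C_{0},\tau,T)$ with
\[
g_{0}f_{1}^{-1}(-C)-C(e^{CT}-e^{Ct})\leq u(x,t)\leq g_{1}f_{0}^{-1}(C)+C(e^{CT}-e^{Ct}).
\]
Setting $L_{1}=L_{1}(C_{0},\tau,T)$ to be this constant $C$ (enlarged if necessary), the right-hand side is bounded above by $|g_{1}f_{0}^{-1}(L_{1})|+L_{1}e^{L_{1}T}$ and the left-hand side below by $-|g_{0}f_{1}^{-1}(-L_{1})|-L_{1}e^{L_{1}T}$, so $\|u\|_{C^{0}(\overline{Q_{T}})}$ is bounded by a quantity depending monotonically on $L_{1}$, $|g_{1}f_{0}^{-1}(L_{1})|$, and $|g_{0}f_{1}^{-1}(-L_{1})|$. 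Next, Corollary \ref{cor:apriorim}, applied with this same $C=L_{1}$, gives
\[
g_{1}^{-1}g_{0}f_{1}^{-1}(-L_{1})\leq m(x,T)\leq g_{0}^{-1}g_{1}f_{0}^{-1}(L_{1}),
\]
so $\|m(T)\|_{C^{0}}\leq g_{0}^{-1}g_{1}f_{0}^{-1}(L_{1})$ and $\|1/m(T)\|_{C^{0}}\leq 1/\bigl(g_{1}^{-1}g_{0}f_{1}^{-1}(-L_{1})\bigr)$ (note this lower bound is strictly positive since $f(\cdot,0)\equiv-\infty$ under \eqref{eq:SE}, so $f_{1}^{-1}(-L_{1})>0$). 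Defining $L$ to be the maximum of these three expressions (equivalently, a constant depending monotonically on the displayed list), we obtain the first inequality of the theorem.

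\emph{Stage 2 (the gradient bound).} I would then apply Lemma \ref{lem: gradient a priori bound}. That lemma produces constants $C$ and $C_{1}$ with $C=C(C_{1},\beta_{C_{1}})$ and
\[
C_{1}=C_{1}\bigl(C_{0},\tau,T,\|u\|_{C^{0}(\overline{Q_{T}})},\max m(T),\tfrac{1}{\min m(T)},f_{0}(\min_{\mathbb{T}^{d}}m(T))^{-}\bigr),
\]
such that $\|Du\|_{C^{0}(\overline{Q_{T}})}\leq C$. By Stage 1, each of $\|u\|_{C^{0}}$, $\max m(T)$, and $1/\min m(T)$ is bounded by $L$; and since $m(\cdot,T)\geq 1/L$, monotonicity of $f_{0}$ gives $f_{0}(\min m(T))^{-}\leq f_{0}(1/L)^{-}$. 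Hence $C_{1}$ is bounded by a constant $K_{1}=K_{1}\bigl(L,f_{0}(1/L)^{-}\bigr)$. Since $\beta_{K}$ is increasing in $K$, $\beta_{C_{1}}\leq\beta_{K_{1}}$, and therefore $C$ is bounded by a constant $K=K(K_{1},\beta_{K_{1}})$; this is the second inequality.

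\emph{Main point of care.} There is no deep obstacle here — the two substantive estimates have already been proved — so the work is entirely in the dependency tracking: one must check that at no stage does a constant secretly depend on the ellipticity constant of the regularized problem (it does not, by design of Lemmas \ref{lem:aprioriu} and \ref{lem: gradient a priori bound}) and that the monotonicity conventions for the constants $C(K_{1},\dots,K_{M})$ are respected when composing the increasing functions $f_{0},f_{1},g_{0},g_{1}$ and their inverses. The one genuinely substantive remark to record is that \eqref{eq:SE} is needed to guarantee $f_{1}^{-1}(-L_{1})>0$, so that the lower bound on $m(\cdot,T)$ — and hence the finiteness of $K_{1}$ through the term $f_{0}(1/L)^{-}$ — is meaningful; in the degenerate case \eqref{eq:DE} this chain breaks, which is precisely why a separate weak theory is developed in Section \ref{sec:Weak-solutions}.
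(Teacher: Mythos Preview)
Your proposal is correct and matches the paper's approach exactly: the paper's own proof is the single sentence ``This result follows simply by the successive application of Lemma \ref{lem:aprioriu}, Corollary \ref{cor:apriorim}, and Lemma \ref{lem: gradient a priori bound},'' and you have carried out precisely that chain with the dependency tracking made explicit. Your added remarks on why \eqref{eq:SE} is needed for the positivity of $f_{1}^{-1}(-L_{1})$ and on the monotonicity conventions are accurate and useful elaborations, not departures.
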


\begin{proof}
This result follows simply by the successive application of Lemma
\ref{lem:aprioriu}, Corollary \ref{cor:apriorim}, and Lemma \ref{lem: gradient a priori bound}.
\end{proof}
The following variation of Theorem \ref{thm:Full C^1 a priori bound}
shows that, in the standard case where $H(x,p)=H(p)-V(x)$ and $f(x,m)=f(m)$,
the condition (\ref{growth chi}) which requires $f$ to grow at most
polynomially may be significantly relaxed.
\begin{thm}
\label{thm:V(x) a priori C^1}The conclusion of Theorem \ref{thm:Full C^1 a priori bound}
still holds if condition (\ref{growth chi}) is replaced by:
\begin{align}
\tag{HFX*}D_{xp}^{2}H & ,D_{xm}^{2}f\equiv0,\text{ and }\limsup_{x\in\mathbb{T}^{d},w\rightarrow\infty}|h_{w}(x,w)|<\infty.\label{eq:separated potential}
\end{align}
\end{thm}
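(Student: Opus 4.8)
The plan is to re-examine the a priori estimates of Section~\ref{sec:A-priori-estimates} and to check that the hypothesis (\ref{growth chi})---the only place the polynomial growth of $f$ is used---can be removed in favor of (\ref{eq:separated potential}). The first thing I would do is record the structural consequences of (\ref{eq:separated potential}): since $D_{xp}^{2}H\equiv0$ and $D_{xm}^{2}f\equiv0$, one has $H(x,p)=\hat{H}(p)+H_{2}(x)$ and $f(x,m)=F(m)+G_{2}(x)$ with $H_{2},G_{2}\in C^{4}(\mathbb{T}^{d})$, so $D_{p}H$, $D_{pp}^{2}H$, $D_{ppp}^{3}H$ are independent of $x$, $D_{xxp}^{3}H\equiv D_{xpp}^{3}H\equiv0$, $f_{m}$ and hence $\partial_{x}f_{m}$ vanish identically, and $D_{x}f=D_{x}G_{2}$, $D_{xx}^{2}f=D_{xx}^{2}G_{2}$ are bounded, so (\ref{eq: fmx bound}), (\ref{eq:fx, fxx bound}) and (\ref{eq:Hxxp, Hxpp bound}) hold automatically after enlarging $C_{0}$. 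Moreover $\chi(x,w)=\tilde{\chi}(w-G_{2}(x))$ and $h(x,w)=\tilde{h}(w-G_{2}(x))$ for the one--variable maps $\tilde{\chi}(\sigma)=F^{-1}(\sigma)F_{m}(F^{-1}(\sigma))$, $\tilde{h}=\sqrt{\tilde{\chi}}$; hence $\limsup_{w\to\infty}|h_{w}|<\infty$, and since $h_{w}$ is continuous on $\mathbb{T}^{d}\times\mathbb{R}$, both $h_{w}$ and $1/\chi$ (the latter also using (\ref{eq:f polynomial growth})) are bounded on $\mathbb{T}^{d}\times[-K,\infty)$ for every $K>0$---precisely the type of quantity controlled by $\beta_{K}$ in (\ref{eq:beta_K}).

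Then I would go through the estimates in order. In Lemma~\ref{lem:aprioriu} the sole appeal to (\ref{growth chi}) is in (\ref{bound tr(Hpx)}), whose left side is $\chi\,\tr(D_{xp}^{2}H)\equiv0$, and $D_{x}f\cdot D_{p}H=O(1+|D_{x}u|)$ with bounded coefficient, so the maximum--principle argument is unchanged; Corollary~\ref{cor:apriorim}, Corollary~\ref{cor:utbound}, and Lemmas~\ref{lem:(Bernstein's-method)} and \ref{lem:trace lemma} never use (\ref{growth chi}). The heart of the matter is Lemma~\ref{lem: L(v1) and L(v2)}: under (\ref{eq:separated potential}) the terms $J_{1}$, $J_{3}$ collapse to $O(1+|D_{x}u|)$ because $\chi\,\tr(D_{xp}^{2}H)$, $\partial_{x}f_{m}$ and all $|f|$--dependent contributions vanish; $D_{x}b$ and $D_{p}b$ reduce to $O(1+|D_{x}u|)$ and $O(1)$; $K_{2}\equiv0$ since $D_{\xo q}^{2}\widetilde{H}\equiv0$; $A_{x_{i}}=\chi_{w}(\partial_{x_{i}}H_{2}-\partial_{x_{i}}G_{2})\,D_{qq}^{2}\widetilde{H}$; and the cancellations of the original proof persist---indeed $z_{1}\equiv0$ outright, and the $z_{2}$--cancellation between $K_{5}$ and $K_{6}$ leaves at most one further $\chi_{w}$--term. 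Every surviving $\chi$--derivative enters through $\chi_{w}=2h\,h_{w}$ multiplied by $\tr(D_{pp}^{2}H\,D_{xx}^{2}u)=O(|D_{xx}^{2}u|)$ and by either $(-\utilde_{t}+D_{p}H\cdot D_{x}u)$ or $D_{p}H=O(1+|D_{x}u|)$; writing $\chi_{w}\,\tr(D_{pp}^{2}HD_{xx}^{2}u)=2h_{w}\cdot\sqrt{\chi}\,\tr(D_{pp}^{2}HD_{xx}^{2}u)$ and applying Young's inequality yields $\epsilon\chi|D_{xx}^{2}u|^{2}+C|h_{w}|^{2}(1+|D_{x}u|^{2})$ or $\epsilon|{-\utilde_{t}}+D_{p}H\cdot D_{x}u|^{2}+C|h_{w}|^{2}\chi|D_{xx}^{2}u|^{2}$, both absorbed into the negative terms of Lemma~\ref{lem:trace lemma} once $|h_{w}|$ is bounded as in (\ref{eq:C(x,t) WLOG}). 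Thus (\ref{eq:Lutilde}) and (\ref{eq:L(H)}) hold a fortiori, with all $|f|$--terms and all $1/\chi$ dependence absent.

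Finally, in Lemma~\ref{lem: gradient a priori bound} the a priori bound $f(x_{0},m(x_{0},t_{0}))\geq-C$ at an interior maximum of $v$ follows exactly as before from Corollary~\ref{cor:utbound} and the lower bounds on $m_{0}$ (via (\ref{eq:M1})) and on $m(\cdot,T)$ (via Corollary~\ref{cor:apriorim}), none of which relies on (\ref{growth chi}); together with (\ref{eq:f polynomial growth}) and the $\limsup$--bound of (\ref{eq:separated potential}) this makes $1/\chi$ and $|h_{w}|$ bounded at $(x_{0},t_{0})$, so the dominant--term analysis (\ref{eq: dominant term})--(\ref{eq: f upper bound}) and the subsequent algebra go through untouched, and Cases~1--2 use only boundedness of $D_{x}f$, $D_{x}g$ and the a priori control of $m(\cdot,T)$. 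Assembling these as in Theorem~\ref{thm:Full C^1 a priori bound} gives the claim. The step that demands the most care is the bookkeeping in Lemma~\ref{lem: L(v1) and L(v2)}: one must verify that no term involving $\chi_{w}/\chi$ or a bare power of $\chi_{w}$ is left over---equivalently, that each surviving $\chi_{w}$ is paired with exactly one factor $\sqrt{\chi}\,|D_{xx}^{2}u|$---since (\ref{eq:separated potential}) controls only $h_{w}$, and not $\chi_{w}$, as $\chi\to0$.
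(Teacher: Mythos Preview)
Your proposal is correct and follows exactly the same approach as the paper's proof: both trace through each instance in Lemmas~\ref{lem:aprioriu}, \ref{lem: L(v1) and L(v2)}, and \ref{lem: gradient a priori bound} where (\ref{growth chi}) was invoked and verify that under (\ref{eq:separated potential}) the relevant terms either vanish (those containing $D_{xp}^{2}H$ or $D_{xm}^{2}f$) or are uniformly bounded (those containing $D_{x}f$, $D_{x}H$), with the $\limsup$ bound on $|h_{w}|$ supplying the one remaining ingredient in Case~3 of Lemma~\ref{lem: gradient a priori bound}. Your write-up is considerably more detailed than the paper's three-sentence argument---in particular your explicit tracking of the surviving $\chi_{w}=2hh_{w}$ terms and their absorption via Young's inequality is exactly the mechanism the paper uses implicitly---but note the slip ``$f_{m}$ and hence $\partial_{x}f_{m}$ vanish identically'': you mean that $f_{m}$ is independent of $x$, hence $\partial_{x}f_{m}\equiv0$.
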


\begin{proof}
We simply address all of the instances in which condition (\ref{growth chi})
has been used so far. In the proofs of Lemma \ref{lem:aprioriu},
Corollary \ref{cor:apriorim}, and Lemma \ref{lem: L(v1) and L(v2)},
(\ref{growth chi}) was exclusively used to estimate either space
derivatives $D_{x}f,\;D_{x}H$, or terms that involve mixed derivatives
$D_{xm}^{2}f,\;$$D_{xp}^{2}H$. With (\ref{eq:separated potential})
in place, such terms are, respectively, either bounded in $C^{1}$
norm or trivially zero. Condition (\ref{growth chi}) was also used
in the proof of Lemma \ref{lem: gradient a priori bound} in order
to obtain a bound for $|h_{w}|$ as $w\rightarrow\infty$, but this
bound exists here by assumption.
\end{proof}
We note that the condition that (\ref{eq:separated potential}) imposes
on $h$ may be equivalently rewritten, in terms of $f$, as
\[
\limsup_{x\in\mathbb{T}^{d},m\rightarrow\infty}\frac{1}{mf_{m}}\left|m\frac{f_{mm}}{f_{m}}+1\right|^{2}<\infty.
\]
This condition, in particular, allows for $f$ to be combinations
of powers $m^{\alpha}$,$-m^{-\beta}$, exponentials $e^{m}$,$-e^{1/m}$,
and such typical examples, as long as one has the required blowup
near $m=0$ and as $m\rightarrow\infty$.

\section{Classical solutions\label{sec:Classical-solutions}}

To obtain classical solutions, it is necessary to have Hölder estimates
for the gradient of the solution in terms of the $C^{1}$ norm. The
following Lemma, which is merely a restatement of Theorems \ref{thm:Ladyzhenskaya interior}
and \ref{thm:Lieberman oblique boundary} in the context of the MFG
system, provides such an estimate.
\begin{lem}
\label{lem:a priori C1,gamma} Let $(u,m)\in C^{3}(\overline{Q_{T}})\times C^{2}(\overline{Q_{T}})$
be a solution to (\ref{eq:ellip}), and set $K=||u||_{C^{1}(\QT)}.$
Let $\mu_{K},\;\lambda_{K}>0$ be such that (\ref{eq:interior estimate, ellipticity and coeff bound})
holds in $\mathbb{T}_{K}^{d}$, and the conditions (\ref{eq:boundary muK})
and
\begin{equation}
\lambda_{K}\leq D_{q}B\cdot\nu\label{eq:oblique}
\end{equation}
hold in $\partial Q_{T,K}$. There exist constants $C>0,\;0<\gamma<1$,
with
\[
C=C\left(K,\frac{\mu_{K}}{\lambda_{K}}\right),\;\gamma=\gamma\left(K,\frac{\mu_{K}}{\lambda_{K}}\right),
\]
such that
\[
[Du]_{\gamma,\overline{Q_{T}}}\leq C.
\]
\end{lem}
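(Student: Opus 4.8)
The plan is to localize the problem and apply the two cited a priori Hölder-gradient estimates (Theorem \ref{thm:Ladyzhenskaya interior} in the interior and Theorem \ref{thm:Lieberman oblique boundary} near the boundary) to the MFG operator $(Q,N)$, then patch the resulting local estimates together by a compactness/covering argument on the compact manifold-with-boundary $\overline{Q_T}$. The first observation is that $Q_T = \mathbb{T}^d \times (0,T)$ has the structure needed: its interior is handled by Theorem \ref{thm:Ladyzhenskaya interior}, while a neighborhood of each boundary point $(x_0,0) \in \mathbb{T}^d\times\{0\}$ or $(x_0,T)\in\mathbb{T}^d\times\{T\}$ is, after a smooth change of coordinates flattening the (already flat) boundary and rescaling, diffeomorphic to a half-ball $B^+\cup B^0$; the spatial periodicity causes no trouble because near such a point we may work in a single Euclidean chart of the torus. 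Both theorems are \emph{a priori} estimates requiring only $u\in C^2$ of the relevant region plus the structural bounds \eqref{eq:interior estimate, ellipticity and coeff bound}, \eqref{eq:boundary muK}, \eqref{eq:oblique}, which are exactly the hypotheses we have assumed to hold in $\mathbb{T}^d_K$ and $\partial Q_{T,K}$.

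Concretely, I would proceed in the following steps. First, from $K = \|u\|_{C^1(\overline{Q_T})}$ we know $(x,t,u(x,t),Du(x,t))$ ranges in the set over which $\lambda_K,\mu_K$ are defined, so the coefficient bounds in \eqref{eq:interior estimate, ellipticity and coeff bound} hold along the solution, and likewise the boundary bounds \eqref{eq:boundary muK} and the obliqueness \eqref{eq:oblique} hold on $\partial Q_{T,K}$. Second, I apply Theorem \ref{thm:Ladyzhenskaya interior} with, say, $V = \mathbb{T}^d\times(T/4, 3T/4)$ (or any fixed interior slab), obtaining $[Du]_{\gamma_1, V}\le C_1$ with $\gamma_1,C_1$ depending only on $K$ and $\mu_K/\lambda_K$ (the dependence on $\mathrm{dist}(V,\partial Q_T)^{-1}$ is absorbed since $T$ is fixed — if one wants to avoid $T$ appearing, one uses a fixed proportion of $T$). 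Third, for the boundary: around each point of $\mathbb{T}^d\times\{0\}$ pick a coordinate chart and rescale so the configuration matches $B^+, B^0$; the equation $Qu=0$ in $B^+$ and $Nu = B(x,t,u,Du) = 0$ on $B^0$ hold, the regularity hypotheses on $A,b,B,D_qB$ follow from assumptions \hyperref[eq:M1]{(M)}, \hyperref[eq:strconv]{(H)}, \hyperref[eq:f polynomial growth]{(F)}, \hyperref[eq: gx control]{(G)} together with \eqref{eq:SE} (which makes $\chi$ and hence $A,b$ smooth in their arguments on the relevant compact sets), the ellipticity $A\ge\lambda_K I$ holds, and the sign condition $\lambda_K \le -B_s$ needed by Theorem \ref{thm:Lieberman oblique boundary} is precisely the obliqueness \eqref{eq:oblique} rewritten: since $\nu = -(0,\dots,0,1)$ on the $t=0$ face, $D_qB\cdot\nu = -B_s$. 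Theorem \ref{thm:Lieberman oblique boundary} then yields $[Du]_{\gamma_2,B'}\le C_2$ on the rescaled interior half-ball. The analogous argument on $\mathbb{T}^d\times\{T\}$ (where $\nu = (0,\dots,0,1)$, so $D_qB\cdot\nu = B_s$, and the corresponding sign comes out right because the obliqueness \eqref{eq:oblique} is stated as $\lambda_K \le D_qB\cdot\nu$ on all of $\partial Q_{T,K}$) gives a matching estimate. Fourth, I undo the rescaling (which only changes constants in a controlled, $K$- and $\lambda_K/\mu_K$-dependent way since the scaling factor is universal), cover $\overline{Q_T}$ by finitely many such interior and boundary neighborhoods, take $\gamma = \min(\gamma_1,\gamma_2)$ and add up the local semi-norms, using the standard fact that a function whose Hölder semi-norm is controlled on each member of a finite cover by connected open sets has globally controlled Hölder semi-norm (with an extra additive contribution from $\|Du\|_{C^0}\le K$ to bridge between patches). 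This produces $[Du]_{\gamma,\overline{Q_T}}\le C$ with $C,\gamma$ depending only on $K$ and $\mu_K/\lambda_K$, as claimed.

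The main obstacle — really the only nontrivial point — is the \textbf{matching of sign conventions in the obliqueness condition at the two faces}, and verifying that the hypothesis $\lambda_K\le -B_s$ of Theorem \ref{thm:Lieberman oblique boundary} is genuinely available at \emph{both} $t=0$ and $t=T$. Theorem \ref{thm:Lieberman oblique boundary} is stated for the model face $B^0 = \{t=0\}$ with outward normal $-(0,\dots,1)$ and demands $-B_s \ge \lambda_K$; after reflecting the $t=T$ face to this model (via $t\mapsto T-t$), the sign of $B_s$ flips, so one needs $B_s\ge\lambda_K$ there, and indeed the single hypothesis \eqref{eq:oblique}, namely $\lambda_K\le D_qB\cdot\nu$ on all of $\partial Q_{T,K}$, encodes exactly this because $\nu$ itself flips between the two faces. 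One should also check that at $t=0$ the boundary operator $B(x,0,z,p,s) = -s+H(x,p)-f(x,m_0(x))$ has $B_s = -1$, so obliqueness is automatic there with $\lambda_K \le 1$, while at $t=T$, $B(x,T,z,p,s) = -g(x,f^{-1}(x,-s+H(x,p)))+z$ has $B_s = g_m\cdot f^{-1}_{\,\cdot}\cdot(-1)\cdot(-1) = g_m / f_m > 0$ by strict monotonicity of $g$ — this is the precise point where the strict monotonicity of the terminal condition is what makes the problem a genuine (Robin-type) oblique derivative problem and not merely a degenerate one, and it is exactly what the hypothesis \eqref{eq:oblique} of the Lemma is asserting. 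Everything else is a routine localization-and-covering argument, with no new estimates required beyond what the cited theorems provide.
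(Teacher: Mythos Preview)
Your proposal is correct and follows essentially the same approach as the paper: verify the obliqueness condition $D_qB\cdot\nu>0$ explicitly at each face (computing $-B_s=1$ at $t=0$ and $B_s=g_m/f_m>0$ at $t=T$), then apply Theorems \ref{thm:Ladyzhenskaya interior} and \ref{thm:Lieberman oblique boundary} locally and extract a finite subcover of $\overline{Q_T}$. The paper's proof is terser---it simply notes that the boundary is already flat so no change of coordinates is needed---but the content is identical, including the identification of the obliqueness verification as the one nontrivial point.
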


\begin{proof}
The only thing to remark is that in order to apply Theorem \ref{thm:Lieberman oblique boundary},
it is necessary to verify that $\lambda_{K}$ can be chosen to satisfy
(\ref{eq:oblique}), or, in other words, that $N$ is indeed an oblique
boundary operator. This follows directly from (\ref{eq:boundary}),
since
\begin{align*}
D_{q}B(x,0,z,q)\cdot\nu(x,0)=-B_{s}(x,0,z,q)=1>0,\\
D_{q}B(x,T,z,q)\cdot\nu(x,T)=B_{s}(x,T,z,q)=g_{m}f_{w}^{-1}=\frac{g_{m}}{f_{m}}>0.
\end{align*}
Therefore, the result follows by applying Theorems \ref{thm:Ladyzhenskaya interior}
and \ref{thm:Lieberman oblique boundary} locally, and extracting
a finite subcover of $\QT$. The use of Theorem \ref{thm:Lieberman oblique boundary}
is particularly straighforward since the boundary of $Q_{T}$ is already
flat.
\end{proof}
The strategy to prove existence will be to use the nonlinear method
of continuity, by constructing an explicit homotopy $(Q^{\theta},N^{\theta})_{\theta\in[0,1]}$
between (\ref{eq:ellip}) and an elliptic problem that comes from
a much simpler MFG system, and trivially has a smooth solution. For
each $\theta\in[0,1]$ and each $(x,p,m)\in\mathbb{T}^{d}\times\mathbb{R}^{d}\times(0,\infty),$
define
\begin{align*}
H^{\theta}(x,p) & =\theta H(x,p)+(1-\theta)(\frac{1}{2}|p|^{2}+f(x,1)),\;g^{\theta}(x,m)=\theta g(x,m)+(1-\theta)m,\;m_{0}^{\theta}(x)=\theta m_{0}(x)+(1-\theta),
\end{align*}
and consider the family of MFG systems
\begin{equation}
\tag{\ensuremath{\text{MFG}_{\theta}}}\begin{cases}
-u_{t}+H^{\theta}(\cdot,D_{x}u)=f(\cdot,m),\;\;u(\cdot,T)=g^{\theta}(\cdot,m(\cdot,T))\\
m_{t}-\textrm{div}(mD_{p}H^{\theta}(\cdot,D_{x}u))=0,\;\;m(\cdot,0)=m_{0}^{\theta}(\cdot)
\end{cases}\label{eq:MFG theta}
\end{equation}
We observe that, when $\theta=0$, the unique solution is $(u,m)\equiv(1,1)$.
Let $(Q^{\theta}u$ $,N^{\theta}u)$ be the operators for the corresponding
elliptic problem associated to (\ref{eq:MFG theta}), and let $A^{\theta},$
$b^{\theta}$, and $B^{\theta}$ be their coefficients. The following
straightforward Lemma is a version of Theorem \ref{thm:Full C^1 a priori bound},
tailored to the family (\ref{eq:MFG theta}), that also includes the
Hölder estimates of Lemma \ref{lem:a priori C1,gamma}, and provides
a priori bounds that hold uniformly in $\theta$.
\begin{lem}
\label{lem:uniform in theta bounds}Assume that (\ref{eq:SE}) holds.
For each $\theta\in[0,1]$, let $(u^{\theta},m^{\theta})\in C^{3,\alpha}(\overline{Q_{T}})\times C^{2,\alpha}(\overline{Q_{T}})$
be a solution to (\ref{eq:MFG theta}). Then there exist constants
$C>0$ and $0<\gamma<1$, independent of $\theta$, such that
\[
||u^{\theta}||_{C^{1,\gamma}(\QT)}\leq C.
\]
\end{lem}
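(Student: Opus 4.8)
The plan is to reduce Lemma~\ref{lem:uniform in theta bounds} to a direct application of the a priori estimates already established, checking only that the relevant structural constants can be taken independent of $\theta$. First I would verify that the homotopy data $(H^{\theta},f,g^{\theta},m_{0}^{\theta})$ satisfies the standing assumptions \hyperref[eq:M1]{(M)}, \hyperref[eq:strconv]{(H)}, \hyperref[eq:f polynomial growth]{(F)}, \hyperref[eq: gx control]{(G)}, and (\ref{eq:SE}) with a single constant $C_{0}'$ valid for all $\theta\in[0,1]$. This is where the convexity of the constraints matters: since $D_{pp}^{2}H^{\theta}=\theta D_{pp}^{2}H+(1-\theta)I$, the uniform convexity bound (\ref{eq:strconv}) is preserved with the same $C_{0}$; the quadratic growth (\ref{qg1}) and the third-derivative decay (\ref{eq:qg2}) pass through the convex combination because the extra term $\tfrac12|p|^{2}$ satisfies them exactly; and the space-oscillation bounds (\ref{eq:Hxxp, Hxpp bound}) are unaffected since the added term is $x$-independent. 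For $g^{\theta}$, one has $g^{\theta}_{m}=\theta g_{m}+(1-\theta)>0$, and (\ref{eq: gx control}) is checked directly using that $m\mapsto m$ and $g$ both respect the sup/inf structure; note $f$ itself, and hence $\chi$, $h$, are untouched by the homotopy, so (\ref{growth chi}), (\ref{eq: fmx bound}), (\ref{eq:fx, fxx bound}), and (\ref{eq:SE}) hold uniformly. Finally $m_{0}^{\theta}=\theta m_{0}+(1-\theta)$ is still a $C^{4}$ strictly positive probability density, with $\min m_{0}^{\theta}\geq\min(\min m_{0},1)>0$ and $\max m_{0}^{\theta}\leq\max(\max m_{0},1)$ uniformly.

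Next I would invoke Theorem~\ref{thm:Full C^1 a priori bound} applied to each system (\ref{eq:MFG theta}): since all the constants entering $L,L_{1},K,K_{1}$ in that theorem depend only on $C_{0}$, $\tau$, $T$, and monotone functions of $f_{0}$, $f_{1}$, $g_{0}$, $g_{1}$, $\chi$, $h$, and the bounds on $m_{0}$, all of which we have just seen are uniform in $\theta$, we obtain a $\theta$-independent bound $\|u^{\theta}\|_{C^{0}(\QT)}+\|m^{\theta}(T)\|_{C^{0}}+\|1/m^{\theta}(T)\|_{C^{0}}\leq L$ and $\|Du^{\theta}\|_{C^{0}(\QT)}\leq K$. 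In particular setting $K'=\|u^{\theta}\|_{C^{1}(\QT)}\leq L+K$, which is uniform in $\theta$. One small point to be careful about: the bound on $m^{\theta}(\cdot,T)$ from Corollary~\ref{cor:apriorim} involves $g^{\theta}_{0}$ and $g^{\theta}_{1}$, so I should record that these have a common range and common modulus independent of $\theta$, which again follows from (\ref{eq: gx control}) for $g^{\theta}$.

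Then I would pass to Lemma~\ref{lem:a priori C1,gamma} to upgrade the $C^{1}$ bound to a $C^{1,\gamma}$ bound. Here one needs ellipticity and coefficient bounds $\lambda_{K'},\mu_{K'}$ for the operators $(Q^{\theta},N^{\theta})$ on the compact set $\overline{Q}_{T,K'}$, together with the obliqueness condition (\ref{eq:oblique}). From (\ref{eq:matrix}), the ellipticity constant $\lambda_{K'}$ is controlled from below by $\min\{\chi, (1/C_{0})\}$ over the relevant compact range of $w=-s+H^{\theta}$, which by the uniform $C^{1}$ bound lies in a fixed compact subset of $(f(\cdot,0),\infty)=(-\infty,\infty)$, so $\chi$ is bounded below by a uniform positive constant there (using (\ref{eq:SE}) and (\ref{eq:f polynomial growth})); note $\chi$ does not depend on $\theta$ at all. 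The upper bounds on $|A^{\theta}|,|DA^{\theta}|,|b^{\theta}|$ and the boundary derivatives in (\ref{eq:boundary muK}) are uniform by the same reasoning as in the derivation of the a priori estimates, since every coefficient is built from $H^{\theta}$, $f$, $g^{\theta}$ and their derivatives, all uniformly bounded on the compact range. The obliqueness $D_{q}B^{\theta}\cdot\nu$ equals $1$ at $t=0$ and $g^{\theta}_{m}/f_{m}\geq (1-\theta)/f_{m}\wedge \ldots$ at $t=T$; since $g^{\theta}_{m}\geq\min(g_{m},1)$ and $f_{m}$ is bounded above on the compact range, this is bounded below by a uniform positive $\lambda_{0}$. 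Applying Lemma~\ref{lem:a priori C1,gamma} with these uniform constants yields $[Du^{\theta}]_{\gamma,\QT}\leq C$ with $C,\gamma$ independent of $\theta$, and combined with the uniform $C^{1}$ bound this gives $\|u^{\theta}\|_{C^{1,\gamma}(\QT)}\leq C$.

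The main obstacle, such as it is, is not any single hard estimate but the bookkeeping needed to confirm that the obliqueness constant at $t=T$ stays bounded away from zero uniformly in $\theta$ as $\theta\to1$ — i.e., that $g^{\theta}_{m}/f_{m}$ does not degenerate — and that the ellipticity constant $\lambda_{K'}$ stays positive, which hinges on the $C^{1}$ bound confining $w=-u^{\theta}_{t}+H^{\theta}(\cdot,D_{x}u^{\theta})$ to a fixed compact set on which $\chi>0$; this is precisely where (\ref{eq:SE}) is used, since under (\ref{eq:DE}) the infimum of $\chi$ over such a set could be zero. Everything else is a routine re-run of the arguments in Section~\ref{sec:A-priori-estimates} with the observation that none of the constants there were ever allowed to depend on anything $\theta$-specific.
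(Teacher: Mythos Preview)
Your plan is correct and follows essentially the same route as the paper: verify the structural assumptions hold for $(H^{\theta},f,g^{\theta},m_{0}^{\theta})$ with a $\theta$-independent $C_{0}$, apply Theorem~\ref{thm:Full C^1 a priori bound}, and then upgrade via Lemma~\ref{lem:a priori C1,gamma} using uniform ellipticity and obliqueness on the compact range. The one place where the paper does more than you indicate is the ``small point'' you flag about $g_{0}^{\theta},g_{1}^{\theta}$: the uniform bounds on $(g_{0}^{\theta})^{-1}g_{1}^{\theta}f_{0}^{-1}(L_{1})$ and $1/\bigl((g_{1}^{\theta})^{-1}g_{0}^{\theta}f_{1}^{-1}(-L_{1})\bigr)$ do \emph{not} follow from (\ref{eq: gx control}) alone (which only guarantees a common range), but require the explicit monotonicity computation $(g_{0}^{\theta})^{-1}g_{1}^{\theta}\leq g_{0}^{-1}g_{1}$, proved in the paper by writing $g_{0}^{\theta}(g_{0}^{-1}g_{1})=\theta g_{1}+(1-\theta)g_{0}^{-1}g_{1}\geq g_{1}^{\theta}$ since $g_{0}^{-1}g_{1}\geq\mathrm{id}$.
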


\begin{proof}
The strategy here is to apply Theorem \ref{thm:Full C^1 a priori bound}
and Lemma \ref{lem:a priori C1,gamma} to the corresponding MFG system
(\ref{eq:MFG theta}) that arises from the new data $H^{\theta},g^{\theta},m_{0}^{\theta}$,
to prove that those results lead to bounds that are uniform in $\theta$.
Let $\beta$ be defined by (\ref{eq:beta_K}), and, for each $\theta\in[0,1]$,
let $C_{0,\theta}$ and $0\leq\tau^{\theta}<1$ be any two constants
large enough that the inequalities (\ref{eq:strconv}), (\ref{qg1}),
(\ref{eq:qg2}), (\ref{eq:Hxxp, Hxpp bound}), (\ref{eq: fmx bound}),
(\ref{eq:fx, fxx bound}), (\ref{eq: H>=00003D|p|^2-C}), (\ref{eq: Hx, Hxx bound}),
(\ref{eq: Hxp bound}), and (\ref{eq: old C1}) all hold when $H$,
$g,$ $m_{0}$ are replaced by $H^{\theta}$, $g^{\theta},$ $m_{0}^{\theta}$.
Theorem \ref{thm:Full C^1 a priori bound} then yields constants $L_{\theta},L_{1,\theta},K_{\theta},K_{1,\theta}$,
with
\[
L_{\theta}=\left(L_{1,\theta},|g_{1}^{\theta}f_{0}^{-1}(L_{1,\theta})|,|g_{0}^{\theta}f_{1}^{-1}(-L_{1,\theta})|,(g_{0}^{\theta})^{-1}g_{1}^{\theta}f_{0}^{-1}(L_{1,\theta}),\frac{1}{(g_{1}^{\theta})^{-1}g_{0}^{\theta}f_{1}^{-1}(-L_{1,\theta})}\right),\;L_{1,\theta}=M(C_{0,\theta},T),
\]
\[
K_{\theta}=K_{\theta}(K_{1,\theta},\beta_{K_{1,\theta}}),\;K_{1,\theta}=K_{1,\theta}\left(L_{\theta},\frac{1}{T},\frac{1}{1-\tau^{\theta}},f_{0}\left(\frac{1}{L_{\theta}}\right)^{-}\right),
\]
such that
\[
||u^{\theta}||_{C^{0}(\overline{Q_{T}})}+||m^{\theta}(T)||_{C^{0}(\mathbb{T}^{d})}+\left\Vert \frac{1}{m^{\theta}(T)}\right\Vert _{C^{0}(\mathbb{T}^{d})}\leq L_{\theta},\;||Du^{\theta}||_{C^{0}(\overline{Q_{T}})}\leq K_{\theta}.
\]
The goal is now to show that $L_{\theta}$, $K_{\theta}$ may be chosen
independently of $\theta$. First we prove that this is true for $C_{0,\theta}$
and $\tau^{\theta}.$ Conditions (\ref{eq: fmx bound}) and (\ref{eq:fx, fxx bound})
trivially hold for the same $C_{0}$ and the new $H^{\theta}$, $g^{\theta},$
$m_{0}^{\theta}$, because the functions $H$, $g,$ $m_{0}$ do not
appear in those inequalities. Since the map $H^{0}(p,x)=\frac{1}{2}|p|^{2}+f(x,1)$
satisfies $D_{p}H^{0}\equiv p$, it also satisfies (\ref{eq:strconv}),
(\ref{eq:qg2}), (\ref{eq:Hxxp, Hxpp bound}), and (\ref{eq: Hxp bound}),
with $C_{0}$ being replaced by a universal constant. Thus, since
$H^{\theta}$ is a convex combination of $H^{0}$ and $H$, these
inequalities still hold for $H^{\theta},$ when $C_{0}$ is replaced
by a convex combination of $C_{0}$ and a universal constant. By the
same reasoning, conditions (\ref{qg1}), (\ref{eq: H>=00003D|p|^2-C}),
and (\ref{eq: Hx, Hxx bound}) hold for $H^{\theta}$ after replacing
$C_{0}$ with a convex combination of $C_{0}$ and a constant depending
only on $C_{0}$ and $||f(\cdot,1)||_{C^{2}(\mathbb{T}^{d})}\leq C_{0}$.
Only condition (\ref{eq: old C1}) is left to consider, namely
\begin{align}
||\chi(\cdot,0)||_{C^{0}(\mathbb{T}^{d})}+||m_{0}^{\theta}||_{C^{1}(\mathbb{T}^{d})}+||f||_{C^{2}(\mathbb{T}^{d}\times[\min m_{0}^{\theta},\max m_{0}^{\theta}])}\leq C_{0,\theta}.\label{eq:C1kappa def}
\end{align}
The first term is already independent of $\theta,$ whereas, noticing
that $\min m_{0}\leq1\leq\max m_{0}$ and $|D_{x}m_{0}^{\theta}|=\theta|D_{x}m_{0}|$,
\begin{align*}
||m_{0}^{\theta}||_{C^{1}(\mathbb{T}^{d})}+||f||_{C^{2}(\mathbb{T}^{d}\times[\min m_{0}^{\theta},\max m_{0}^{\theta}])} & \leq||m_{0}||_{C^{1}(\mathbb{T}^{d})}+||f||_{C^{2}(\mathbb{T}^{d}\times[\min m_{0},\max m_{0}])}\leq C_{0}.
\end{align*}
Thus, one may select
\begin{equation}
C_{0,\theta}=C_{0,\theta}(C_{0}),\;\tau^{\theta}=\tau,\label{eq: C0kappa ind}
\end{equation}
and consequently
\[
L_{1,\theta}=L_{1,\theta}(C_{0,\theta},T)=L_{1,\theta}(C_{0},T):=L_{1}.
\]
Now, by definition,
\begin{gather}
g_{0}^{\theta}(m)=\theta g_{0}+(1-\theta)m,\;\;\;g_{1}^{\theta}(m)=\theta g_{1}+(1-\theta)m.\label{eq:Psi_k, psi_k}
\end{gather}
Therefore,
\begin{align}
|g_{0}^{\theta}f_{1}^{-1}(-L_{1})| & \leq\max(|g_{0}f_{1}^{-1}(-L_{1})|,f_{1}^{-1}(-L_{1}))\leq\max(|g_{0}f_{1}^{-1}(-L_{1})|,f_{0}^{-1}(L_{1})),\label{eq:kappa ind 1}
\end{align}
and similarly,
\begin{align}
|g_{1}^{\theta}f_{0}^{-1}(L_{1}) & |=|\theta g_{1}f_{0}^{-1}(L_{1})+(1-\theta)f_{0}^{-1}(L_{1})|\leq\max(|g_{1}f_{0}^{-1}(L_{1})|,f_{0}^{-1}(L_{1})).\label{eq: kappa ind 2}
\end{align}
On the other hand, the following inequalities hold:

\begin{equation}
(g_{0}^{\theta})^{-1}g_{1}^{\theta}\leq g_{0}^{-1}g_{1},\;\;\;g_{1}^{-1}g_{0}\leq(g_{1}^{\theta})^{-1}g_{0}^{\theta}.\label{eq:psiPsi}
\end{equation}
Indeed, by (\ref{eq:Psi_k, psi_k}), 
\begin{align*}
g_{0}^{\theta}g_{0}^{-1}g_{1} & =\theta g_{0}g_{0}^{-1}g_{1}+(1-\theta)g_{0}^{-1}g_{1}\geq\theta g_{1}+(1-\theta)g_{0}^{-1}g_{0}=g_{1}^{\theta},
\end{align*}
which shows the first inequality in (\ref{eq:psiPsi}), with the second
one following in the same fashion. Now, (\ref{eq:psiPsi}) yields
\begin{align}
(g_{0}^{\theta})^{-1}g_{1}^{\theta}f_{0}^{-1}(L_{1}) & \leq g_{0}^{-1}g_{1}f_{0}^{-1}(L_{1}),\;\;\;\frac{1}{(g_{1}^{\theta})^{-1}g_{0}^{\theta}f_{1}^{-1}(-L_{1})}\leq\frac{1}{g_{1}^{-1}g_{0}f_{1}^{-1}(-L_{1})}.\label{eq: kappa ind 3}
\end{align}
Thus, (\ref{eq:kappa ind 1}), (\ref{eq: kappa ind 2}), and (\ref{eq: kappa ind 3})
yield
\begin{align*}
L_{\theta} & =L_{\theta}\left(M,|g_{1}f_{0}^{-1}(L_{1})|,|g_{0}f_{1}^{-1}(L_{1})|,g_{0}^{-1}g_{1}f_{0}^{-1}(L_{1}),\frac{1}{g_{1}^{-1}g_{0}f_{1}^{-1}(-L_{1})},f_{0}^{-1}(L_{1})\right):=L,
\end{align*}
and
\begin{align*}
K_{1,\theta} & =K_{1,\theta}\left(L,\frac{1}{T},\frac{1}{1-\tau},f_{0}\left(\frac{1}{L}\right)^{-}\right):=K_{1},\;K_{\theta}=K_{\theta}(K_{1},\beta_{K_{1}}):=K.
\end{align*}
Next, we obtain the gradient Hölder estimate with the help of Lemma
\ref{lem:a priori C1,gamma}. We remark that the operator $(Q^{\theta},N^{\theta})$
is clearly elliptic and oblique, because it comes from (\ref{eq:MFG theta}).
Moreover, since $A^{\theta},b^{\theta},$ and $B^{\theta}$ and their
derivatives are, respectively, continuous functions of $(x,t,z,p,s,\theta)$
on the compact sets $\overline{Q}_{T,K}\times[0,1]$ and $\partial Q_{T,K}\times[0,1]$,
it follows that there exist constants $\mu_{L+K}>0,\;\lambda_{L+K}>0$,
independent of $\theta$, satisfying (\ref{eq:interior estimate, ellipticity and coeff bound})
in $(\mathbb{T}^{d})_{L+K}$, and (\ref{eq:boundary muK}), (\ref{eq:oblique})
in $\partial Q_{T,L+K}$, when the operators $(Q,N)$ are replaced
by $(Q^{\theta},N^{\theta})$. Lemma \ref{lem:a priori C1,gamma}
then yields constants $C>0$, $0<\gamma<1$, independent of $\theta$,
such that
\[
[Du^{\theta}]_{\gamma,\QT}\leq C.
\]
\end{proof}
With the help of this uniform estimate, the main theorem for the strictly
elliptic problem may now be proved.
\begin{proof}[Proof of Theorem \ref{thm:smoothsols}]
The uniqueness part of the statement is an immediate consequence
of the standard Lasry-Lions monotonicity method, and will be omitted.
We define the Banach spaces
\[
E=C^{3,\alpha}(\overline{Q_{T}}),\;F=C^{1,\alpha}(\overline{Q_{T}})\times C^{2,\alpha}(\partial Q_{T}),
\]
and the continuously differentiable operator $S:E\times[0,1]\rightarrow F$
by
\[
S(u,\theta)=(Q^{\theta}u,N^{\theta}u),\;(u,\theta)\in E\times[0,1].
\]
The partial Fréchet derivative of $S$ with respect to the variable
$u$ at the point $(u,\theta$) is the corresponding linearization,
for fixed $\theta$, of the differential operator $(Q^{\theta},N^{\theta})$,
namely $(L_{(u,\theta)}^{1},L_{(u,\theta)}^{2})$, where
\begin{align*}
L_{(u,\theta)}^{1}(w)= & -\tr(A^{\theta}(x,Du)D^{2}w)-D_{q}\tr(A^{\theta}(x,Du)D^{2}u)\cdot Dw+D_{q}b^{\theta}(x,Du)\cdot Dw,\\
L_{(u,\theta)}^{2}(w)= & \begin{cases}
-w_{t}+D_{p}H^{\theta}(x,D_{x}u)\cdot D_{x}w & \text{if }t=0,\\
\frac{g_{m}^{\theta}}{f_{m}}(w_{t}-D_{p}H^{\theta}\cdot D_{x}w)+w & \text{if }t=T.
\end{cases}
\end{align*}
For fixed $(u,\theta)\in E\times[0,1]$, the linear operator $L_{(u,\theta)}^{1}$
is uniformly elliptic and the linear boundary operator $L_{(u,\theta)}^{2}$
is oblique. Moreover, the homogeneous problem $(L_{(u,\theta)}^{1}w,L_{(u,\theta)}^{2}w)=(0,0)$
has the form
\[
-\tr(\tilde{A}(x,t)D^{2}w)+\tilde{b}(x,t)\cdot Dw=0\text{ in }Q_{T},\;\;\;\tilde{B}(x,t)\cdot Dw+\tilde{c}(x,t)w=0\text{ on }\partial Q_{T},
\]
where $\tilde{B}\cdot\nu>0$, $\tilde{c}\geq0$ and $\tilde{c}\not\equiv0$,
which implies that it has only the trivial solution in $C^{3,\alpha}(\overline{Q_{T}})$.
Hence, by the standard Fredholm alternative for linear oblique problems
(see \cite{GilbargTrudinger}), the operator $(L_{(u,\theta)}^{1},L_{(u,\theta)}^{2})$
is invertible in $C^{3,\alpha}(\overline{Q_{T}})$. The infinite-dimensional
implicit function theorem then implies that the set
\[
D=\{\theta\in[0,1]:\text{the equation \ensuremath{S(u,\theta)=(0,0)} has a unique solution \ensuremath{u\in C^{3,\alpha}(\overline{Q_{T}})}}\}
\]
is open in $[0,1]$. 

The next step is to show that $D$ is also closed. Let $\{\theta_{n}\}\subset D$
be a sequence such that $\theta_{n}\rightarrow\theta\in[0,1]$, and
let $\{u_{n}\}\subset E$ be the corresponding sequence of solutions
to $S(u_{n},\theta_{n})=(0,0)$. By Lemma \ref{lem:uniform in theta bounds},
there exist numbers $C>0,\;0<\gamma<1$, independent of $n$, such
that 
\[
||u_{n}||_{C^{1,\gamma}(\overline{Q_{T}})}\leq C.
\]
The Arzelà--Ascoli Theorem implies that, up to a subsequence, there
exists $u\in C^{1,\gamma}(\overline{Q_{T}})$ such that $u_{n}\rightarrow u$
in $C^{1}(\overline{Q_{T}})$. By Theorem \ref{thm:Fiorenza convergence},
it follows that $u\in C^{2,\alpha}(\overline{Q_{T}})$, $u_{n}\rightarrow u$
in $C^{2,\alpha}(\overline{Q_{T}})$, and $S(u,\theta)=0$. In particular,
the $u_{n}$ are uniformly bounded in $C^{2,\alpha}(\overline{Q_{T}})$.
Now, given $i\in\{1,\ldots,d\}$, differentiating the equation $(Q^{\theta_{n}}(u_{n}),N^{\theta_{n}}(u_{n}))=(0,0)$
yields, for $w=D_{x_{i}}u_{n}$,
\begin{align*}
L_{(u_{n},\theta_{n})}^{1}w= & \tr(A_{x_{i}}^{\theta_{n}}(x,Du_{n})D^{2}u_{n})-b_{x_{i}}^{\theta_{n}}(x,Du_{n}),\\
L_{(u_{n},\theta_{n})}^{2}w= & \begin{cases}
D_{x_{i}}(f(x,m_{0}^{\theta_{n}}(x)))-H_{x_{i}}^{\theta_{n}} & \text{if }t=0,\\
\frac{g_{m}^{\theta_{n}}}{f_{m}}(H_{x_{i}}^{\theta_{n}}-f_{x_{i}})+g_{x_{i}}^{\theta_{n}} & \text{if }t=T.
\end{cases}
\end{align*}
Therefore, by Theorem \ref{thm:oblique Schauder}, there exists $C>0$,
independent of $n$, such that
\[
||w||_{C^{2,\alpha}(\overline{Q_{T}})}\leq C,
\]
implying that $D_{x}u_{n}$ is bounded in $C^{2,\alpha}(\overline{Q_{T}}).$
In particular, $u_{n}|_{\partial Q_{T}}$ is bounded in $C^{3,\alpha}(\partial Q_{T})$,
and by the standard Schauder theory for the Dirichlet problem, $u_{n}$
is therefore bounded in $C^{3,\alpha}(\overline{Q_{T}})$. Consequently,
$u\in C^{3,\alpha}(\overline{Q_{T}})$ and $\theta\in D,$ proving
that $D$ is closed. Since $0\in D$, it follows that $D=[0,1]$,
which completes the proof.
\end{proof}
The next theorem is the corresponding variant of Theorem \ref{thm:weaksols}
for the case of a fast-growing $f$, which follows from the estimates
in Theorem \ref{thm:V(x) a priori C^1}.
\begin{thm}
\label{thm:V(x) smoothsols} If condition (\ref{growth chi}) is replaced
by (\ref{eq:separated potential}), the conclusion of Theorem \ref{thm:smoothsols}
holds.
\end{thm}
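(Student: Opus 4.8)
The plan is to run the continuation argument from the proof of Theorem \ref{thm:smoothsols} essentially unchanged, the only modification being that the uniform-in-$\theta$ estimate of Lemma \ref{lem:uniform in theta bounds} is now derived from Theorem \ref{thm:V(x) a priori C^1} rather than from Theorem \ref{thm:Full C^1 a priori bound}. Indeed, the architecture of that proof --- the Banach spaces $E=C^{3,\alpha}(\QT)$ and $F=C^{1,\alpha}(\QT)\times C^{2,\alpha}(\partial Q_{T})$, the homotopy family (\ref{eq:MFG theta}), the Fredholm alternative for the linearized oblique (Robin at $t=T$) problem yielding openness of the solvability set $D$, the closedness of $D$ via the Arzel\`a--Ascoli theorem and Theorem \ref{thm:Fiorenza convergence}, the bootstrap to $C^{3,\alpha}$ through Theorem \ref{thm:oblique Schauder} and the Dirichlet Schauder theory, the fact that $0\in D$, and the Lasry--Lions uniqueness --- never invokes the polynomial-growth bound (\ref{growth chi}). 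So the single point that requires attention is the uniform $C^{1,\gamma}$ bound.

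First I would check that the family (\ref{eq:MFG theta}) satisfies (\ref{eq:separated potential}) with constants independent of $\theta$. Since $H^{\theta}(x,p)=\theta H(x,p)+(1-\theta)(\tfrac12|p|^{2}+f(x,1))$ and the added term $\tfrac12|p|^{2}+f(x,1)$ has vanishing $D^{2}_{xp}$, one has $D^{2}_{xp}H^{\theta}=\theta\,D^{2}_{xp}H\equiv0$; moreover the coupling $f$ is not homotoped, so $D^{2}_{xm}f\equiv0$ and $\limsup_{w\to\infty}|h_{w}(x,w)|<\infty$ continue to hold verbatim (recall $\chi$ and $h$ depend on $f$ alone). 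All the remaining structural inequalities needed by Theorem \ref{thm:V(x) a priori C^1} were already shown to hold uniformly in $\theta$ in the proof of Lemma \ref{lem:uniform in theta bounds}, using $\min m_{0}\le 1\le\max m_{0}$. Hence Theorem \ref{thm:V(x) a priori C^1} applies to each (\ref{eq:MFG theta}) with $\theta$-independent constants, producing a uniform bound on $\|u^{\theta}\|_{C^{0}(\QT)}$, $\|m^{\theta}(\cdot,T)\|_{C^{0}}$, $\|1/m^{\theta}(\cdot,T)\|_{C^{0}}$, and $\|Du^{\theta}\|_{C^{0}(\QT)}$; feeding these into Lemma \ref{lem:a priori C1,gamma} --- whose ellipticity, obliqueness and coefficient hypotheses (\ref{eq:interior estimate, ellipticity and coeff bound}), (\ref{eq:boundary muK}), (\ref{eq:oblique}) hold with $\theta$-uniform constants by continuity of $A^{\theta},b^{\theta},B^{\theta}$ on the relevant compact sets, exactly as in the original proof --- yields the desired $\theta$-independent estimate $\|u^{\theta}\|_{C^{1,\gamma}(\QT)}\le C$.

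From here the proof of Theorem \ref{thm:smoothsols} transfers word for word, including the uniqueness claim. I expect no real obstacle: the genuinely new ingredient is only the observation that the continuation homotopy leaves $f$ (hence $\chi$ and $h$) untouched and merely interpolates the quadratic part of $H$, so that (\ref{eq:separated potential}) is preserved with uniform constants --- everything else is a verbatim replay. The one place where care is warranted is confirming that, just as in Lemma \ref{lem:uniform in theta bounds}, the quantities on which the constants of Theorem \ref{thm:V(x) a priori C^1} depend (in particular $\|f\|_{C^{2}(\mathbb{T}^{d}\times[\min m_{0}^{\theta},\max m_{0}^{\theta}])}$ and the $\limsup$ of $|h_{w}|$) are bounded uniformly in $\theta$, which holds because $[\min m_{0}^{\theta},\max m_{0}^{\theta}]\subset[\min m_{0},\max m_{0}]$ and $f$ is fixed along the homotopy.
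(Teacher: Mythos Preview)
Your proposal is correct and follows exactly the same approach as the paper, which proves the theorem in a single sentence by noting that all results of Section \ref{sec:Classical-solutions} carry over once Theorem \ref{thm:Full C^1 a priori bound} is replaced by Theorem \ref{thm:V(x) a priori C^1}. Your write-up simply makes explicit the verifications (that the homotopy leaves $f$ fixed and preserves $D_{xp}^{2}H^{\theta}\equiv0$, so (\ref{eq:separated potential}) holds uniformly in $\theta$) that the paper leaves implicit.
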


\begin{proof}
All of the results in this section follow in this case by simply replacing
the use of Theorem \ref{thm:Full C^1 a priori bound} by Theorem \ref{thm:V(x) a priori C^1}.
\end{proof}

\section{Weak solutions\label{sec:Weak-solutions}}

In this section we develop the theory of weak solutions, for the case
where the strict ellipticity condition (\ref{eq:SE}) fails to hold.
We begin by stating the definition of weak solution that will be used,
which is in direct analogy with the one used in \cite{Cardalaguiet,CardaliaguetGraber,CardaliaguetGraberPorrettaTonon}
to study the degenerate case in which $g_{m}\equiv0$.
\begin{defn}[Definition of weak solution]
\label{def:weaksol def}A pair $(u,m)\in\text{{BV}}(Q_{T})\times L_{+}^{\infty}(Q_{T})$
is called a weak solution to (\ref{eq:mfg}) if the following conditions
hold:
\end{defn}

\begin{enumerate}
\item[(i)] $D_{x}u\in L^{2}(Q_{T}),u\in L^{\infty}(Q_{T}),$ $m\in C^{0}([0,T];H^{-1}(\mathbb{T}^{d}))$,
$m(\cdot,T)\in L^{\infty}(\mathbb{T}^{d})$.
\item[(ii)] $u$ satisfies the HJ inequality
\[
-u_{t}+H(\cdot,D_{x}u)\leq f(\cdot,m)\;\;\text{ in }Q_{T},\;\;\;u(\cdot,T)=g(\cdot,m(\cdot,T))\;\;\text{ in }\mathbb{T}^{d},
\]
in the distributional sense, with $u(\cdot,T)=g(\cdot,m(\cdot,T))$
in the sense of traces.
\item[(iii)] $m$ satisfies the continuity equation
\begin{equation}
m_{t}-\text{div}(mD_{p}H(\cdot,D_{x}u))=0\text{ in }Q_{T},\;\;\;m(\cdot,0)=m_{0}\text{ in }\mathbb{T}^{d}\label{eq: FP weak}
\end{equation}
in the distributional sense, with $m(\cdot,0)=m_{0}$ in $H^{-1}(\mathbb{T}^{d})$.
\item[(iv)] The following identity holds:
\begin{multline}
\intQ m(x,t)(H(x,D_{x}u)-D_{p}H(x,D_{x}u)\cdot D_{x}u-f(x,m))dxdt\\
=\int_{\mathbb{T}^{d}}(m(x,T)g(x,m(x,T))-m_{0}(x)u(x,0))dx.\label{eq:identity weak}
\end{multline}
\end{enumerate}
The solutions to the degenerate elliptic problem will be obtained
as a ``vanishing viscosity'' limit of MFG systems satisfying (\ref{eq:SE}),
in the following sense. Assuming that (\ref{eq:DE}) holds, we consider,
for $\epsilon>0$, the system

\begin{equation}
\tag{\ensuremath{\text{MFG}_{\epsilon}}}\begin{cases}
-u_{t}^{\epsilon}+H(\cdot,D_{x}u^{\epsilon})=f(\cdot,m^{\epsilon})+\epsilon\log(m^{\epsilon}),\;\;\;u(\cdot,T)=g(\cdot,m^{\epsilon}(\cdot,T))\\
m_{t}^{\epsilon}-\text{\emph{\emph{div}}}(m^{\epsilon}D_{p}H(\cdot,D_{x}u^{\epsilon}))=0,\;\;\;m^{\epsilon}(0)=m_{0},
\end{cases}\label{eq:MFG epsilon}
\end{equation}
Since (\ref{eq:MFG epsilon}) is strictly elliptic, by Theorem \ref{thm:smoothsols}
it has a unique solution $(u^{\epsilon},m^{\epsilon})\in C^{3}(\overline{Q_{T}})\times C^{2}(\overline{Q_{T}})$.
The only missing ingredient necessary to obtain a solution as the
limit when $\epsilon\rightarrow0$ is the following minor modification
of Lemma \ref{lem: gradient a priori bound}, which provides a global,
a priori upper bound for the density that is independent of the size
of $\frac{1}{\min(m(T))}$.
\begin{lem}
\renewcommand*{\theHequation}{notag13.\theequation}

\label{lem: f a priori bound weak} Assume that (\ref{eq:SE}) holds.
If $(u,m)\in C^{3}(\overline{Q_{T}})\times C^{2}(\overline{Q_{T}})$
is a solution to (\ref{eq:mfg}), then
\[
\max_{\overline{Q_{T}}}f(\cdot,m(\cdot,\cdot))\leq C,
\]
where 
\[
C=C\left(C_{0},T,\frac{1}{T},\frac{1}{1-\tau},||u||_{C^{0}(\QT)},\max_{\mathbb{T}^{d}}m(T),f_{1}(\max_{\mathbb{T}^{d}}m(T))^{+},\left\Vert \frac{1}{\chi}\right\Vert _{C^{0}(\mathbb{T}^{d}\times[1,\infty))},||h_{w}||_{C^{0}(\mathbb{T}^{d}\times[1,\infty))}\right).
\]
\end{lem}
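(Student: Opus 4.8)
The plan is to rerun the Bernstein argument of Lemma~\ref{lem: gradient a priori bound} on an auxiliary function built directly from the quantity $f(\cdot,m)=-u_t+H(\cdot,D_xu)$, and to replace the only step of that proof which used a lower bound on $m(\cdot,T)$ --- the a priori lower bound on $w=f(x,m(x,t))$ --- by a trivial dichotomy. Recall from Corollary~\ref{cor:utbound} that $L_u(u_t)=0$. For a constant $0<c_1\le1$ to be fixed below, put
\[
v=-u_t+H(\cdot,D_xu)+\frac{c_1}{2}u^2=f(\cdot,m)+\frac{c_1}{2}u^2 .
\]
Since $L_u(u_t)=0$, the linearization $L_uv$ coincides with $L_u\bigl(H(\cdot,D_xu)+\tfrac{c_1}{2}u^2\bigr)$, that is, it is exactly the expression $L_uv_2+c_1L_uv_1$ of Lemma~\ref{lem: L(v1) and L(v2)} with the choice $\widetilde u=u$ ($b=c=0$); its estimate therefore involves only $C_0$, $\tau$, $\|u\|_{C^0(\QT)}$, and the values of $1/\chi$ and $|h_w|$ taken along the solution, and never a lower bound on $m$.

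Let $(x_0,t_0)\in\QT$ be a maximum point of $v$. If $t_0=0$, then $v(x_0,t_0)=f(x_0,m_0(x_0))+\frac{c_1}{2}u(x_0,0)^2\le C_0+\tfrac12\|u\|_{C^0(\QT)}^2$ by the bound $\|f\|_{C^2(\mathbb{T}^d\times[\min m_0,\max m_0])}\le C_0$. If $t_0=T$, then, since $f$ is increasing in its second variable, $v(x_0,t_0)\le f_1\bigl(\max_{\mathbb{T}^d}m(\cdot,T)\bigr)^{+}+\tfrac12\|u\|_{C^0(\QT)}^2$. Suppose now that $0<t_0<T$, and set $w_0=f(x_0,m(x_0,t_0))$. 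If $w_0<1$, then simply $v(x_0,t_0)=w_0+\frac{c_1}{2}u(x_0,t_0)^2<1+\tfrac12\|u\|_{C^0(\QT)}^2$. If $w_0\ge1$, then by $(\ref{eq:f polynomial growth})$ and $(\ref{growth chi})$ the numbers $1/\chi(x_0,w_0)$ and $|h_w(x_0,w_0)|$ are bounded, respectively, by $\|1/\chi\|_{C^0(\mathbb{T}^d\times[1,\infty))}$ and $\|h_w\|_{C^0(\mathbb{T}^d\times[1,\infty))}$, so the constant produced by Lemma~\ref{lem: L(v1) and L(v2)} at $(x_0,t_0)$ is bounded by the quantities in the statement. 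One then repeats verbatim the computation of Case~3 of the proof of Lemma~\ref{lem: gradient a priori bound}: $Dv(x_0,t_0)=0$ and $D^2v(x_0,t_0)\le0$ give $0\le L_uv$; Corollary~\ref{cor:utbound} gives $-u_t(x_0,t_0)\le C_0+f_1(\eta_1)\le C$ (recall $\eta_1=\max(\max m_0,\max m(\cdot,T))$, so $f_1(\eta_1)\le\max(C_0,f_1(\max_{\mathbb{T}^d}m(\cdot,T))^{+})$), whence $f(x_0,m(x_0,t_0))\le C(1+|D_xu|^2)$, and, using $(\ref{qg1})$ together with $w_0\ge1$, also $|-u_t+D_pH\cdot D_xu|^2\ge\frac{1}{4C_0^2}|D_xu|^4-C$; plugging all of this into the estimate of Lemma~\ref{lem: L(v1) and L(v2)} and fixing $c_1$ small, depending only on $C_0$ and on the already controlled constant of that lemma, forces $|D_xu(x_0,t_0)|\le C$, so that
\[
v(x_0,t_0)=-u_t(x_0,t_0)+H(x_0,D_xu(x_0,t_0))+\frac{c_1}{2}u(x_0,t_0)^2\le C .
\]

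In all cases $\max_{\QT}v\le C$, with $C$ depending only on $C_0,\tau,T,\|u\|_{C^0(\QT)},\max_{\mathbb{T}^d}m(T),f_1(\max_{\mathbb{T}^d}m(T))^{+},\|1/\chi\|_{C^0(\mathbb{T}^d\times[1,\infty))}$ and $\|h_w\|_{C^0(\mathbb{T}^d\times[1,\infty))}$; consequently $f(\cdot,m)=v-\frac{c_1}{2}u^2\le v\le C$ on $\QT$, which is the assertion. The main point --- and essentially the only thing to verify carefully --- is that, once restricted to the region $\{w\ge1\}$, the interior computation of Lemma~\ref{lem: gradient a priori bound} never calls on a lower bound for the density: it uses $1/\chi$ and $|h_w|$ only at values $w\ge1$, together with Corollary~\ref{cor:utbound} and $(\ref{qg1})$. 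Building $v$ from $f(\cdot,m)=-u_t+H$ instead of from $H(\cdot,D_xu)$ alone is precisely what makes both the boundary values of $v$ and its interior maximum value directly controlled by the terminal and initial densities, so that the dependence on $1/\min m(\cdot,T)$ present in Lemma~\ref{lem: gradient a priori bound} drops out.
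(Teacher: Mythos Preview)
Your proof is correct and follows essentially the same approach as the paper: both use the auxiliary function $\tilde v=-u_t+H(\cdot,D_xu)+\tfrac{c_1}{2}\tilde u^2=f(\cdot,m)+\tfrac{c_1}{2}\tilde u^2$ (you take $\tilde u=u$, the paper keeps the normalized $\tilde u$ from Lemma~\ref{lem: gradient a priori bound}, which is irrelevant here since the boundary case is handled directly by the boundary densities), observe that $L_u(u_t)=0$ so $L_u\tilde v=L_uv$, and in the interior case replace the lower bound on $w$ by the dichotomy $w_0<1$ versus $w_0\ge 1$ before invoking Lemma~\ref{lem: L(v1) and L(v2)}. The only minor technical difference is that the paper extracts the sharper dominant term $|-u_t+D_pH\cdot D_xu|^2\ge\tfrac12 f^2+\tfrac{1}{2C_0^2}|D_xu|^4-C$, which bounds $f$ and $|D_xu|$ simultaneously as a degree-four versus degree-$(3+\tau)$ inequality in $(|D_xu|,\sqrt f)$, whereas you recycle Case~3 of Lemma~\ref{lem: gradient a priori bound} verbatim to first get $|D_xu(x_0,t_0)|\le C$ and then conclude $f\le C(1+|D_xu|^2)\le C$; both routes are valid.
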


\begin{proof}
The argument is a simple variant of the proof of Lemma \ref{lem: gradient a priori bound}.
Let $v$ and $\utilde$ have the same meaning as in said proof, with
$0<c_{1}<1$ once more being a free parameter, set $\tilde{v}=-u_{t}+v=f+\frac{c_{1}}{2}\utilde^{2}$,
and let $(x_{0},t_{0})$ be a point where $\tilde{v}$ achieves its
maximum value. 

\textbf{Case 1. }If $t_{0}\in\{0,T\}$, then
\begin{align*}
\tilde{v}(x_{0},t_{0}) & =f(x_{0},m(x_{0},t_{0}))+\frac{c_{1}}{2}\utilde^{2}\leq\max(f_{1}(\max m(T)),f_{1}(\max m_{0}))+C\leq C.
\end{align*}

\textbf{Case 2. }Assume next that $0<t_{0}<T$. Without loss of generality,
it may be assumed that $f=f(x_{0},m(x_{0},t_{0}))\geq1$, because
otherwise there would be nothing to prove. Therefore, using (\ref{eq:qg2}),
since
\begin{align*}
|-u_{t}+D_{p}H\cdot D_{x}u| & \geq-u_{t}+D_{p}H\cdot D_{x}u\geq f+H,
\end{align*}
it follows, by (\ref{eq: H>=00003D|p|^2-C}), that
\begin{equation}
|-u_{t}+D_{p}H\cdot D_{x}u|^{2}\geq\frac{1}{2}|f|^{2}+\frac{1}{2C_{0}^{2}}|D_{x}u|^{4}-C.\label{eq:dominant term weak}
\end{equation}
Now, by Corollary \ref{cor:utbound}, $L_{u}(\tilde{v})=L_{u}(v)$,
so, as in (\ref{eq:L_u(v) quote weak}), Lemma \ref{lem: L(v1) and L(v2)}
yields
\begin{multline*}
0\leq L_{u}(\tilde{v})\leq\frac{-1}{2C_{0}}|-D_{x}u_{t}+D_{p}HD_{xx}^{2}u|^{2}-\frac{\chi}{2C_{0}^{2}}|D_{xx}^{2}u|^{2}-\frac{c_{1}}{2}|-u_{t}+D_{p}H\cdot D_{x}u|^{2}-\frac{c_{1}}{C_{0}}\chi|D_{x}u|^{2}\\
+C(1+|D_{x}u|^{3+\tau}+\chi(1+|D_{x}u|^{1+\tau})+\chi^{(1+\tau)/2}|D_{x}u|^{2}+|f|(1+|D_{x}u|^{1+\tau})+|D_{x}u|^{2}|f|^{(1+\tau)/2})\\
+Cc_{1}(\chi|D_{xx}^{2}u|^{2}+|-D_{x}u_{t}+D_{xx}^{2}uD_{p}H|^{2}).
\end{multline*}
Thus, by (\ref{growth chi}), (\ref{eq:dominant term weak}), and
the fact that $f\geq1$,
\begin{multline*}
0\leq\frac{-1}{2C_{0}}|-D_{x}u_{t}+D_{p}HD_{xx}^{2}u|^{2}-\frac{\chi}{2C_{0}^{2}}|D_{xx}^{2}u|^{2}-\frac{c_{1}}{4}\left(\frac{1}{C_{0}^{2}}|D_{x}u|^{4}+f{}^{2}\right)-\frac{c_{1}}{C_{0}}\chi|D_{x}u|^{2}\\
+C(1+|D_{x}u|^{3+\tau}+(1+f)(1+|D_{x}u|^{1+\tau})+(1+f)^{(1+\tau)/2}|D_{x}u|^{2}+f(1+|D_{x}u|^{1+\tau})\\
+|D_{x}u|^{2}f{}^{(1+\tau)/2})+Cc_{1}(\chi|D_{xx}^{2}u|^{2}+|-D_{x}u_{t}+D_{xx}^{2}uD_{p}H|^{2}).
\end{multline*}
Once more, as in Lemma \ref{lem: gradient a priori bound}, fix $c_{1}$
such that $c_{1}<\frac{1}{4C_{0}C(1+C_{0})}$, where the constant
$C$ is as in the previous line, obtaining
\begin{align*}
\frac{c_{1}}{4}\left(\frac{1}{C_{0}}|D_{x}u|^{4}+f{}^{2}\right)\leq & C(1+|D_{x}u|^{3+\tau}+(1+f)(1+|D_{x}u|^{1+\tau})+(1+f)^{(1+\tau)/2}|D_{x}u|^{2}\\
 & +f(1+|D_{x}u|^{1+\tau})+|D_{x}u|^{2}f{}^{(1+\tau)/2}).
\end{align*}
The left- and right-hand sides have, respectively, degree $4$ and
degree $3+\tau<4$ in the non-negative variables $(|D_{x}u|,\sqrt{f})$,
thus $|D_{x}u|^{2}+f\leq C,$ and, in particular, it follows that
\[
\tilde{v}(x_{0},t_{0})\leq C.
\]
\qedhere
\end{proof}
We now obtain several a priori bounds for $(u^{\epsilon},m^{\epsilon})$
that are independent of $\epsilon$.
\begin{lem}
\label{lem: weak Linfty estimates}Assume $(\ref{eq:DE})$, and let
$(u^{\epsilon},m^{\epsilon})\in C^{3,\alpha}(\overline{Q_{T}})\times C^{2,\alpha}(\overline{Q_{T}})$
be the solution to (\ref{eq:MFG epsilon}). Then there exist constants
$L,L_{1},C,$ and $C_{1}$, with

\[
L=L(L_{1},|g_{1}f_{0}^{-1}(L_{1})|,g_{0}^{-1}g_{1}f_{0}^{-1}(L_{1})),\;L_{1}=(C_{0},T,\text{\ensuremath{|\min_{\mathbb{T}^{d}}f(\cdot,0)|}},|\min_{\mathbb{T}^{d}}g(\cdot,0)|),
\]
\[
C=C(C_{1},f_{1}(C_{1})^{+}),\;C_{1}=C_{1}\left(L,\frac{1}{T},\frac{1}{1-\tau},\left\Vert \frac{1}{\chi}\right\Vert _{C^{0}(\mathbb{T}^{d}\times[1+\max_{\mathbb{T}^{d}}f^{+}(\cdot,0),\infty))},||h_{w}||_{C^{0}(\mathbb{T}^{d}\times[1+\max_{\mathbb{T}^{d}}f^{+}(\cdot,0),\infty))}\right),
\]
such that, for every $\epsilon\leq\frac{1}{C}$,
\begin{gather}
||u^{\epsilon}||_{C^{0}(\QT)}\leq L,\;\;\;||m^{\epsilon}||_{C^{0}(\QT)}+||u_{t}^{-,\epsilon}||_{C^{0}(\QT)}\leq C,\label{eq:u epsilon bdd}\\
||u_{t}^{\epsilon}||_{L^{1}(Q_{T})}+||D_{x}u^{\epsilon}||_{L^{2}(Q_{T})}+\epsilon||\log m^{\epsilon}||_{L^{1}(Q_{T})}\leq C.\label{eq:energy bounds}
\end{gather}
\end{lem}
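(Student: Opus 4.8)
The plan is to obtain the $C^0$ bound on $u^\epsilon$ by applying Lemma \ref{lem:aprioriu} to the perturbed system (\ref{eq:MFG epsilon}), whose coupling is $f^\epsilon(x,m)=f(x,m)+\epsilon\log m$; then to bound $m^\epsilon$ from above via Lemma \ref{lem: f a priori bound weak}; then to use the maximum principle for $L_{u^\epsilon}(u_t^\epsilon)=0$ from Corollary \ref{cor:utbound} to get the lower bound on $u_t^\epsilon$; and finally to extract the integral bounds (\ref{eq:energy bounds}) from a Lasry--Lions type energy identity. The key point throughout is that all constants must be taken uniformly in $\epsilon\in(0,1/C]$, which forces us to track how the perturbation interacts with the hypotheses used in the quoted lemmas.

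First I would verify that $f^\epsilon$ satisfies (\ref{eq:SE}), (\ref{eq:f polynomial growth}), (\ref{growth chi}), (\ref{eq: fmx bound}), (\ref{eq:fx, fxx bound}) with constants independent of $\epsilon\le 1$. Indeed $f^\epsilon(\cdot,0)\equiv-\infty$ since $\log 0=-\infty$, so (\ref{eq:SE}) holds; moreover $m(f^\epsilon)_m=mf_m+\epsilon$, so $m(f^\epsilon)_m\ge mf_m$, and the new $\chi^\epsilon=mf_m+\epsilon=\chi+\epsilon$, whence $\chi^\epsilon_w=\chi_w$ and $h^\epsilon_w=\chi_w/(2\sqrt{\chi+\epsilon})$, so $1/\chi^\epsilon\le1/\chi$ and $|h^\epsilon_w|\le|h_w|$ on $\{w\ge w_0\}$ once $\chi(w_0)$ is bounded below — this is where the quantities $\|1/\chi\|_{C^0(\mathbb T^d\times[\max f^+(\cdot,0)+1,\infty))}$ and $\|h_w\|_{C^0(\dots)}$ enter the constants. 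The mixed-derivative bounds (\ref{eq: fmx bound})--(\ref{eq:fx, fxx bound}) are unaffected since $\epsilon\log m$ has no $x$-dependence. Applying Lemma \ref{lem:aprioriu} to (\ref{eq:MFG epsilon}) then gives
\[
g_0 (f^\epsilon_1)^{-1}(-C)-C(e^{CT}-e^{Ct})\le u^\epsilon(x,t)\le g_1(f^\epsilon_0)^{-1}(C)+C(e^{CT}-e^{Ct}),
\]
with $C=C(C_0,\tau,T)$; since $(f^\epsilon)^{-1}(w)\le f^{-1}(w)$ for $w$ in the relevant range (as $f^\epsilon\ge f$ there) and since $f(\cdot,0)>-\infty$ by (\ref{eq:DE}), together with the stated bound $g(\cdot,0)>-\infty$, the argument of $g_0,g_1$ stays in a fixed compact interval, yielding $\|u^\epsilon\|_{C^0(\QT)}\le L$ uniformly. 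Next, Corollary \ref{cor:apriorim} gives a uniform two-sided bound on $m^\epsilon(\cdot,T)$, and in particular $\max_{\mathbb T^d}m^\epsilon(\cdot,T)\le g_0^{-1}g_1(f^\epsilon_0)^{-1}(C)\le g_0^{-1}g_1 f_0^{-1}(C)$. Feeding this into Lemma \ref{lem: f a priori bound weak} (applied to the $\epsilon$-system, whose constants depend on the quantities listed, all uniform in $\epsilon$) yields $\max_{\QT}(f(\cdot,m^\epsilon)+\epsilon\log m^\epsilon)\le C$, hence $f(\cdot,m^\epsilon)\le C$ on $\{m^\epsilon\ge1\}$ and trivially on $\{m^\epsilon<1\}$, so $\|m^\epsilon\|_{C^0(\QT)}\le C$ by the coercivity of $f$ in $m$. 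The bound $\|u_t^{-,\epsilon}\|_{C^0}\le C$ then follows from Corollary \ref{cor:utbound}: $u_t^\epsilon\ge -C_0-f_1^\epsilon(\eta_1)\ge-C_0-f_1(\max m^\epsilon)-\epsilon|\log\eta_0|$; the term $\epsilon|\log\eta_0|$ requires a lower bound $\eta_0$ on $m^\epsilon$, but $-\epsilon\log m$ is bounded above on $(0,1]$ by $0$, so actually $f^\epsilon_1(\eta_1)\le f_1(\eta_1)$ when $\eta_1\ge1$, giving $u_t^\epsilon\ge-C_0-f_1(\max m^\epsilon)\ge -C$ directly, with no need for a lower bound on $m^\epsilon$.

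For the energy bounds (\ref{eq:energy bounds}), the plan is the standard Lasry--Lions computation: multiply the HJ equation in (\ref{eq:MFG epsilon}) by $m^\epsilon$, the FP equation by $u^\epsilon$, integrate over $Q_T$, integrate by parts in both space and time, and use $m^\epsilon(\cdot,0)=m_0$, $u^\epsilon(\cdot,T)=g(\cdot,m^\epsilon(\cdot,T))$. Using the convexity inequality $D_pH\cdot p-H(x,p)\ge \frac1{2C_0}|p|^2-C_0$ (a consequence of (\ref{eq:strconv}) and (\ref{eq: H>=00003D|p|^2-C})) together with the sign of $m^\epsilon$, one obtains
\[
\frac1{2C_0}\intQ m^\epsilon|D_xu^\epsilon|^2 + \epsilon\intQ m^\epsilon|\log m^\epsilon|
\;\le\; \intO\!\big(m^\epsilon(T)g(\cdot,m^\epsilon(T))-m_0 u^\epsilon(\cdot,0)\big)+C\intQ m^\epsilon f(\cdot,m^\epsilon)+C,
\]
and every term on the right is controlled by the already-established $L^\infty$ bounds on $u^\epsilon$, $m^\epsilon$, $m^\epsilon(\cdot,T)$, together with $\int m^\epsilon(\cdot,t)=1$ for all $t$ (mass conservation from the FP equation). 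Since $m_0>0$ is bounded below, $\intQ m^\epsilon|D_xu^\epsilon|^2\ge c\,\|D_xu^\epsilon\|_{L^2}^2$ would be false in general — instead one uses that the full gradient bound is not needed here; rather, to get $\|D_xu^\epsilon\|_{L^2(Q_T)}\le C$ one combines the energy inequality with the pointwise bound $|D_xu^\epsilon|^2\le C(1+|u_t^\epsilon|+|f(\cdot,m^\epsilon)|)$ from the HJ equation and (\ref{eq: H>=00003D|p|^2-C}), so that $\|D_xu^\epsilon\|_{L^2}^2\le C(1+\|u_t^\epsilon\|_{L^1})$; and $\|u_t^\epsilon\|_{L^1}$ is bounded by writing $|u_t^\epsilon|=u_t^\epsilon+2u_t^{-,\epsilon}$, using $\|u_t^{-,\epsilon}\|_{C^0}\le C$ from above and $\intQ u_t^\epsilon=\intO(u^\epsilon(\cdot,T)-u^\epsilon(\cdot,0))$, which is controlled by $\|u^\epsilon\|_{C^0}\le L$. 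Finally $\epsilon\|\log m^\epsilon\|_{L^1(Q_T)}\le \epsilon\intQ m^\epsilon|\log m^\epsilon| + \epsilon\,C$ (splitting at $m^\epsilon=1$ and using $\sup_{0<s\le1}s^{-1}\cdot s|\log s|<\infty$ together with $\|m^\epsilon\|_{L^\infty}\le C$), so it is bounded by the energy inequality. The main obstacle is organizing the uniformity in $\epsilon$ through the chain of lemmas — in particular ensuring that the lower-bound-on-$m$ hypotheses used implicitly in Lemma \ref{lem: gradient a priori bound} (via $f_0(\min m(T))^-$) are replaced, in the weak setting, by the upper-bound-only hypotheses of Lemma \ref{lem: f a priori bound weak}, and checking that $\epsilon\log m$ never degrades a constant because it only helps (makes $\chi$ larger, makes $f_1$ smaller on $\{m\le1\}$, and is the only source of strict ellipticity of (\ref{eq:MFG epsilon})).
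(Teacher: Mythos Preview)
Your overall strategy---apply Lemma \ref{lem:aprioriu} and Corollary \ref{cor:apriorim} to the $\epsilon$-system, then Lemma \ref{lem: f a priori bound weak} for the density bound, then extract the integral estimates---matches the paper. Two points need correction, one minor and one a genuine gap.

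\textbf{The bound on $u_t^{-,\epsilon}$.} Your route through Corollary \ref{cor:utbound} works but the write-up contains a sign error (for $\eta_1\ge 1$ one has $f_1^\epsilon(\eta_1)=f_1(\eta_1)+\epsilon\log\eta_1\ge f_1(\eta_1)$, not $\le$), and the appearance of $\eta_0$ is spurious since only $\eta_1$ enters the lower bound in (\ref{eq:u_tbound}). The paper avoids this detour entirely: once $f^\epsilon(\cdot,m^\epsilon)\le C$ is known from Lemma \ref{lem: f a priori bound weak}, the HJ equation gives $-u_t^\epsilon = f^\epsilon - H \le C + C_0$ pointwise, which is the desired bound in one line.

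\textbf{The bound on $\epsilon\|\log m^\epsilon\|_{L^1}$: this is a real gap.} Your Lasry--Lions identity only controls $\epsilon\intQ m^\epsilon|\log m^\epsilon|$, and your claimed passage to $\epsilon\|\log m^\epsilon\|_{L^1}$ via ``$\sup_{0<s\le 1} s^{-1}\cdot s|\log s|<\infty$'' is false---that supremum is $\sup_{0<s\le 1}|\log s|=+\infty$. Near $\{m^\epsilon\approx 0\}$ the quantity $|\log m^\epsilon|$ is unbounded while $m^\epsilon|\log m^\epsilon|$ is small, so no pointwise comparison can work. The fix, and what the paper does, is to drop Lasry--Lions here and simply integrate the HJ equation itself over $Q_T$ (no weight $m^\epsilon$):
\[
\intQ H(\cdot,D_xu^\epsilon) + \epsilon\intQ(\log m^\epsilon)^{-} \;=\; \intQ f(\cdot,m^\epsilon) + \epsilon\intQ(\log m^\epsilon)^{+} + \intO\big(u^\epsilon(\cdot,T)-u^\epsilon(\cdot,0)\big).
\]
The right-hand side is bounded because $m^\epsilon$ is bounded (so $f(\cdot,m^\epsilon)$ and $(\log m^\epsilon)^+$ are bounded) and $\|u^\epsilon\|_{C^0}\le L$. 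Since both terms on the left are non-negative, this simultaneously yields $\|D_xu^\epsilon\|_{L^2}\le C$ and $\epsilon\|(\log m^\epsilon)^-\|_{L^1}\le C$, and the latter combined with the trivial bound on the positive part gives $\epsilon\|\log m^\epsilon\|_{L^1}\le C$. This also makes your separate pointwise argument for $\|D_xu^\epsilon\|_{L^2}$ redundant.

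A smaller inaccuracy: $\chi^\epsilon(x,w)\ne \chi(x,w)+\epsilon$ as functions of $w$, since $(f^\epsilon)^{-1}\ne f^{-1}$. What is true is that, as functions of $m$, $m f_m^\epsilon = m f_m + \epsilon$; after the change of variables one gets $|\chi^\epsilon_w|\le|\chi_w|$ rather than equality, which is all you need.
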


\begin{proof}
By replacing $f,\;H$ with $f-C,\;H-C$, for $C$ depending only on
$C_{0}$, there is no loss of generality in assuming $f(\cdot,0)\leq-1$.
It is readily seen that conditions (\ref{growth chi}), (\ref{eq: fmx bound}),
(\ref{eq:fx, fxx bound}), and (\ref{eq: old C1}) hold for $f^{\epsilon}=f+\epsilon\log(\cdot)$,
uniformly in $\epsilon,$ up to increasing $C_{0}$ by a finite value.
By Lemma \ref{lem:aprioriu}, there exists $L_{1}$ such that
\[
g_{0}(f_{1}^{\epsilon})^{-1}(-L_{1})-L_{1}(e^{L_{1}T}-e^{L_{1}t})\leq u^{\epsilon}\leq g_{1}(f_{0}^{\epsilon})^{-1}(L_{1})+L_{1}(e^{L_{1}T}-e^{L_{1}t}).
\]
Now, (\ref{eq:DE}) implies that 
\[
-L-L_{1}(e^{L_{1}T}-e^{L_{1}t})\leq u^{\epsilon}.
\]
On the other hand, if $L_{1}>f_{0}(1)$, then $f_{0}{}^{-1}(L_{1})>1$,
hence
\[
f_{0}^{\epsilon}(f_{0}{}^{-1}(L_{1}))=L_{1}+\epsilon\log(f_{0}^{-1}(L_{1}))\geq L_{1},
\]
which implies
\begin{equation}
g_{1}f_{0}^{-1}(L_{1})\geq g_{1}(f_{0}^{\epsilon})^{-1}(L_{1}).\label{eq:g1 fep_0 ^-1  weak}
\end{equation}
Consequently,
\[
u^{\epsilon}\leq g_{1}f_{0}{}^{-1}(L_{1})+L_{1}(e^{L_{1}T}-e^{L_{1}t}),
\]
which proves the first inequality in (\ref{eq:u epsilon bdd}). Now,
by Corollary \ref{cor:apriorim} and (\ref{eq:g1 fep_0 ^-1  weak}),
\[
m^{\epsilon}(\cdot,T)\leq L.
\]
Thus, Lemma \ref{lem: f a priori bound weak} implies that
\begin{equation}
f(\cdot,m^{\epsilon})+\epsilon\log m^{\epsilon}(\cdot,\cdot)=f^{\epsilon}(\cdot,m^{\epsilon})\leq C,\label{eq:-1}
\end{equation}
and (\ref{eq:f polynomial growth}) yields, for $C$ depending only
on $||\frac{1}{\chi}||_{C^{0}(\mathbb{T}^{d}\times[1,\infty))}$ and
$|\min f(\cdot,0)|$,
\[
\frac{1}{C}\log(m^{\epsilon}(\cdot,\cdot))-C\leq f(\cdot,m^{\epsilon}(\cdot,\cdot)).
\]
Therefore, by (\ref{eq:-1}) we conclude that $||m^{\epsilon}||_{C^{0}(\QT)}\leq C$
for large enough $C$ and small enough $\epsilon$. The lower bound
for $u_{t}^{\epsilon}$ is simply a consequence of (\ref{eq:-1}),
the relation $-u_{t}^{\epsilon}+H=f^{\epsilon},$ and the fact that
$H$ is bounded below. This completes the proof of (\ref{eq:u epsilon bdd}). 

Now, integrating the HJ equation for $u^{\epsilon}$ yields
\begin{equation}
\intQ H(\cdot,D_{x}u^{\epsilon})+\intQ\epsilon(\log m^{\epsilon})^{-}=\intQ(f(\cdot,m^{\epsilon})+\epsilon(\log m^{\epsilon})^{+})+\intO(u^{\epsilon}(T)-u^{\epsilon}(0)).\label{eq:weak inter}
\end{equation}
It then follows from (\ref{eq:u epsilon bdd}) and (\ref{eq: H>=00003D|p|^2-C})
that $||D_{x}u^{\epsilon}||_{L^{2}(Q_{T})}$ and $\epsilon||\log m^{\epsilon}||_{L^{1}(Q_{T})}$
are bounded. Finally, \[\left|\intQ u_{t}^{\epsilon}\right|=\left|\int_{\mathbb{T}^d}u^{\epsilon}(T)-u^{\epsilon}(0)\right|\leq C,\]
and since $u_{t}^{\epsilon}$ is bounded below, this proves (\ref{eq:energy bounds}).
\end{proof}
After extracting a subsequence, Lemma \ref{lem: weak Linfty estimates}
implies the existence of $(u,m)\in\text{{BV}}(Q_{T})\times L_{+}^{\infty}(Q_{T})$
such that, as $\epsilon\rightarrow0$,
\[
u^{\epsilon}\rightarrow u\text{ in }L^{1}(Q_{T})\text{ and pointwise a.e.,\text{ \;\;}\ensuremath{m^{\epsilon}\weak m}\text{ in }\ensuremath{L^{\infty}}(\ensuremath{Q_{T}}), }
\]
\vspace*{-0.75cm}

\begin{equation}
u_{t}^{\epsilon}\weak u_{t}\text{ in }C^{0}(\overline{Q_{T}})^{*},\;\;\;D_{x}u^{\epsilon}\rightharpoonup D_{x}u\text{ in }L^{2}(Q_{T}).\label{eq:u conv in BV}
\end{equation}
We now show that, up to a further subsequence, this convergence can
be strengthened.
\begin{lem}
\label{lem:convergence lemma weak}Assume that (\ref{eq:DE}) holds,
let $(u^{\epsilon},m^{\epsilon})\in C^{3}(\overline{Q_{T}})\times C^{2}(\overline{Q_{T}})$
be the solution to (\ref{eq:MFG epsilon}), and let $(u,m)\in BV(Q_{T})\times L_{+}^{\infty}(Q_{T})$
be a subsequential limit. Then, up to extracting a subsequence, 
\begin{equation}
m^{\epsilon}\rightarrow m\text{ in }C^{0}([0,T],H^{-1}(\mathbb{T}^{d}))\text{ and a.e. in }Q_{T},\label{eq:m conv}
\end{equation}
\begin{equation}
m^{\epsilon}(\cdot,T)\rightarrow m(\cdot,T),\;\;\;u^{\epsilon}(\cdot,T)\rightarrow u(\cdot,T)\text{ a.e. in \ensuremath{\mathbb{T}^{d}}, }\label{eq:term cond conv}
\end{equation}
\begin{align}
D_{x}u^{\epsilon} & \rightarrow D_{x}u\text{ in }L_{m}^{2}(Q_{T})\text{ and a.e. in }\{m>0\},\;\;m^{\epsilon}|D_{x}u^{\epsilon}|^{2}\rightarrow m|D_{x}u|^{2}\text{ in }L^{1}(Q_{T})\text{ and a.e. in }Q_{T},\label{eq: Du conv  L^2m}
\end{align}
\begin{equation}
\epsilon\log m^{\epsilon}\rightarrow0\text{ in }L_{m}^{1}(Q_{T}).\label{eq:epsilon log m conv}
\end{equation}
Moreover, $u(\cdot,T)=g(\cdot,m(\cdot,T))$ and $m\in C^{0,\frac{1}{2}}([0,T],H^{-1}(\mathbb{T}^{d}))$,
with

\[
[m]_{\frac{1}{2},[0,T],H^{-1}}\leq||mD_{p}H(\cdot,D_{x}u)||_{L^{2}(Q_{T})},
\]
and, for almost every $s\in[0,T]$, including $s=0$, 
\begin{align}
\int_{s}^{T}\intO m(x,t)(H(x,D_{x}u)-D_{p}H(x,D_{x}u)\cdot D_{x}u-f(x,m))dxdt\nonumber \\
=\int_{\mathbb{T}^{d}}(m(x,T)g(x,m(x,T))-m(x,s)u(x,s))dx.\label{eq:weak identity for all s}
\end{align}
\end{lem}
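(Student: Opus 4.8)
The plan is to run the classical vanishing-viscosity scheme: $H^{-1}$--compactness for the Fokker--Planck component, the Lasry--Lions monotonicity procedure (applied between two viscous systems) to upgrade weak to a.e.\ convergence of $m^\epsilon$, and a convex-duality argument against the limiting Hamilton--Jacobi \emph{inequality} to identify the limit and pass to the limit in the energy. \emph{Step 1 ($H^{-1}$--compactness of $m^\epsilon$).} By Lemma \ref{lem: weak Linfty estimates} the $m^\epsilon$ are bounded in $L^\infty(Q_T)$ and the $D_xu^\epsilon$ in $L^2(Q_T)$, so, using $|D_pH(x,p)|\le C_0(1+|p|)$, the flux $m^\epsilon D_pH(\cdot,D_xu^\epsilon)$ is bounded in $L^2(Q_T)$. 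The Fokker--Planck equation then shows $m^\epsilon$ bounded in $H^1((0,T);H^{-1}(\mathbb T^d))\hookrightarrow C^{0,1/2}([0,T];H^{-1}(\mathbb T^d))$, and since $L^\infty(\mathbb T^d)\hookrightarrow H^{-1}(\mathbb T^d)$ compactly, Arzel\`a--Ascoli gives, along a subsequence, $m^\epsilon\to m$ in $C^0([0,T];H^{-1}(\mathbb T^d))$, in particular $m^\epsilon(\cdot,T)\to m(\cdot,T)$ in $H^{-1}(\mathbb T^d)$. The bound $\|m^\epsilon(\cdot,t)-m^\epsilon(\cdot,s)\|_{H^{-1}}\le|t-s|^{1/2}\|m^\epsilon D_pH(\cdot,D_xu^\epsilon)\|_{L^2(Q_T)}$ will yield the claimed H\"older estimate once the flux is shown to converge strongly in $L^2$, which is deferred to Step 3.

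\emph{Step 2 (Lasry--Lions monotonicity and a.e.\ convergence).} Given $\epsilon,\epsilon'>0$, I take the difference of the two HJ equations tested against $m^\epsilon-m^{\epsilon'}$ and the difference of the two Fokker--Planck equations tested against $u^\epsilon-u^{\epsilon'}$, add, and integrate by parts (legitimate since the solutions are classical and share the initial datum $m_0$); using the uniform convexity (\ref{eq:strconv}) this produces
\[
\intO(g(\cdot,m^\epsilon(T))-g(\cdot,m^{\epsilon'}(T)))(m^\epsilon(T)-m^{\epsilon'}(T))+\intQ(f(\cdot,m^\epsilon)-f(\cdot,m^{\epsilon'}))(m^\epsilon-m^{\epsilon'})+\tfrac1{2C_0}\intQ(m^\epsilon+m^{\epsilon'})|D_x(u^\epsilon-u^{\epsilon'})|^2\le-\intQ(\epsilon\log m^\epsilon-\epsilon'\log m^{\epsilon'})(m^\epsilon-m^{\epsilon'}).
\]
The three left-hand terms are nonnegative by the strict monotonicity of $f$ and $g$ and the convexity of $H$; for the right-hand side only an \emph{upper} bound is needed. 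Expanding, the diagonal contributions $-\epsilon\intQ m^\epsilon\log m^\epsilon$ and $-\epsilon'\intQ m^{\epsilon'}\log m^{\epsilon'}$ are $O(\epsilon)+O(\epsilon')$ since $s\mapsto s(\log s)^-$ is bounded on $[0,\infty)$, while the cross contributions satisfy $\epsilon\intQ m^{\epsilon'}\log m^\epsilon+\epsilon'\intQ m^\epsilon\log m^{\epsilon'}\le\epsilon\intQ m^{\epsilon'}(\log m^\epsilon)^++\epsilon'\intQ m^\epsilon(\log m^{\epsilon'})^+\le C(\epsilon+\epsilon')$, using $(\log m^\epsilon)^+\le\log\|m^\epsilon\|_\infty$; crucially, the only $L^1$--bounded (not $L^\infty$--bounded) pieces, the negative parts $(\log m^\epsilon)^-$, drop out with the favourable sign. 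Hence the right-hand side $\to0$, so each of the three left-hand terms does. In particular $\intQ(f(\cdot,m^\epsilon)-f(\cdot,m^{\epsilon'}))(m^\epsilon-m^{\epsilon'})\to0$ with $m^\epsilon$ uniformly bounded; since $(a,b)\mapsto(f(x,a)-f(x,b))(a-b)$ is continuous, positive, and vanishes exactly on the diagonal, a compactness argument shows $(m^\epsilon)$ is Cauchy in measure on $Q_T$, so along a further subsequence $m^\epsilon\to m$ a.e.\ in $Q_T$ (the limit being the weak-$*$ limit from Step 1). The same argument with $g$ gives $m^\epsilon(\cdot,T)\to m(\cdot,T)$ a.e.\ in $\mathbb T^d$, whence $u^\epsilon(\cdot,T)=g(\cdot,m^\epsilon(\cdot,T))\to g(\cdot,m(\cdot,T))$ a.e.; combined with the $BV$--bound on $u^\epsilon$ this identifies the trace $u(\cdot,T)=g(\cdot,m(\cdot,T))$.

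\emph{Step 3 (identification, gradient convergence, remaining assertions).} Passing to the limit in the HJ equations — using the convexity of $H$, so that only the one-sided estimate $\intQ H(\cdot,D_xu)\varphi\le\liminf\intQ H(\cdot,D_xu^\epsilon)\varphi$ for $\varphi\ge0$ is needed, and that $\epsilon\log m^\epsilon$ contributes through its vanishing positive part only — gives $-u_t+H(\cdot,D_xu)\le f(\cdot,m)$ in $\mathcal D'(Q_T)$. On the other hand, testing the HJ equation for $u^\epsilon$ against $m^\epsilon$ and the Fokker--Planck equation against $u^\epsilon$ yields $\intQ m^\epsilon(D_pH(\cdot,D_xu^\epsilon)\cdot D_xu^\epsilon-H(\cdot,D_xu^\epsilon))=\intO m_0u^\epsilon(\cdot,0)-\intO m^\epsilon(\cdot,T)g(\cdot,m^\epsilon(\cdot,T))-\intQ m^\epsilon(f(\cdot,m^\epsilon)+\epsilon\log m^\epsilon)$, whose right-hand side converges: here one needs $u^\epsilon(\cdot,0)\to u(\cdot,0)$ in $L^1(\mathbb T^d)$, obtained from $u^\epsilon\to u$ in $L^1(Q_T)$ and $\|u_t^\epsilon\|_{L^1(Q_T)}\le C$, together with the a.e.\ convergence of $m^\epsilon$ and $m^\epsilon(\cdot,T)$ and $\epsilon\intQ m^\epsilon\log m^\epsilon\to0$. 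Writing the kinetic term as $\intQ m^\epsilon H^*(\cdot,-D_pH(\cdot,D_xu^\epsilon))$ with $H^*$ the Legendre transform of $H$, the jointly convex, weakly lower semicontinuous Benamou--Brenier functional $(m,w)\mapsto\intQ m\,H^*(\cdot,w/m)$ provides a lower bound for the limit, while pairing the limiting HJ inequality with the nonnegative density $m$ and using the continuity equation $m_t+\mathrm{div}\,w=0$, $m(\cdot,0)=m_0$ satisfied by the weak $L^2$--limit $w$ of $m^\epsilon(-D_pH(\cdot,D_xu^\epsilon))$ together with $u(\cdot,T)=g(\cdot,m(\cdot,T))$ gives the matching upper bound; equality in Young's inequality then forces $w=-m\,D_pH(\cdot,D_xu)$ a.e., and, by the strict convexity of $H$, forces $\intQ m^\epsilon|D_xu^\epsilon|^2\to\intQ m|D_xu|^2$, hence the strong convergences $D_xu^\epsilon\to D_xu$ in $L^2_m(Q_T)$ and a.e.\ on $\{m>0\}$, and $m^\epsilon|D_xu^\epsilon|^2\to m|D_xu|^2$ in $L^1(Q_T)$ and a.e. From these, $m^\epsilon D_pH(\cdot,D_xu^\epsilon)\to mD_pH(\cdot,D_xu)$ in $L^2(Q_T)$ (a.e.\ on $\{m>0\}$; on $\{m=0\}$ because $|m^\epsilon D_pH(\cdot,D_xu^\epsilon)|^2\le C((m^\epsilon)^2+m^\epsilon\cdot m^\epsilon|D_xu^\epsilon|^2)\to0$ a.e.\ with uniform integrability), completing the H\"older estimate of Step 1; $\epsilon\log m^\epsilon\to0$ in $L^1_m(Q_T)$ follows from $\epsilon\intQ m^\epsilon|\log m^\epsilon|\to0$ and $m^\epsilon\to m$ a.e.\ with dominated convergence on $\{m>0\}$; and re-running the testing argument on $[s,T]$ for a.e.\ $s$ (and $s=0$, via the trace convergence above) gives (\ref{eq:weak identity for all s}).

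\emph{Main obstacle.} The crux is twofold. First, making the $\epsilon\log m^\epsilon$, $\epsilon'\log m^{\epsilon'}$ perturbations in the monotonicity inequality harmless — this succeeds only because an \emph{upper} bound suffices there, so the merely $L^1$--bounded negative logarithmic parts enter with the right sign rather than needing to be estimated. Second, obtaining the \emph{upper} bound for the limiting kinetic energy: this cannot be read off an energy identity for the limit (that identity is part of the conclusion), so it must instead be extracted from the limiting HJ \emph{inequality} by convex duality, a step which also requires the somewhat delicate trace convergence $u^\epsilon(\cdot,0)\to u(\cdot,0)$.
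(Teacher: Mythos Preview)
Your Steps 1 and 2 are essentially the paper's proof, and they already contain everything you need. In particular, the third nonnegative term in your Step 2 inequality,
\[
\frac{1}{2C_0}\intQ(m^\epsilon+m^{\epsilon'})|D_x u^\epsilon-D_x u^{\epsilon'}|^2\to 0,
\]
is precisely what yields the strong gradient convergence $D_xu^\epsilon\to D_xu$ in $L^2_m(Q_T)$ and a.e.\ on $\{m>0\}$: along a subsequence where $m^{\epsilon'}\to m$ a.e., Fatou gives $\int m|D_xu^\epsilon-D_xu|^2\le\liminf_{\epsilon'}\int m^{\epsilon'}|D_xu^\epsilon-D_xu^{\epsilon'}|^2\to 0$. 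The paper does exactly this, and it makes your entire Benamou--Brenier detour in Step 3 unnecessary.

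That detour also has a genuine gap. To obtain the ``matching upper bound'' you propose to test the limiting HJ inequality against $m$ and to integrate the continuity equation by parts against $u$. Neither operation is justified at the regularity level of the limit: $u_t$ is only a signed measure while $m$ is merely $L^\infty$ (so the pairing $\int m\,u_t$ is undefined), and $u\in BV$ is not an admissible test function for the weak continuity equation. This is exactly the difficulty the weak theory is built to avoid, and the paper sidesteps it by never leaving the smooth $\epsilon$-level: the energy identity (\ref{eq:weak identity for all s}) is obtained by writing the single-system Lasry--Lions computation for $(u^\epsilon,m^\epsilon)$ on $[s,T]$ and passing to the limit using the strong convergences already established. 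For $s=0$ one does not need $u^\epsilon(\cdot,0)\to u(\cdot,0)$ in $L^1$; distributional convergence of the trace (from the BV bound) suffices because the fixed weight $m_0$ is smooth.

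Two smaller points. The claim $\epsilon\log m^\epsilon\to 0$ in $L^1_m$ does not follow from ``$\epsilon\int m^\epsilon|\log m^\epsilon|\to 0$ plus dominated convergence'': there is no obvious dominating function for $m|\log m^\epsilon|$. The paper instead reads this directly off the Lasry--Lions identity: the term $K^-_{\epsilon',\epsilon}=\epsilon\int m^{\epsilon'}(\log m^\epsilon)^-$ sits on the nonnegative side, so sending $\epsilon'\to 0$ first (Fatou, since $m^{\epsilon'}\to m$ a.e.) gives $\epsilon\int m(\log m^\epsilon)^-\le C\epsilon\to 0$. Finally, the limiting HJ inequality you derive at the start of Step 3 is not part of this lemma; it belongs to the proof of Theorem \ref{thm:weaksols}.
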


\begin{proof}
Let $\epsilon,\epsilon'>0$. We employ the standard Lasry-Lions method
with the pairs $(u^{\epsilon},m^{\epsilon})$ and $(u^{\epsilon'},m^{\epsilon'})$.
Namely, subtracting the HJ and continuity equations from each other,
respectively, yields a new system for $u^{\epsilon}-u^{\epsilon'}$
and $m^{\epsilon}-m^{\epsilon'}$. Multiplying the equations satisfied
by $u^{\epsilon}-u^{\epsilon'}$ and $m^{\epsilon}-m^{\epsilon'}$,
respectively, by $m^{\epsilon}-m^{\epsilon'}$ and $u^{\epsilon}-u^{\epsilon'}$,
and then integrating over $Q_{T}$, leads to the identity
\[
M_{\epsilon,\epsilon'}+M_{\epsilon',\epsilon}+M_{g}+M_{f}+K_{\epsilon,\epsilon'}^{-}+K_{\epsilon',\epsilon}^{-}=K+K_{\epsilon,\epsilon'}^{+}+K_{\epsilon',\epsilon}^{+},
\]
where
\[
M_{\epsilon,\epsilon'}=\intQ m^{\epsilon}(H^{\epsilon'}-H^{\epsilon}-D_{p}H^{\epsilon}\cdot(D_{x}u^{\epsilon'}-D_{x}u^{\epsilon})),
\]
\[
M_{g}=\intO(g(\cdot,m^{\epsilon}(\cdot,T))-g(\cdot,m^{\epsilon'}(\cdot,T))(m^{\epsilon}(\cdot,T)-m^{\epsilon'}(\cdot,T)),\;M_{f}=\intQ(f(\cdot,m^{\epsilon})-f(\cdot,m^{\epsilon'}))(m^{\epsilon}-m^{\epsilon'}),
\]
\[
K_{\epsilon,\epsilon'}^{\pm}=\intQ\epsilon'(\log m^{\epsilon'})^{\pm}m^{\epsilon},\;K=\intQ-(\epsilon'\log m^{\epsilon'}m^{\epsilon'}+\epsilon\log m^{\epsilon}m^{\epsilon}),
\]
and $H^{\epsilon}=H(\cdot,D_{x}u^{\epsilon})$, $D_{p}H^{\epsilon}=D_{p}H(\cdot,D_{x}u^{\epsilon}).$
By (\ref{eq:u epsilon bdd}), and the fact that the map $m\mapsto m\log m$
is bounded below, $K\rightarrow0$ as $\epsilon,\epsilon'\rightarrow0$.
Similarly, (\ref{eq:u epsilon bdd}) implies that the quantity $\log m^{\epsilon'}m^{\epsilon}$
is bounded above, and thus $K_{\epsilon,\epsilon'}^{+}\rightarrow0$
as $\epsilon,\epsilon'\rightarrow0.$ Since, by monotonicity and convexity,
each term on the left-hand side of the equation is non-negative, all
of them converge to zero. Every claim of a.e. convergence in what
follows is tacitly meant to hold after extracting a subsequence. Since
$M_{f}\rightarrow0$, we have $m^{\epsilon}\rightarrow m$ a.e. in
$Q_{T}$. Similarly, since $M_{g}\rightarrow0$, there exists a function
$m_{T}\in L^{\infty}(\mathbb{T}^{d})$ such that $m^{\epsilon}(T)\rightarrow m_{T}$
a.e. in $\mathbb{T}^{d}$. Now, using (\ref{eq:strconv}), one obtains
\[
\frac{1}{2C_{0}}\intQ(m^{\epsilon}+m^{\epsilon'})|D_{x}u^{\epsilon}-D_{x}u^{\epsilon'}|^{2}\leq M_{\epsilon,\epsilon'}+M_{\epsilon',\epsilon},
\]
which implies (\ref{eq: Du conv  L^2m}). Finally, sending $\epsilon'\rightarrow0$
first and then $\epsilon\rightarrow0$, using the fact that $K_{\epsilon',\epsilon}^{-}\rightarrow0$,
yields (\ref{eq:epsilon log m conv}).

Next we show the continuity properties of $m$, which in particular
give a meaning to the expression $m(\cdot,T)$. Integrating the equation
for $m^{\epsilon}-m^{\epsilon'}$ over a cylinder $\mathbb{T}^{d}\times[0,t]$
against a test function $\phi\in C^{\infty}(\mathbb{T}^{d})$ yields
\begin{align*}
\intO(m^{\epsilon}-m^{\epsilon'})\phi & =-\int_{0}^{t}\int_{\mathbb{T}^{d}}(m^{\epsilon}D_{p}H(\cdot,D_{x}u^{\epsilon})-m^{\epsilon'}D_{p}H(\cdot,D_{x}u^{\epsilon'}))\cdot D\phi,
\end{align*}
which implies
\[
\sup_{t\in[0,T]}||m^{\epsilon}(\cdot,t)-m^{\epsilon'}(\cdot,t)||_{H^{-1}(\mathbb{T}^{d})}\leq\sqrt{T}||m^{\epsilon}D_{p}H(\cdot,D_{x}u^{\epsilon})-m^{\epsilon'}D_{p}H(\cdot,D_{x}u^{\epsilon'})||_{L^{2}(Q_{T})}.
\]
Consequently, (\ref{eq: H>=00003D|p|^2-C}) and (\ref{eq: Du conv  L^2m})
together imply that $m^{\epsilon}\rightarrow m$ in $C^{0}([0,T],H^{-1}(\mathbb{T}^{d}))$.
Similarly, testing $\phi$ against the continuity equation for $m^{\epsilon}$
in a cylinder $\mathbb{T}^{d}\times[s,t]$ yields
\[
\intO(m^{\epsilon}(x,t)-m^{\epsilon}(x,s))\phi(x)dx=-\int_{s}^{t}\int_{\mathbb{T}^{d}}m^{\epsilon}D_{p}H(x,D_{x}u^{\epsilon})\cdot D\phi(x)dxdt,
\]
thus 
\[
||m^{\epsilon}(\cdot,t)-m^{\epsilon}(\cdot,s)||_{H^{-1}(\mathbb{T}^{d})}\leq\sqrt{t-s}||m^{\epsilon}D_{p}H(\cdot,D_{x}u^{\epsilon})||_{L^{2}(Q_{T})},
\]
and sending $\epsilon\rightarrow0$ produces the desired Hölder estimate.

Since $m^{\epsilon}(\cdot,T)\rightarrow m_{T}$ a.e. in $\mathbb{T}^{d}$,
and $m^{\epsilon}(\cdot,T)\rightarrow m(\cdot,T)$ in $H^{-1}(\mathbb{T}^{d})$,
it follows that $m_{T}=m(\cdot,T)$, and that $u^{\epsilon}(\cdot,T)=g(\cdot,m^{\epsilon}(\cdot,T))\rightarrow g(\cdot,m(\cdot,T))$
almost everywhere. Moreover, (\ref{eq:u conv in BV}) implies that
$u^{\epsilon}(\cdot,T)\weak u(\cdot,T)\text{ in }C^{\infty}(\mathbb{T}^{d})^{*}$,
and therefore $u(\cdot,T)=g(\cdot,m(\cdot,T))$.

It remains to show (\ref{eq:weak identity for all s}). For this purpose,
performing once more the Lasry--Lions computation, this time on the
system for $(u^{\epsilon},m^{\epsilon})$, and integrating on $\mathbb{T}^{d}\times[s,T]$
yields
\[
\int_{s}^{T}\intO m^{\epsilon}(x,t)(H^{\epsilon}-D_{p}H^{\epsilon}\cdot D_{x}u^{\epsilon}-f(x,m^{\epsilon})-\epsilon\log m^{\epsilon})dxdt=\int_{\mathbb{T}^{d}}(m^{\epsilon}(x,T)g(x,m^{\epsilon}(x,T))-m^{\epsilon}(x,s)u^{\epsilon}(x,s))dx,
\]
By Fubini's theorem, for a.e. $s\in[0,T]$, $u^{\epsilon}(\cdot,s)$
and $m^{\epsilon}(\cdot,s)$ converge, respectively, to $u(\cdot,s)\in L^{\infty}(\mathbb{T}^{d})$
and $m(\cdot,s)\in L^{\infty}(\mathbb{T}^{d})$ a.e. in $\mathbb{T}^{d}$.
Thus, for such $s$, using (\ref{eq: Du conv  L^2m}), one obtains
(\ref{eq:weak identity for all s}) after letting $\epsilon\rightarrow0$.
When $s=0$, $m^{\epsilon}(s)=m_{0}$, and thus the $C^{\infty}(\mathbb{T}^{d})^{*}$--convergence
of $u^{\epsilon}(0)$ is sufficient to conclude.
\end{proof}
We now prove the main result for the degenerate elliptic problem,
Theorem \ref{thm:weaksols}. 
\begin{proof}[Proof of Theorem \ref{thm:weaksols}]
First we will establish that $(u,m)$ is indeed a weak solution.
By Lemmas \ref{lem: weak Linfty estimates} and \ref{lem:convergence lemma weak},
$(u,m)$ satisfies condition (i) of Definition \ref{def:weaksol def}.
Next, by the HJ equation for $u^{\epsilon},$
\[
-u_{t}^{\epsilon}+H^{\epsilon}=f(\cdot,m^{\epsilon})+\epsilon\log m^{\epsilon}\leq f(\cdot,m^{\epsilon})+\epsilon(\log m^{\epsilon})^{+}.
\]
Integration against a non-negative function $\phi\in C^{\infty}(\overline{Q_{T}})$
then yields
\begin{equation}
\intO(-u^{\epsilon}(\cdot,T)\phi(\cdot,T)+u^{\epsilon}(\cdot,0)\phi(\cdot,0))+\intQ u^{\epsilon}\phi_{t}+\intQ H^{\epsilon}\phi\leq\intQ f(\cdot,m^{\epsilon})\phi+\epsilon(\log m^{\epsilon})^{+}\phi.\label{eq:inter weakHJ}
\end{equation}
Using the convexity of $H$,
\[
\intQ H(\cdot,D_{x}u)\phi+D_{p}H(\cdot,D_{x}u)\cdot(D_{x}u^{\epsilon}-D_{x}u)\phi\leq\intQ H(\cdot,D_{x}u^{\epsilon})\phi,
\]
and therefore, since $D_{x}u^{\epsilon}\rightharpoonup D_{x}u$ in
$L^{2}(Q_{T})$,
\[
\intQ H\phi\leq\liminf_{\epsilon\rightarrow0^{+}}\intQ H^{\epsilon}\phi.
\]
Letting $\epsilon\rightarrow0$ in (\ref{eq:inter weakHJ}), Lemmas
\ref{lem: weak Linfty estimates} and \ref{lem:convergence lemma weak}
and (\ref{eq:u conv in BV}) thus yield
\[
\intO(-u(\cdot,T)\phi(\cdot,T)+u(\cdot,0)\phi(\cdot,0))+\intQ u\phi_{t}+\intQ H\phi\leq\intQ f\phi.
\]
This completes the proof of condition (ii) in Definition \ref{def:weaksol def}.
The third condition follows immediately by testing the continuity
equation of $m^{\epsilon}$ against an arbitrary $\phi\in C^{\infty}(\overline{Q_{T}})$,
and appealing to Lemma \ref{lem:convergence lemma weak} to let $\epsilon\rightarrow0$
in the equality
\[
\intO(m^{\epsilon}(\cdot,T)\phi(\cdot,T)-m_{0}\phi(\cdot,0))-\intQ m^{\epsilon}\phi_{t}+\intQ m^{\epsilon}D_{p}H(\cdot,D_{x}u^{\epsilon})\cdot D_{x}\phi=0.
\]
Finally, condition (iv) of Definition \ref{def:weaksol def} has already
been established in Lemma \ref{lem:convergence lemma weak}.

Next is the proof of uniqueness. Let $(u',m')$ be another weak solution
to (\ref{eq:mfg}). By the fact that every non-negative distribution
can be identified with a non-negative measure, the HJ inequality for
$u'$ may be written as
\[
-u'_{t}+H(\cdot,D_{x}u')+\mu=f(\cdot,m'),
\]
where $\mu\in C(\overline{Q_{T}})^{*}$ is a non-negative, finite
measure on $\overline{Q_{T}}$. We carefully apply the Lasry-Lions
procedure to the $(u^{\epsilon},m^{\epsilon})$ and $(u',m')$ systems.
Set $v^{\epsilon}=u^{\epsilon}-u'$, $w^{\epsilon}=m^{\epsilon}-m'.$
Subtracting the two corresponding HJ equations for $u',u^{\epsilon}$,
and integrating against $m^{\epsilon}$, we obtain
\begin{equation}
\intQ(v_{t}^{\epsilon}m^{\epsilon}+(H'-H^{\epsilon})m^{\epsilon})+\intQ m^{\epsilon}d\mu=\intQ m^{\epsilon}(f'-f^{\epsilon}),\label{eq:1 weak}
\end{equation}
where $H'=H(\cdot,D_{x}u')$, $f'=f(\cdot,m')$. Integrating the HJ
equation for $u^{\epsilon}$ against $m'$ gives
\[
\intQ(-u_{t}^{\epsilon}m'+H^{\epsilon}m')=\intQ m'f^{\epsilon}.
\]
Now, subtracting the continuity equations for $m^{\epsilon},m'$ and
testing against $u^{\epsilon}$ yields
\begin{equation}
\intQ(-u_{t}^{\epsilon}m^{\epsilon}+u_{t}^{\epsilon}m')+(m^{\epsilon}D_{p}H^{\epsilon}-m'D_{p}H')\cdot D_{x}u^{\epsilon}=-\intO v^{\epsilon}(T)m'(T),\label{eq:2 weak}
\end{equation}
and integrating the continuity equation for $m^{\epsilon}$ against
$-u'$ we get
\begin{equation}
\intQ(m^{\epsilon}u'_{t}-m^{\epsilon}D_{p}H^{\epsilon}D_{x}u')=\intO(m^{\epsilon}(T)u'(T)-m_{0}u'(0)).\label{eq:3 weak}
\end{equation}
Finally, by Condition (iv) in Definition \ref{def:weaksol def} and
the fact that $(u',m')$ is a weak solution,

\begin{equation}
\intQ-m'(H'-D_{p}H'\cdot D_{x}u')=-\intQ m'f'-\int_{\mathbb{T}^{d}}(m'(T)u'(T)-m_{0}u'(0)).\label{eq:4 weak}
\end{equation}
Adding (\ref{eq:1 weak}), (\ref{eq:2 weak}), (\ref{eq:3 weak}),
and (\ref{eq:4 weak}) yields
\begin{equation}
M_{\epsilon,1}+M_{\epsilon,2}+M_{g,\epsilon}+M_{f,\epsilon}+K_{\epsilon}^{-}+K_{1,\epsilon}=K_{2,\epsilon}+K_{\epsilon}^{+},\label{eq: final weak}
\end{equation}
where
\[
M_{\epsilon,1}=\intQ m^{\epsilon}(H'-H^{\epsilon}-D_{p}H'\cdot(D_{x}u^{\epsilon}-D_{x}u')),\;M_{\epsilon,2}=\intQ m'(H^{\epsilon}-H'-D_{p}H^{\epsilon}\cdot(D_{x}u'-D_{x}u^{\epsilon})),
\]
\[
M_{g,\epsilon}=\intO(m^{\epsilon}(\cdot,T)-m'(\cdot,T))(g(\cdot,m^{\epsilon}(\cdot,T))-g(\cdot,m'(\cdot,T))),\;M_{f,\epsilon}=\intQ(f(\cdot,m^{\epsilon})-f(\cdot,m'))(m^{\epsilon}-m'),
\]
\[
K_{\epsilon}^{\pm}=\intQ\epsilon(\log m^{\epsilon})^{\pm}m',\;K_{1,\epsilon}=\intQ m^{\epsilon}d\mu,\;K_{2,\epsilon}=-\intQ\epsilon\log m^{\epsilon}m^{\epsilon}.
\]
The only new term relative to the proof of Lemma \ref{lem:convergence lemma weak}
is $K_{1,\epsilon}$, which is clearly non-negative. Thus, as before,
each individual term on the left-hand side of (\ref{eq: final weak})
converges to zero as $\epsilon\rightarrow0$. In particular, since
$M_{f,\epsilon}\rightarrow0$, it follows that $m=m'$ a.e.. A posteriori,
since $M_{\epsilon,2}\rightarrow0$, (\ref{eq: Du conv  L^2m}) and
the strict convexity of $H$ imply that $D_{x}u=D_{x}u'$ a.e. in
$\{m>0\}.$ Moreover, $M_{g,\epsilon}\rightarrow0$ implies $m(\cdot,T)=m'(\cdot,T)$
a.e., and thus $u(\cdot,T)=g(\cdot,m(\cdot,T))=g(\cdot,m'(\cdot,T))=u'(\cdot,T)$
a.e. in $\mathbb{T}^{d}$.

It remains to show that $u(s)=u'(s)$ a.e. in $\{m(s)>0\}$, for a.e.
$s\in[0,T]$ including the case $s=0$. The function $\overline{u}=\max(u,u')$
is in $\text{{BV}}(Q_{T})\cap L^{\infty}(Q_{T})$, and, since $D_{x}u,D_{x}u'\in L^{2}(Q_{T})$,
the chain rule yields $D_{x}\overline{u}=D_{x}u$ a.e. in $\{u\geq u'\}$
and $D_{x}\overline{u}=D_{x}u'$ a.e. in $\{u\leq u'\}$. In particular,
$D_{x}\overline{u}=D_{x}u$ a.e. in $\{m>0\}$. Following \cite[Thm 5.2]{CardaliaguetGraberPorrettaTonon},
through a mollification procedure, the theory of viscosity solutions
implies that $\overline{u}$ is a distributional subsolution to the
HJ equation. Therefore, testing the HJ inequality of $\overline{u}$
against $m^{\epsilon}$ in an interval $[s,T]$,
\begin{align*}
\int\overline{u}(s)m^{\epsilon}(s)\leq & \intO\overline{u}(T)m^{\epsilon}(T)+\int_{s}^{T}\intO(-m_{t}^{\epsilon}\overline{u}-m^{\epsilon}H(x,D_{x}\overline{u}))+m^{\epsilon}f(x,m)dxdt\\
= & \intO u(T)m^{\epsilon}(T)+\int_{s}^{T}\intO(-\text{div}(m^{\epsilon}D_{p}H^{\text{\ensuremath{\epsilon}}})\overline{u}-m^{\epsilon}H(x,D_{x}\overline{u})+m^{\epsilon}f(x,m))dxdt\\
= & \intO u(T)m^{\epsilon}(T)+\int_{s}^{T}\intO m^{\epsilon}(D_{p}H^{\epsilon}\cdot D_{x}\overline{u}-H(x,D_{x}\overline{u})+f(x,m))dxdt.
\end{align*}
Using Lemma \ref{lem:convergence lemma weak} and the dominated convergence
theorem to let $\epsilon\rightarrow0$, 
\begin{align*}
\intO\overline{u}(s)m(s) & \leq\intO u(T)m(T)+\int_{s}^{T}\intO m(D_{p}H\cdot D_{x}\overline{u}-H(x,D_{x}\overline{u})+f(x,m))dxdt\\
 & =\intO u(T)m(T)+\int_{s}^{T}\intO m(D_{p}H\cdot D_{x}u-H(x,D_{x}u)+f(x,m))dxdt\\
 & =\intO u(s)m(s)
\end{align*}
for a.e. $s\in[0,T]$, including $s=0$, where (\ref{eq:weak identity for all s})
was used in the last equality. Given that $\overline{u}\geq u$, this
implies $\overline{u}(s)=u(s)$ a.e. in $\{m(s)>0\}$ for such $s$.
At last, applying the Lasry-Lions method to the system for $(u^{\epsilon}-u',$
$m^{\epsilon}$), over an interval $[s,T]$, yields
\begin{align*}
\int_{s}^{T}\int_{\mathbb{T}^{d}}\left((H'-H^{\epsilon})m^{\epsilon}+m^{\epsilon}D_{p}H^{\epsilon}\cdot(D_{x}u^{\epsilon}-D_{x}u')\right)+\int_{s}^{T}\intO m^{\epsilon}d\mu\\
+\int_{s}^{T}\intO m^{\epsilon}(f^{\epsilon}-f')+\intO\left(m^{\epsilon}(T)v^{\epsilon}(T)-m^{\epsilon}(s)v^{\epsilon}(s)\right)=0.
\end{align*}
Since $\int_{s}^{T}\intO m^{\epsilon}d\mu\leq\intQ m^{\epsilon}d\mu=K_{1}\rightarrow0$,
sending $\epsilon\rightarrow0$ results in\vspace*{-0.1cm}
\[
\intO m(T)(u(T)-u(T))-m(s)(u(s)-u'(s))=0,
\]
that is,\vspace*{-0.15cm}
\[
\intO m(s)u(s)=\intO m(s)u'(s).
\]
By the fact that $u(s)=\overline{u}(s)\geq u'(s)$ a.e. in $\{m(s)>0\}$,
one then concludes that $u=u'$ a.e. in $\{m>0\}$ and, since $m_{0}>0$,
we have $u(0)=u'(0$) a.e. in $\mathbb{T}^{d}.$ 
\end{proof}
Next, we prove that when the data is independent of the space variable,
the solution $u$ is Lipschitz continuous.
\begin{thm}
\label{thm:Lip degenerate}Assume that (\ref{eq:DE}) holds, and let
$H$, $f$, and $g$ be independent of $x$. Then the sequence $u^{\epsilon}$
is uniformly bounded in $C^{1}(\overline{Q_{T}})$ as $\epsilon\rightarrow0$.
In particular, the weak solution $u=\lim_{\epsilon\rightarrow0}u^{\epsilon}$
and the terminal condition $m(\cdot,T)$ are globally Lipschitz continuous.
\end{thm}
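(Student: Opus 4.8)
The plan is to exhibit the uniform $C^{1}$ bound directly from the a priori estimates of Section~\ref{sec:A-priori-estimates}, exploiting that the perturbed coupling $f^{\epsilon}=f+\epsilon\log(\cdot)$ is again independent of $x$, so that $(\ref{eq:MFG epsilon})$ is a strictly elliptic MFG system with $x$-independent data to which the sharper estimates of Remark~\ref{rem:remark u, m(T) bound} apply. These give, for every $\epsilon\le 1$,
\[
g(\min m_{0})+(f^{\epsilon}(\min m_{0})-H(0))(T-t)\le u^{\epsilon}\le g(\max m_{0})+(f^{\epsilon}(\max m_{0})-H(0))(T-t),\qquad \min m_{0}\le m^{\epsilon}(\cdot,T)\le\max m_{0}.
\]
Since $m_{0}$ is a fixed positive density, $f^{\epsilon}$ is uniformly bounded on $[\min m_{0},\max m_{0}]$, so this yields a uniform bound $\|u^{\epsilon}\|_{C^{0}(\overline{Q_{T}})}\le L$ together with a two-sided bound on $m^{\epsilon}$ along $\partial Q_{T}$ that stays bounded away from $0$ \emph{independently of} $\epsilon$.

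It then suffices to re-run the proof of Lemma~\ref{lem: gradient a priori bound} for $(\ref{eq:MFG epsilon})$ and to check that all constants may be taken independent of $\epsilon$. The difficulty is that the ellipticity constant of $(\ref{eq:MFG epsilon})$ degenerates: $\chi^{\epsilon}=mf_{m}+\epsilon$, so $1/\chi^{\epsilon}$ and $h^{\epsilon}_{w}$ are only bounded by $1/\epsilon$ and $1/\sqrt{\epsilon}$ on their whole range, and a naive application of Lemma~\ref{lem: gradient a priori bound} produces constants that blow up. The crucial point is that, since the data is $x$-independent, the first-order coefficient $b^{\epsilon}$ and all the $x$-derivatives $A^{\epsilon}_{x_{i}}$ vanish, and with them every term in Lemmas~\ref{lem: L(v1) and L(v2)} and~\ref{lem: gradient a priori bound} in which $1/\chi^{\epsilon}$ enters (those coming from $D_{x}f$, $D_{x}H$, $D^{2}_{xm}f$, $D^{2}_{xp}H$, and in particular from the term $\tfrac{1}{f_{m}}D_{x}f_{m}\cdot D_{p}H$ responsible for the $1/\chi$-dependence in $(\ref{eq: bound J3})$). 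The only surviving term carrying the ellipticity constant is the $\chi^{\epsilon}_{w}$-term in $J_{2}$, which through the identity $\chi^{\epsilon}_{w}=2\sqrt{\chi^{\epsilon}}\,h^{\epsilon}_{w}$ and Cauchy--Schwarz is controlled purely by $|h^{\epsilon}_{w}|$ at the point of evaluation, and which in Case~3 is evaluated only at the single interior maximum point $(x_{0},t_{0})$ of $v=H(D_{x}u^{\epsilon})+\tfrac{c_{1}}{2}\tilde u^{2}$. (That $(\ref{eq:f polynomial growth})$, $(\ref{growth chi})$, $(\ref{eq: fmx bound})$, $(\ref{eq:fx, fxx bound})$, $(\ref{eq: old C1})$ and $|\chi^{\epsilon}_{w}|\le C_{0}$ hold for $f^{\epsilon}$ uniformly in $\epsilon$ is already recorded in the proof of Lemma~\ref{lem: weak Linfty estimates}.) Along $\{t=0\}$ and $\{t=T\}$ one argues exactly as in Cases~1 and~2 of Lemma~\ref{lem: gradient a priori bound}, the constants being uniform because $m^{\epsilon}(\cdot,0)=m_{0}$ and $m^{\epsilon}(\cdot,T)\in[\min m_{0},\max m_{0}]$ force $f^{\epsilon}_{m}/g_{m}$ to be bounded above and below by positive $\epsilon$-independent constants there.

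The heart of the argument is Case~3. Exactly as in Lemma~\ref{lem: gradient a priori bound}, comparing $v$ at $(x_{0},t_{0})$ with the point of $\partial Q_{T}$ where $u^{\epsilon}_{t}$ is maximal gives, for every $c_{1}\in(0,1]$,
\[
w^{\epsilon}:=f^{\epsilon}(m^{\epsilon}(x_{0},t_{0}))\ \ge\ f^{\epsilon}(\min m_{0})-\tfrac{c_{1}}{2}\|\tilde u^{\epsilon}\|^{2}_{C^{0}(\overline{Q_{T}})}.
\]
Now $f$ is strictly increasing and $m_{0}>0$, so $f(\min m_{0})>f(0)$; as $\|\tilde u^{\epsilon}\|_{C^{0}(\overline{Q_{T}})}$ is bounded uniformly by the first step, one can fix a single $\epsilon$-independent $c_{1}$ so small that $w^{\epsilon}\ge f(0)+\delta_{0}$ for a fixed $\delta_{0}>0$ and all small $\epsilon$. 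The structure of $f^{\epsilon}$ (namely that $f^{\epsilon}(m)=f(0)+\delta_{0}$ forces $m$ to stay bounded away from $0$ as $\epsilon\to0$, because $\epsilon\log m<0$ cannot compensate $f(m)-f(0)>0$) then gives $m^{\epsilon}(x_{0},t_{0})\ge c_{0}>0$, hence $\chi^{\epsilon}(x_{0},t_{0})=m^{\epsilon}(x_{0},t_{0})f_{m}(m^{\epsilon}(x_{0},t_{0}))+\epsilon\ge c_{1}^{\ast}>0$ and $|h^{\epsilon}_{w}(x_{0},t_{0})|\le C$, all independent of $\epsilon$. With these uniform bounds the ``constant'' $C(x_{0},t_{0})$ of Lemma~\ref{lem: L(v1) and L(v2)} is uniform, so after possibly replacing $c_{1}$ by a further $\epsilon$-independent multiple (both thresholds being uniform, so no circularity arises) the absorption argument of Lemma~\ref{lem: gradient a priori bound} runs verbatim and gives $\|H(D_{x}u^{\epsilon})\|_{C^{0}(\overline{Q_{T}})}\le C$; combined with Corollary~\ref{cor:utbound} this yields $\|u^{\epsilon}\|_{C^{1}(\overline{Q_{T}})}\le C$ uniformly. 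This step — keeping the gradient estimate uniform despite the degeneracy of $\chi^{\epsilon}$ — is the main obstacle, and its resolution is precisely the observation that the $\epsilon$-independent positivity of the boundary density $m_{0}$ pins the Bernstein maximum point to a region where $\chi^{\epsilon}$ stays uniformly positive.

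Finally, Arzel\`a--Ascoli gives a subsequence with $u^{\epsilon}\to u$ in $C^{0}(\overline{Q_{T}})$, the limit being the weak solution identified in Lemma~\ref{lem:convergence lemma weak}, and $u\in C^{0,1}(\overline{Q_{T}})$ because the Lipschitz constant is lower semicontinuous under uniform convergence. For the terminal density, differentiating $u^{\epsilon}(\cdot,T)=g(m^{\epsilon}(\cdot,T))$ gives $D_{x}m^{\epsilon}(\cdot,T)=g_{m}(m^{\epsilon}(\cdot,T))^{-1}D_{x}u^{\epsilon}(\cdot,T)$, which is uniformly bounded since $g_{m}$ is bounded below by a positive constant on the compact set $[\min m_{0},\max m_{0}]$; hence $m^{\epsilon}(\cdot,T)$ is uniformly Lipschitz and, since $m^{\epsilon}(\cdot,T)\to m(\cdot,T)$ a.e.\ by Lemma~\ref{lem:convergence lemma weak}, the limit $m(\cdot,T)$ is globally Lipschitz as well.
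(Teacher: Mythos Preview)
Your proof is correct and follows essentially the same route as the paper's: both invoke Remark~\ref{rem:remark u, m(T) bound} for the uniform $C^{0}$ bound and the two-sided bound $\min m_{0}\le m^{\epsilon}(\cdot,T)\le\max m_{0}$, and both then re-run the Bernstein argument of Lemma~\ref{lem: gradient a priori bound}, the key observation being that at the interior maximum of $H(D_{x}u^{\epsilon})+\tfrac{c_{1}}{2}\tilde u^{2}$ one has $f^{\epsilon}(m^{\epsilon}(x_{0},t_{0}))\ge f^{\epsilon}(\min m_{0})-\tfrac{c_{1}}{2}\|\tilde u^{\epsilon}\|^{2}$, which for small $c_{1}$ and $\epsilon$ stays strictly above $f(0)$ and hence forces $\chi^{\epsilon}$, $h^{\epsilon}_{w}$ to be uniformly controlled there. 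Your additional remarks---that $x$-independence kills the $1/\chi^{\epsilon}$ terms coming from $D_{x}f$, $D^{2}_{xp}H$, etc., and the explicit computation of $D_{x}m^{\epsilon}(\cdot,T)$ via $g_{m}^{-1}$---are correct refinements the paper leaves implicit, but the core argument is the same.
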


\begin{proof}
There is no loss of generality in assuming $f(0)<-1$. By Remark \ref{rem:remark u, m(T) bound},
\begin{equation}
\min m_{0}\leq m^{\epsilon}(\cdot,T)\leq\max m_{0}.\label{eq:Thm 5.5 density bounds}
\end{equation}
 We set, for $K>0$,
\[
\chi^{\epsilon}(w)=(f^{\epsilon})^{-1}(w)\cdot f_{m}^{\epsilon}((f^{\epsilon})^{-1}(w)),\;h^{\epsilon}=\sqrt{\chi^{\epsilon}},
\]
\[
\beta_{K}^{\epsilon}=||f^{\epsilon}||_{C^{1}([\frac{1}{K},K])}+||g||_{C^{1}([\frac{1}{K},K])}+\left\Vert \frac{1}{\chi^{\epsilon}}\right\Vert _{C^{0}([-K,\infty))}+||h_{w}^{\epsilon}||_{C^{0}([-K,\infty))}.
\]
Then, in view of (\ref{eq:DE}), (\ref{eq:Thm 5.5 density bounds}),
and Lemma \ref{lem: gradient a priori bound}, there exist constants
$C$ and $C_{1}$, with $C=C(C_{1},\beta_{C_{1}}^{\epsilon})$ and
$C_{1}=C_{1}(C_{0},T,T^{-1},(1-\tau)^{-1},||u^{\epsilon}||_{C^{0}(\QT)})$,
such that
\[
||Du^{\epsilon}||_{C^{0}(\QT)}\leq C.
\]
The only issue here is that the quantities
\[
K_{1,\epsilon}(-C_{1})=\left\Vert \frac{1}{\chi^{\epsilon}}\right\Vert _{C^{0}([-C_{1},\infty))}\text{ and \;\;\;}K_{2,\epsilon}(-C_{1})=||h_{w}^{\epsilon}||_{C^{0}([-C_{1},\infty))}
\]
may not be bounded independently of $\epsilon$. However, the proof
of Lemma \ref{lem: gradient a priori bound} shows that, defining
$\utilde^{\epsilon}$ as in said proof, the gradient bound depends
only on $K_{1,\epsilon}(a)$ and $K_{2,\epsilon}(a)$, where $a\in\mathbb{R}$
is any number satisfying the following condition: for all small enough
$0\leq c_{1}<1,$ at any maximum point $(x_{0},t_{0})$ of $H(D_{x}u^{\epsilon})+\frac{c_{1}}{2}(\utilde^{\epsilon})^{2},$
the inequality $f^{\epsilon}(m(x_{0},t_{0}))\geq a$ holds. At such
a point $(x_{0},t_{0})$, for small enough $\epsilon$ and $c_{1}$,
Corollary \ref{cor:utbound} yields
\begin{multline*}
f^{\epsilon}(m(x_{0},t_{0}))=-u_{t}^{\epsilon}+H(D_{x}u^{\epsilon})\geq f^{\epsilon}(\min m_{0})-||H(D_{x}u^{\epsilon})||_{C^{0}}+H(D_{x}u^{\epsilon})\\
\geq f^{\epsilon}(\min m_{0})-\frac{c_{1}}{2}||\utilde^{\epsilon}||_{C^{0}}^{2}=f(\min m_{0})+\epsilon\log\min m_{0}-c_{1}||\utilde^{\epsilon}||_{C^{0}}^{2}>f(\frac{1}{2}\min m_{0}).
\end{multline*}
Thus, the condition is satisfied by $a=f(\frac{1}{2}\min m_{0})$.
Since $a>f(0),$ it follows from (\ref{eq:f polynomial growth}),
(\ref{growth chi}) that $K_{1,\epsilon}(a)$ and $K_{2,\epsilon}(a)$
are bounded uniformly as $\epsilon\rightarrow0$. The Arzelà--Ascoli
Theorem implies the result.
\end{proof}
We finally note that, in the case $d=1$, since there exists an a
priori lower bound for the density $m$ in terms of its boundary values
(obtained in \cite{BakaryanFerreiraGomes,Gomes,LavenantSantambrogio}),
the solutions are seen to be smooth. However, when $d>1$, even in
the special case of Theorem \ref{thm:Lip degenerate}, where an a
priori bound for the gradient was obtained, we do not know whether
the solution to the degenerate elliptic problem enjoys higher regularity. 

\subsection*{Acknowledgements}

The author was partially supported by P.E. Souganidis's National Science
Foundation grant DMS-1900599, the Office for Naval Research grant
N000141712095 and the Air Force Office for Scientific Research grant
FA9550-18-1-0494. The author would like to thank P.E. Souganidis for
valuable discussions, comments, and suggestions. He also thanks the
anonymous referees for their careful reading and help in improving
and clarifying the manuscript. 

{{\footnotesize   \bigskip   \footnotesize   \textsc{Department of Mathematics, University of Chicago, Illinois, 60637, USA}\\ \nopagebreak \textit{E-mail address}: \texttt{sbstn@math.uchicago.edu}}}
\end{document}